\numberwithin{equation}{section}
\newtheorem{theorem}{Theorem}
\newtheorem{proposition}[theorem]{Proposition}
\newtheorem{corollary}[theorem]{Corollary}
\newtheorem{conjecture}[theorem]{Conjecture}
\newtheorem{lemma}[theorem]{Lemma}
\newtheorem{problem}[theorem]{Problem}
\newtheorem{definition}[theorem]{Definition}
\newtheorem{example}[theorem]{Example}
\newtheorem{remark}[theorem]{Remark}
\newcommand{\A}{\mathbb{A}}
\newcommand{\R}{\mathbb{R}}
\newcommand{\C}{\mathbb{C}}
\newcommand{\N}{\mathbb{N}}
\newcommand{\PP}{\mathbb{P}}
\newcommand{\RP}{\mathbb{P}_{\mathbb{R}}}
\newcommand{\cV}{\mathcal{V}}
\newcommand{\cO}{\mathcal{O}}
\newcommand{\cE}{\mathcal{E}}
\newcommand{\Vector}[1]{\mathbf{#1}}
\newcommand{\x}{\mathbf{x}}
\newcommand{\X}{\mathbf{X}}
\newcommand{\z}{\mathbf{z}}
\newcommand{\bV}{\mathbf{V}}
\newcommand{\dd}{\mathrm{\normalfont{d}}}
\newcommand{\New}{\mathrm{\normalfont{New}}}
\newcommand{\rr}{{\mathbb R}}
\newcommand{\al}{\alpha}
\newcommand{\be}{\beta}
\newcommand{\ep}{\varepsilon}
\title{
  On odd powers of nonnegative polynomials \\ that are not sums of squares}
\author[G. Blekherman]{Grigoriy Blekherman}
\address{School of Mathematics, Georgia Institute of Technology, 686 Cherry Street Atlanta, GA 30332, USA}
\email{greg@math.gatech.edu}
\author[K. Kozhasov]{Khazhgali Kozhasov}
\address{Laboratoire J.-A. Dieudonn\'e \\ Universit\'e C\^ote d'Azur \\ Parc Valrose, 06108 Nice, France}
\email{khazhgali.kozhasov@univ-cotedazur.fr}
\author[B. Reznick]{Bruce Reznick}
\address{Department of Mathematics, University of Illinois at Urbana-Champaign, Urbana, IL 61801}
\email{reznick@illinois.edu}
\date{}
\begin{document}

\begin{abstract}
  We initiate a systematic study of nonnegative polynomials $P$ such that $P^k$ is not a sum of squares for any odd $k\geq 1$, calling such $P$ \emph{stubborn}. We develop a new invariant of a real isolated zero of a nonnegative polynomial in the plane, that we call \emph{the SOS-invariant}, and relate it to the well-known delta invariant of a plane curve singularity. Using the SOS-invariant we show that any polynomial that spans an extreme ray of the convex cone of nonnegative ternary forms of degree 6 is stubborn. We also show how to use the SOS-invariant to prove stubbornness of ternary forms in higher degree. Furthermore, we prove that in a given degree and number of variables, nonnegative polynomials that are not stubborn form a convex cone, whose interior consists of all strictly positive polynomials.

\end{abstract}

\maketitle

\section{Introduction}





The relationship between nonnegative polynomials and sums of squares is a fundamental problem in real algebraic geometry. Much is now known about constructions and existence of nonnegative polynomials that are not sums of squares (of polynomials) \cite{CL2, Reznick1989FormsDF, Reznick2007OnHC, Blekherman2010NonnegativePA, Blekherman2015, Brugall2018RealAC}. Decomposing $P^k$ as a sum of squares, where $k$ is an odd integer, also provides a certificate of nonnegativity of $P$, and it is reasonable to ask whether this works for all nonnegative polynomials $P$. 
We initiate a systematic study of nonnegative polynomials $P$ such that $P^k$ is not a sum of squares for any odd positive integer $k$. We call such polynomials \emph{stubborn}. Currently not much is known about stubborn polynomials, except for some isolated examples. One of our main results is that all extreme rays of the convex cone of nonnegative ternary sextics (homogeneous polynomials in $3$ variables of degree $6$) are stubborn. More generally, for a nonnegative ternary form $P$ we develop a new invariant $\delta^{\,\textrm{sos}}$ of real singularities of $P$, which we call \emph{a sum of squares invariant} or \emph{SOS-invariant}, such that if the sum of $\delta^{\,\textrm{sos}}$ over all real singularities of $P$ is sufficiently large, then $P$ is stubborn. This implies the result for ternary sextics, and allows us to construct stubborn forms in higher degrees. We compare the SOS-invariant to the classical delta invariant of a plane curve  singularity, and show that they agree for singularities of multiplicity $2$, but are not equal in general. We show that stubborn forms exist, for a fixed degree and number of variables, whenever nonnegative forms are not equal to sums of squares. We also prove that the set of nonnegative forms that are not stubborn is a convex cone, which includes the interior of the cone of nonnegative forms. We now go into more details and review the history of the problem.

For positive integers $n, d$, let $F_{n,d}$ denote the space of real forms (homogeneous polynomials) of degree $d$ in $n$ variables. We note that for analyzing questions about nonnegativity and sums of squares it suffices to consider homogeneous polynomials, as homogenization and dehomogenization preserve the properties of being nonnegative and being a sum of squares. From now on we will work with forms.
A form $P\in F_{n,d}$ is said to be \emph{nonnegative} if $P(\X)\geq 0$ for any $\X\in \R^n$.
If $P(\X)>0$ for all $\X\neq \mathbf{0}$, then $P$ is called \emph{strictly positive}.
If $P=H_1^2+\dots+H_r^2$ for some $H_1,\dots, H_r\in F_{n,d/2}$ of degree $d/2$, then $F$ is said to be a \emph{sum of squares}.
Trivially, every sum of squares is nonnegative and the degree $d$ of a nonnegative form must be even.
Following Choi and Lam \cite{CL}, let $P_{n,d}$ and $\Sigma_{n,d}$ denote the closed convex
cones of nonnegative forms and, respectively, sum of squares forms in $F_{n,d}$.
The interior $\textrm{int}(P_{n,d})$ of $P_{n,d}$ consists exactly of strictly positive forms of degree $d$. 
Let $\Delta_{n,d}:=P_{n,d}\setminus \Sigma_{n,d}$ be the difference of the two cones.
Hilbert \cite{Hilbert1888berDD} proved that $\Delta_{n,m} \neq \emptyset$ if and only if  
$n \ge 3$ and $d \ge 6$ or  $n \ge 4$ and $d \ge 4$, see Figure \ref{fig:table}.
\emph{The Motzkin form} $M\in F_{3,6}$ is a ternary sextic
\begin{align}\label{eq:Motzkin}
M\ =\ X_1^4X_2^2 +X_1^2X_2^4 + X_3^6 - 3X_1^2X_2^2X_3^2  
\end{align}
that was the first explicit example of a nonnegative form that is not a sum of squares \cite{Motzkin}.
Another early example of a form in $\Delta_{3,6}$ was
\begin{equation}\label{eq:Robinson}\begin{aligned}
R\ =&\ X_1^6 +X_2^6 + X_3^6 + 3X_1^2X_2^2X_3^2\\ &- (X_1^4X_2^2 + X_1^2X_2^4+X_1^4X_3^2+X_1^2X_3^4+X_2^4X_3^2+ X_2^2X_3^4), 
\end{aligned}\end{equation}
found by Robinson in \cite{Robinson1973}.

Hilbert's 17th Problem asks whether, for $P \in P_{n,d}$, there exists 
 $Q$ in some $F_{n,d'}$ so that $Q^2P \in \Sigma_{n,d+2d'}$. In 1927 Artin  \cite{Artin1927berDZ} solved this problem in the affirmative, even in a more general setting of \emph{real closed fields}. In equivalent terms, any nonnegative form can be written as a sum of squares of rational functions. 
 Later, multiple authors studied variations of Hilbert's 17th problem \cite{polya1928positive, Habicht, Delzell, Aspects}.
 In particular, for a given strictly positive form $Q$ it was desirable to know  whether for all $P\in P_{n,d}$ the form $PQ^k$ is a sum of squares for some large $k\geq 1$.
 Reznick showed \cite{Uniform} that any strictly positive form $P$ multiplied by a large enough power $Q^k$ for $Q=X_1^2+\dots+X_n^2$ is a sum of squares (this does not hold for all nonnegative forms $P$ \cite{Absence}). More generally, Scheiderer proved \cite{Sch} that if $Q \in \textrm{int}(P_{n,d'})$ and $P\in\textrm{int}(P_{n,d})$ are two strictly positive  forms, then $PQ^k\in \Sigma_{n,d+d'k}$ is a sum of squares for all sufficiently large $k$.
Thus, if $P\in \textrm{int}(P_{n,d})$ is strictly positive, then  $P^k\in \Sigma_{n,kd}$ is a sum of squares for all sufficiently large $k$.
In the present work we study this property for non-sum of squares forms $P\in \partial P_{n,d}$ in the boundary of the cone of nonnegative forms. Being a square, an even power $P^{2k}=(P^k)^2\in \Sigma_{n,2kd}$ of $P$ is a sum of squares. We say that $P$ admits an odd sum of squares power, if $P^{2k+1}\in \Sigma_{n,(2k+1)d}$ is a sum of squares for some $k\geq 0$.
Otherwise, the form $P\in \partial P_{n,d}$ will be called \emph{stubborn}.


A nonnegative form $P\in P_{n,d}$ is said to be \emph{extremal}, if it spans an extreme ray of the cone $P_{n,d}$. The set of extremal forms in $P_{n,d}$ is denoted by $\cE(P_{n,d})$. When $P\in \cE(P_{n,d})$ spans an exposed extreme ray, we say that $P$ is \emph{exposed extremal form}.
The Motzkin form \eqref{eq:Motzkin} is an example of an non-exposed extremal form in $P_{3,6}$ (see \cite[p. $8$]{CL2}, \cite[Thm. $5$]{R} and the proof of \cite[Thm. $2$]{blekherman_hauenstein_ottem_ranestad_sturmfels_2012}), while the Robinson form \eqref{eq:Robinson} is an exposed extremal ternary sextic (see \cite[Thm. $3.8$]{CL}).
 In \cite{CL, CL2} Choi and Lam also studied the following ternary sextic and quaternary quartic:  
 \begin{equation}
   \label{eq:S,Q}
   \begin{aligned}
     S\ &=\ X_1^4X_2^2+X_2^4X_3^2+X_3^4X_1^2-3X_1^2X_2^2X_3^2,\\
     Q\ &=\ X_4^4+X_1^2X_2^2+X_1^2X_3^2+X_2^2X_3^2-4X_1X_2X_3X_4.
   \end{aligned}
 \end{equation}
 They showed that $S\in \Delta_{3,6}$, $Q\in \Delta_{4,4}$ are non-sum of squares extremal nonnegative forms.
 In 1979 Stengle \cite{Sten} proved that the ternary sextic 
 \begin{equation}
   \label{eq:Stengle}
   \begin{aligned}
 T\ =\ X_1^3X_3^3 + (X_2^2X_3 - X_1^3 - X_1X_3^2)^2     
   \end{aligned}
 \end{equation}
 is stubborn.
 The paper \cite{Sten} reports that Reznick had proved that $S$ is stubborn by a different argument.
 In 1982, Choi, Dai, Lam and Reznick \cite{CDLR} cited Stengle's example \eqref{eq:Stengle} and claimed the same property for $M$ instead of $S$.
 No proofs for $M$ or $S$ were given at the time.
 In Subsection \ref{sec:Motzkin} we include this earlier proof of the fact that $M$ is stubborn.

 This work was in particular motivated by a query from Jim McEnerney about references for these claimed results.
 In his talk at the Conference on Applied Algebraic Geometry (AG23) held in Eindhoven in July 2023 Reznick posed a conjecture that all extremal forms in $\Delta_{3,6}$ are stubborn.
 In the present work we settle this conjecture.

\begin{theorem}\label{thm:main}
    Let $P\in \cE(P_{3,6})\cap \Delta_{3,6}$ be an extremal nonnegative ternary sextic which is not a sum of squares. Then $P$ is stubborn, i.e. $P^{2k+1}\in \Delta_{3,6(2k+1)}$ is not a sum of squares for $k\geq 0$.
\end{theorem}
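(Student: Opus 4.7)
The plan is to reduce Theorem~\ref{thm:main} to the general SOS-invariant criterion announced in the introduction: a nonnegative ternary form $P \in P_{3,6}$ is stubborn as soon as the sum $\sum_p \delta^{\,\mathrm{sos}}_p(P)$ over its real isolated zeros exceeds an explicit threshold $N=N(3,6)$ depending only on the format. The argument then breaks into two tasks: classify the possible real zero configurations of an extremal $P \in \cE(P_{3,6}) \cap \Delta_{3,6}$, and verify that each such configuration saturates or exceeds the bound~$N$.

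For the classification, I would invoke the classical picture of extremal non-SOS ternary sextics. If $P = \sum H_i^2 \in \Sigma_{3,6}$ with $H_i \in F_{3,3}$, then the real zeros of $P$ are common zeros of the cubics $H_i$; since $\dim F_{3,3} = 10$ and $P \in P_{3,6}$ can vanish at up to $10$ real points (each to even order), being extremal in $\Delta_{3,6}$ is tied to a Cayley--Bacharach-type phenomenon: the real zero set has enough total ``mass'' to match the $10$-dimensional space of cubics, yet fails to impose dependent conditions on it in the way that would be required for an SOS decomposition. This matches the structure of the known extremal examples $M$, $R$, and $S$, where the zero set is either $10$ simple ordinary nodes (Robinson) or fewer but more degenerate real zeros (Motzkin, Choi--Lam).

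For the SOS-invariant step, I would proceed zero-by-zero. At a simple ordinary real double point one has $\delta^{\,\mathrm{sos}}_p = \delta_p = 1$ by the agreement of the two invariants at multiplicity $2$ established earlier in the paper. At higher multiplicity or more degenerate real zeros (cusps, tacnodes, or coalesced nodes arising as limits of ten-node configurations), one bounds $\delta^{\,\mathrm{sos}}_p$ from below using its relationship to the classical delta invariant and, where needed, a direct local computation. Summing the contributions over all real singularities of an extremal $P$ then yields $\sum_p \delta^{\,\mathrm{sos}}_p(P) \geq N(3,6)$, so the general criterion applies and $P^{2k+1} \notin \Sigma_{3,6(2k+1)}$ for every $k\geq 0$.

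The main obstacle is handling the non-generic zero configurations that arise on the boundary of the moduli of extremal sextics: an extremal $P$ may possess fewer than $10$ real zeros but with higher multiplicity or non-nodal analytic type, so one has to verify that the jump in $\delta^{\,\mathrm{sos}}$ at a single degenerate zero exactly compensates for the loss of simpler zeros. Concretely, this reduces to enumerating the real singularity types compatible with extremality in $P_{3,6}$ and, for each, showing $\delta^{\,\mathrm{sos}}_p$ is at least as large as the sum of the SOS-invariants at the nodes into which the singularity deforms. Once this case-by-case bookkeeping is complete, the comparison against $N(3,6)$ is immediate from the classification of step one.
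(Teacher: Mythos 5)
Your high-level instinct is right: the proof does go through an SOS-invariant lower bound compared against a threshold coming from B\'ezout, and you correctly note that $\delta^{\,\mathrm{sos}}$ agrees with $\delta$ at multiplicity-$2$ singularities. But your proposed route to establishing the bound $\sum_p \delta^{\,\mathrm{sos}}_p(P) \geq N(3,6)$ has a genuine gap, and it is not the argument the paper uses.

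You propose to ``enumerate the real singularity types compatible with extremality in $P_{3,6}$'' and check each case. No such classification is given in the paper, and this would be a substantial open-ended task; in particular you would need to rule out or handle zeros of multiplicity $\geq 4$, configurations with complex conjugate singular pairs, and so on. The paper avoids all case analysis with three clean facts. First, Lemma \ref{lem:scroll}: a nonnegative ternary sextic with a real zero of multiplicity $\geq 4$ is automatically a sum of squares (a variety-of-minimal-degree argument), so for $P \in \cE(P_{3,6}) \setminus \Sigma_{3,6}$ \emph{every} real zero has multiplicity exactly $2$ --- the ``cusps, tacnodes, coalesced higher-multiplicity zeros'' you worry about simply do not occur at the order-of-vanishing level, although the multiplicity-$2$ singularity can still be analytically complicated. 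Second, Lemma \ref{lem:delta=delta} then gives $\delta^{\,\mathrm{sos}}_{\X}(P)=\delta_{\X}(P)$ at each such zero. Third, the total delta invariant $\delta(P)=10$ does \emph{not} come from a Cayley--Bacharach count of nodes; it comes from the result of Blekherman--Hauenstein--Ottem--Ranestad--Sturmfels that the sextic curve $\cV(P)$ is rational, together with the genus--degree formula, and a half-degree invariant argument (using extremality) to ensure the entire $\delta$-mass sits at real points. Your sketch also omits the concrete threshold and the contradiction mechanism: the bound is $d^2/4 = 9$, and stubbornness follows because if $P^k = \sum H_i^2$ with $H_1,H$ coprime degree-$3k$ forms, B\'ezout gives $I(H_1,H)=9k^2$ while Propositions \ref{propo:mult-bound} and \ref{propo:mult-powers} force $I(H_1,H) \geq \delta^{\,\mathrm{sos}}(P^k) = 10k^2$. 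Without Lemma \ref{lem:scroll}, the genus argument, and this B\'ezout comparison, the proof does not close.
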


As a direct consequence of this result we have the following.

\begin{corollary}\label{cor:main}
The forms $M$, $R$ and $S$ are all stubborn, i.e, $M^{2k+1}, R^{2k+1}, S^{2k+1} \in \Delta_{3,6(2k+1)}$ are not sums of squares for all $k\geq 0$.
\end{corollary}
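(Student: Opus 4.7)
The plan is to simply verify that each of the three forms $M$, $R$, $S$ satisfies the hypotheses of Theorem~\ref{thm:main}, namely that each lies in $\cE(P_{3,6}) \cap \Delta_{3,6}$, and then invoke the theorem. No new computation is required here; all the needed ingredients are already recorded (with references) in the introduction.

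First, I would recall that the nonnegativity of $M$, $R$, and $S$ is classical: the Motzkin form is nonnegative by the AM--GM inequality applied to $X_1^4 X_2^2$, $X_1^2 X_2^4$, and $X_3^6$; Robinson's form $R$ was shown nonnegative in \cite{Robinson1973}; and $S$ was shown nonnegative in \cite{CL} (again by AM--GM). The fact that none of $M$, $R$, $S$ is a sum of squares — i.e., each belongs to $\Delta_{3,6}$ — is cited in the text: $M \in \Delta_{3,6}$ by \cite{Motzkin}, $R \in \Delta_{3,6}$ by \cite{Robinson1973}, and $S \in \Delta_{3,6}$ by \cite{CL}.

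Next, I would invoke the extremality statements recalled just before the theorem. The Motzkin form $M$ spans a (non-exposed) extreme ray of $P_{3,6}$ by \cite[p.~8]{CL2}, \cite[Thm.~5]{R}, and \cite[Thm.~2]{blekherman_hauenstein_ottem_ranestad_sturmfels_2012}. The Robinson form $R$ spans an exposed extreme ray of $P_{3,6}$ by \cite[Thm.~3.8]{CL}. The Choi--Lam form $S$ spans an extreme ray of $P_{3,6}$ by \cite{CL, CL2}. Hence $M, R, S \in \cE(P_{3,6}) \cap \Delta_{3,6}$.

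Applying Theorem~\ref{thm:main} to each of these three forms gives $M^{2k+1}, R^{2k+1}, S^{2k+1} \in \Delta_{3,6(2k+1)}$ for every $k \geq 0$, which is the content of the corollary. There is no obstacle to overcome in this proof: the entire content is packaged into Theorem~\ref{thm:main}, and the corollary is a pure deduction from previously known facts about $M$, $R$, and $S$ combined with that theorem.
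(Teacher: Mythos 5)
Your proposal is correct and matches the paper's approach exactly: the paper presents Corollary~\ref{cor:main} as a direct consequence of Theorem~\ref{thm:main}, and the only content of the argument is to confirm (via the references cited in the introduction) that $M$, $R$, and $S$ each lie in $\cE(P_{3,6})\cap\Delta_{3,6}$. Your write-up simply makes that implicit verification explicit.
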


\begin{remark}
  Theorem \ref{thm:main} does not imply Stengle's result above, as the ternary sextic $T\in \Delta_{3,6}$ is not extremal (see Subsection \ref{sub:Stengle}).
  Furthermore, by a result of Scheiderer \cite{Sch} that we stated above, all sufficiently large powers $P^{2k+1}$ 
  of a strictly positive form $P\in \textrm{\normalfont{int}}(P_{n,d})$ are sums of squares.
\end{remark}

\smallskip

More generally, we develop a new invariant of a real zero of a nonnegative ternary form, that we call  \emph{the SOS-invariant}.
It can be compared to the classical \emph{delta invariant} of a plane curve singularity (see Subsection \ref{sub:delta}). The main idea is that if the sum of SOS-invariants of $P\in P_{3,d}$ over all its real zeros is too large (specifically, greater than $d^2/4$), then $P$ must be stubborn. This happens, for example, for extremal ternary sextics in $\Delta_{3,6}$.
For higher degrees such forms exist due to results of Brugallé et al. from \cite{Brugall2018RealAC}. However, Theorem \ref{thm:main} does not admit a direct generalization, as no characterization of extremal forms in $P_{n,d}$ in terms of the number of real zeros is known except for $(n,d) = (3,6)$. 
\smallskip

By regarding a form $P\in P_{3,d}\subset P_{n,d}$ with more than $d^2/4$ real zeros as a form in $n\geq 4$ variables we show that stubborn forms exist in arbitrary number of variables, see Theorem \ref{thm:general}.
In Section \ref{sec:higher_n} we also show that the quaternary quartic $Q\in \Delta_{4,4}$ defined in \eqref{eq:S,Q} and \emph{the Horn form} $F\in \Delta_{5,4}$:
\begin{align}\label{eq:Horn}
F\ =\ \left(\sum_{j=1}^5 X_j^2\right)^2 - 4\ \sum_{j=1}^5 X_j^2X_{j+1}^2,
\end{align}
are both stubborn.
\begin{remark}
  The Horn form was originally defined as a quadric in $X_1^2,\dots, X_5^2$.
 It was communicated to Hall by Horn in the early 1960s, as a counterexample to a conjecture of Diananda asserting that a quadratic form that is nonnegative on the nonnegative orthant is a sum of a nonnegative form and a quadratic form with nonnegative coefficients only (see \cite[p.25]{Diananda1962OnNF} and \cite[p.334-5]{Hall1963CopositiveAC}).
\end{remark}

\smallskip

In Section \ref{sec:convexity} we initiate a systematic study of the set of non-stubborn forms that admit odd sums of squares powers.
For $k\geq 0$ let us define
\begin{align}\label{eq:Sigma_k}
\Sigma_{n,d}(2k+1)\ =\ \left\{P \in P_{n,d} \, :\, P^{2k+1} \in \Sigma_{n,d(2k+1)} \right\}.
\end{align}
Note that $\Sigma_{n,d}(1) = \Sigma_{n,d}$ and,
since $P^{2k+3} = P^{2k+1}\cdot P^2$, we have the inclusion $\Sigma_{n,d}(2k+1) 
\subseteq \Sigma_{n,d}(2k+3)$ and so it makes sense to define 
\begin{align}\label{eq:Sigma_inf}
\Sigma_{n,d}(\infty)\ =\ \bigcup_{k\,\geq\, 0} \Sigma_{n,d}(2k+1).
\end{align}
Let $\Delta_{n,d}(\infty)=P_{n,d}\setminus \Sigma_{n,d}(\infty)$ denote the set of stubborn forms in $P_{n,d}$.
With these notations, we have that $M, R, S, T \in \Delta_{3,6}(\infty)$, $Q\in \Delta_{4,4}(\infty)$ and $F\in \Delta_{5,4}(\infty)$. 
Since both $\Sigma_{n,d}$ and $P_{n,d}$ are closed convex cones, it is natural to ask this question for $\Sigma_{n,d}(2k+1)$ and for $\Sigma_{n,d}(\infty)$.

First observe that if $(P_i) \subset \Sigma_{n,d}(2k+1)$ is a sequence of forms converging to $P=\lim_{i\rightarrow \infty} P_i$,
then the sequence of powers $(P_i^{2k+1})\subset \Sigma_{n,d(2k+1)}$ converges to $P^{2k+1}=\lim_{i\rightarrow \infty} P^{2k+1}_i$, which by closedness of the sums of squares cone means that $P\in \Sigma_{n,d}(2k+1)$ and so $\Sigma_{n,d}(2k+1)$ is closed. It is unclear whether $\Sigma_{n,d}(2k+1)$ is convex when $k >0$. That is, if 
$P_1^{2k+1}$ and $P_2^{2k+1}$ are sums of squares, must  $(P_1+P_2)^{2k+1}$ be a sum of squares as well?
We prove in Theorem \ref{3.1} that if $P_1^{2k+1} $ is a sum of squares and $P_2$ is a sum of squares, then
$(P_1+P_2)^{2k+1}$ is a sum of squares. This is a special case of a more general Theorem \ref{3.3} that in particular yields convexity of \eqref{eq:Sigma_inf}.
Note however that $\Sigma_{n,d}(\infty)$ is not closed, when $\Delta_{n,d}(\infty) \neq \emptyset$, that is, if $\Delta_{n,d}\neq \emptyset$ (cf. Theorem \ref{thm:general}).
Indeed, a form $P\in\Delta_{n,d}(\infty)=P_{n,d}\setminus  \Sigma_{n,d}(\infty)$ lies in the closure of the open cone $\textrm{int}(P_{n,d}) \subset \Sigma_{n,d}(\infty)$ of strictly positive forms, each of which admits an odd power which is a sum of squares by \cite{Sch}.
Thus, the Motzkin form \eqref{eq:Motzkin} can be obtained as the limit $M=\lim_{\ep\rightarrow 0+} M_\ep$, where $ M_\ep = M + \ep (X_1^2 + X_2^2 + X_3^2)^3$ is strictly positive for $\ep>0$.
By Theorem \ref{3.1}, we see that  $\{ \ep\geq 0: M_\ep \in \Sigma_{3,6}(2k+1)\}$ is an interval of the form $[\be_{2k+1},\infty)$ for some $\be_{2k+1}>0$.
The coefficient of $X_1^2X_2^2X_3^2$ in $M_\ep$ is $-3+6\ep$, so $\be_1 \le \frac 12$.
Furthermore, one has $\be_1\geq \be_3\geq \be_5\geq \dots$ and $\lim_{k\rightarrow \infty}\be_{2k+1} = 0$.
 Thus, there are infinitely many $k$ so that $\Sigma_{3,6}(2k+1) \subsetneq \Sigma_{3,6}(2k+3) $. We strongly believe that this is true for all $k\geq 0$.


\begin{remark}\label{rem:hom}
The property of being nonnegative or a sum of squares is invariant under the (de)homogenization of a polynomial.
Thus, $P^{2k+1}$ is a sum of squares for  $P\in P_{n,d}$ if and only if so is $p^{2k+1}$ for the dehomogenized polynomial $p(x_1,\dots, x_{n-1}):=P(x_1,\dots, x_{n-1},1)$.
In particular, stubborn forms exist in $P_{n,d}$ if and only if there are \emph{stubborn} nonnegative $(n-1)$-variate polynomials of degree $d$.
\end{remark}

Based on Theorem \ref{thm:main} and Theorem \ref{thm:Q} from Section \ref{sec:higher_n} we make the following conjecture.

\begin{conjecture}\label{conj:general}
 Let $P \in \cE(P_{n,d})\cap \Delta_{n,d}$ be an extremal nonnegative form which is not a sum of squares. Then $P\in\Delta_{n,d}(\infty)$ is stubborn.
\end{conjecture}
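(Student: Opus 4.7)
The plan is to pursue an approach analogous to Theorem~\ref{thm:main}, generalizing the SOS-invariant framework from ternary forms to arbitrary $n$. First, I would attempt to define a local SOS-invariant $\delta^{\,\textrm{sos}}_{\bv}(P)$ at each real zero $\bv$ of $P\in P_{n,d}$, measuring the minimum local vanishing order forced on any form $H$ such that $H^2$ appears in a decomposition $P^{2k+1}=\sum_i H_i^2$. In the ternary case the delta invariant of a plane curve singularity governs this quantity, but for $n\geq 4$ one must either use a higher-dimensional local invariant (e.g.\ the Hilbert--Samuel multiplicity of the local ring at $\bv$, or the codimension of the local positivity cone inside the local ring) or reduce to the ternary case by hyperplane slicing.

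The main line of argument would then proceed in three steps. First, invoke extremality of $P$ to establish a structural richness of its real zero set $\{\bv\in\R^n:P(\bv)=0\}$: for $(n,d)=(3,6)$ this took the form of Robinson's bound of $10$ real zeros, and more generally one would hope to show that an extremal $P\in\Delta_{n,d}$ has sufficiently many real singularities, or singularities of sufficient local complexity, so that the sum of their SOS-invariants exceeds a higher-dimensional analogue of the obstruction threshold $d^2/4$ used in Theorem~\ref{thm:main}. Second, show that any putative decomposition $P^{2k+1}=\sum_i H_i^2$ forces each $H_i$ to satisfy the associated local vanishing conditions at every real zero of $P$. Third, derive a contradiction from a global dimension count: the space of forms of degree $(2k+1)d/2$ with the prescribed local behaviour at all such zeros should be too small to admit a decomposition summing to $P^{2k+1}$.

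The hard step will be the first one. Unlike the case $(3,6)$, no characterization of $\cE(P_{n,d})\cap \Delta_{n,d}$ in terms of real zero sets is known for general $(n,d)$. One natural strategy is to use the dual description of extreme rays via linear functionals and vanishing ideals (in the spirit of Blekherman--Sinn--Velasco) to extract enough local algebraic data at the real singularities of $P$ from the hypothesis that $P$ spans an extreme ray. A complementary strategy is induction on $n$: by dehomogenizing as in Remark~\ref{rem:hom} and restricting to generic hyperplanes through a real zero, one might reduce multiplicity estimates to the ternary case, although extremality itself is not preserved by such restrictions and would have to be replaced by a weaker but still workable substitute (perhaps a face-containment property in $P_{n,d}$). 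It is likely that for large $n$ a genuinely new obstruction invariant beyond $\delta^{\,\textrm{sos}}$, suited to singular loci of positive dimension, will be needed; proving the conjecture in full generality may well require combining this local analysis with quantitative results on the facial structure of $P_{n,d}$, and the forms $Q$ and $F$ handled in Theorem~\ref{thm:Q} should serve as test cases for calibrating any proposed invariant.
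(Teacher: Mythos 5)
This statement is labeled as a \emph{Conjecture} in the paper precisely because the authors do not have a proof of it: Theorem~\ref{thm:main} settles the case $(n,d)=(3,6)$ and Theorem~\ref{thm:Q} settles the particular form $Q\in\cE(P_{4,4})$, but the general case is left open. Your write-up is candid about being a plan rather than a proof, and the plan does correctly mirror the mechanism that works in the ternary sextic case (local SOS-type invariants at real zeros, a global degree bound \`a la B\'ezout, and a counting contradiction). It also correctly identifies where the argument breaks: for $(n,d)\neq(3,6)$ there is no known structural characterization of $\cE(P_{n,d})\cap\Delta_{n,d}$ forcing enough real singularities (or singularities of sufficient complexity), and when $n\geq 4$ the real zero locus of an extremal form can be positive-dimensional (the Horn form already has curves of real zeros), so the local plane-curve invariants used here do not apply directly. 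The paper itself flags exactly this: ``Theorem~\ref{thm:main} does not admit a direct generalization, as no characterization of extremal forms in $P_{n,d}$ in terms of the number of real zeros is known except for $(n,d)=(3,6)$.''

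So there is no paper proof to compare against, and your proposal does not constitute one either: both the first step (extremality implies a sufficiently rich or singular real zero set) and the invariant itself (a replacement for $\delta^{\,\textrm{sos}}$ suited to higher-dimensional or positive-dimensional singular loci, together with an analogue of the B\'ezout bound $d^2/4$) are genuinely open. Your suggestion to calibrate a prospective invariant against $Q$ and the Horn form $F$ is sensible, and the hyperplane-slicing idea is worth pursuing, but you already note that extremality is not preserved under restriction, which is the crux of why the reduction to the ternary case does not go through. As it stands, the proposal is a reasonable research program, not a proof, and it leaves the same gaps open that the paper itself acknowledges.
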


\medskip

\noindent{\bf Acknowledgements:} We thank Pablo Parrilo, Adam Parusi\'nski and Isabelle Shankar for useful discussions. Moreover, we thank Iosif Pinelis for his proof of Theorem \ref{MO} and Jim McEnerney, whose query to the third author partially motivated this work.
 
\section{Preliminaries}

Here we collect definitions and prove auxiliary results that are used throughout the text.

\subsection{Order of vanishing}

A form $F\in F_{n,d}$ has \emph{order of vanishing} or, simply, \emph{multiplicity} at least $m\in \mathbb{N}$ at $\X^*\in \mathbb{P}_{\mathbb{C}}^{n-1}$, if $\partial_{\X}^{\,\alpha} F(\X^*)=0$ for all $\alpha\in \N^n$ with $\vert\alpha\vert=\alpha_1+\dots+\alpha_n\leq m-1$. This is equivalent to vanishing of directional derivatives of order up to $m-1$,
\begin{align}\label{eq:mult}
    \frac{\dd^i}{\dd\varepsilon^i}\bigg|_{\varepsilon=0} F(\X^*+\varepsilon \bV)\ =\ 0\quad\textrm{for all} \quad \bV\in \mathbb{C}^n\quad \textrm{and}\quad i=0,1,\dots, m-1.
\end{align}
In particular, $F$ has multiplicity at least $1$ at its zeros $\X^*\in \mathcal{V}(F)\subset \mathbb{P}_{\mathbb{C}}^{n-1}$ and multiplicity at least $2$ at singular points of the hypersurface $\mathcal{V}(F)\subset \mathbb{P}_{\mathbb{C}}^{n-1}$.
If $m$ is the largest integer satisfying \eqref{eq:mult}, we say that the multiplicity of $F$ at $\X^*$ is \emph{(exactly) $m$} and write $m_{\X^*}(F)=m$.

If $F\in F_{n,d}$ has multiplicity $2$ at $\X^*\in \mathbb{P}_{\mathbb{C}}^{n-1}$ and 
\emph{the Hessian matrix}
\begin{align*}\textrm{Hess}_{\X^*} F\ :=\ \left(\partial_{x_i}\partial_{x_j} F(\X^*)\right)_{i,j=1}^n
\end{align*}of $F$ at $\X^*$ is of maximal rank $\textrm{rk}\,( \textrm{Hess}_{\X^*} F) = n-1$,
\footnote{Due to the Euler's identity for homogeneous functions, any vector proportional to $\X^*$ lies in the kernel of $\textrm{\normalfont{Hess}}_{\X^*}(F)$ and hence the Hessian of $F$ at a singular point is always rank-deficient.}
one says that $\X^*$ is \emph{an ordinary singularity} of $\mathcal{V}(F)$. 
A real zero $\X^*\in \RP^{n-1}$ of a nonnegative form $P\in P_{n,d}$ is a singular point of $\mathcal{V}(P)\subset \mathbb{P}_{\mathbb{C}}^{n-1}$. If $\X^*$ is an ordinary singularity, it is sometimes called \emph{a round zero} of $P$ \cite{blekherman_hauenstein_ottem_ranestad_sturmfels_2012, Iliman2014}. Then, the Hessian matrix $\textrm{Hess}_{\X^*} P$ must be positive semidefinite of corank one. 
\begin{remark}\label{rem:m-even}
The multiplicity $m=m_{\X^*}(P)$ of a nonnegative $P\in P_{n,d}$ at $\X^*\in \RP^{n-1}$ is even.
Indeed, otherwise the restriction
\[P(\X^*+t \bV)\ =\ \frac{1}{m!} \frac{\dd^m}{\dd\varepsilon^m}\bigg|_{\varepsilon =0} P(\X^*+\varepsilon \bV)\, t^m+O(t^{m+1}) \]
of $P$ to some line through $\X^*$ cannot be nonnegative. 
\end{remark}

Given a real zero $\X^*\in \RP^{n-1}$ of $P\in P_{n,d}$, Reznick \cite{Reznick2007OnHC} considers a subspace 
\begin{align*}
E(P,\X^*)\ =\ \left\{Q\in F_{n,d/2}\,:\, P-\varepsilon Q^2\geq 0\ \textrm{in some neighborhood of}\ \X^*\ \textrm{for some}\ \varepsilon>0\right\}
\end{align*}
of forms of half degree whose square (up to a constant) is upper-bounded by $P$ locally around $\X^*$, where the topology is (induced by) the Euclidean one.
In the following we refer to $E(P,\X^*)$ as \emph{the local SOS-support} of $P$ at $\X^*$ 
We call the codimension of this linear subspace of $F_{n,d/2}$ the \emph{half-degree invariant} of $P$ at $X^*$ and denote it by
\begin{align}\label{eq:hd}
  \delta^{\,\textrm{hd}}(P,\X^*)\ =\ {n-1+d/2 \choose d/2} - \dim E(P,\X^*).  
\end{align}
For a nonnegative form $P\in P_{n,d}$ the total sum of  $\delta^{\,\textrm{hd}}(P,\X^*)$ over all zeroes of $X^*$ of $P$ is called \emph{the half-degree invariant} of $P$ and denoted by $\delta^{\,\textrm{hd}}(P)$,

The following lemma is rather simple.

\begin{lemma}\label{lem:vanishing}
Let $\X^*\in \RP^{n-1}$ be a real zero of a nonnegative form $P\in P_{n,d}$, let $k\in \N$ and let $H\in E(P^k,\X^*)$.
Then $m_{\X^*}(P^k)\geq 2k$ and $m_{\X^*}(H)\geq k$. 

\end{lemma}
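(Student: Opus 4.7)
The plan is to address the two assertions in turn, both via elementary Taylor expansions along lines through $\X^*$. For the first assertion, I would invoke Remark~\ref{rem:m-even}: since $P$ is nonnegative and $P(\X^*)=0$, its multiplicity $m_{\X^*}(P)$ is even, so $m_{\X^*}(P) \geq 2$. Combining this with the elementary fact that multiplicity is additive under products, namely $m_{\X^*}(P^k) = k\cdot m_{\X^*}(P)$, gives $m_{\X^*}(P^k) \geq 2k$ immediately.

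For the second assertion I would argue by contradiction. Suppose $\ell := m_{\X^*}(H) < k$. By the very definition \eqref{eq:mult} of multiplicity, the map $\bV \mapsto \tfrac{1}{\ell!}\,\partial_\varepsilon^{\,\ell}|_{\varepsilon=0}H(\X^*+\varepsilon\bV)$ is a nonzero real homogeneous polynomial in $\bV$, so one can select a \emph{real} direction $\bV\in\R^n$ on which it is nonzero; call its value $a$. Restricting to the real line $\X^*+t\bV$ one obtains
\begin{equation*}
H(\X^*+t\bV)\ =\ a\, t^\ell + O(t^{\ell+1}), \qquad H^2(\X^*+t\bV)\ =\ a^2\, t^{2\ell} + O(t^{2\ell+1}),
\end{equation*}
with $a^2>0$, while the first assertion of the lemma gives $P^k(\X^*+t\bV) = O(t^{2k})$. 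Since $2\ell < 2k$, the expansion of $P^k(\X^*+t\bV)-\varepsilon H^2(\X^*+t\bV)$ in powers of $t$ has leading term $-\varepsilon a^2 t^{2\ell}$, which is strictly negative for all sufficiently small $t\neq 0$. This contradicts the defining condition $P^k - \varepsilon H^2 \geq 0$ on a neighborhood of $\X^*$ encoded by $H\in E(P^k,\X^*)$.

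There is no real obstacle in this argument; the only mild care point is justifying that the witness direction for the exact multiplicity of $H$ may be chosen real (needed so that the restriction $P^k - \varepsilon H^2$ is a real-valued function of $t$ to which the local nonnegativity hypothesis applies). As noted above, this is immediate from the fact that the $\ell$-th directional derivative of $H$ at $\X^*$ is itself a real polynomial in the direction vector, so vanishing on $\R^n$ would force vanishing on $\C^n$ and hence $m_{\X^*}(H)>\ell$.
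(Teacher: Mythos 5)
Your proof is correct and follows essentially the same route as the paper's: both establish $m_{\X^*}(P^k)\geq 2k$ from the evenness of $m_{\X^*}(P)$ and then read off $m_{\X^*}(H)\geq k$ from the local bound $\varepsilon H^2\leq P^k=O(t^{2k})$ along real lines through $\X^*$. The paper states the second step directly, while you run the contrapositive and explicitly justify that a real witness direction for the exact multiplicity of $H$ exists; that extra care is sound and tightens a point the paper leaves implicit, but it is the same argument.
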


\begin{proof}
Since the multiplicity of $P\in F_{n,d}$ at its real zero $\X^*\in \RP^{n-1}$ is even (see Remark \ref{rem:m-even}), the univariate polynomial $t\mapsto P(\X^*+t\bV)$ is divisible by $t^2$ for any $\bV\in \R^n$.
    Then $P^k(\X^*+t \bV)$ is divisible by $t^{2k}$, which means that all directional derivatives of $P^k$ at $\X^*$ of order less than $2k$ are equal to zero.

For $H\in E(P,\X^*)$, $\bV \in \R^n$ and a sufficiently small $t\in \R$, the  univariate polynomial $H^2(\X^*+t \bV)$ is bounded (up to a constant) by $P^k(\X^*+t\bV )= O(t^{2k})$. Then the multiplicity of $H$ at $\X^*$ is at least $k$. 
\end{proof}

\subsection{Intersection multiplicity}
For two bivariate polynomials $f, g\in \C[x_1,x_2]$ and a point $\x^*=(x^*_1,x^*_2)\in \A^2_{\C}$ \emph{the intersection multiplicity} of $f$ and $g$ at $\x^*$ is defined as the dimension of the quotient of \emph{the local ring} $\cO_{\x^*}=\left\{\frac{p}{q}\,:\, p,q\in \C[x_1,x_2],\, q(\x^*)\neq 0\right\}$ of $\A^2_{\C}$ at $\x^*$ by the ideal generated by $f$ and $g$,
\begin{align}\label{eq:local_int_mult}
    I_{\x^*}(f,g)\ =\ \dim_{\C}\left( \cO_{\x^*}/\langle f,g\rangle\right).
\end{align}
In particular,  we have that $I_{\x^*}(f,g)>0$ is positive if and only if $f(\x^*)=g(\x^*)=0$.
In this case, $I_{\x^*}(f,g)=1$ if and only the curves $f=0$ and $g=0$ intersect transversally at $\x^*$, that is, the gradient vectors $(\partial_{x_1} f(\x^*), \partial_{x_2} f(\x^*))$ and $(\partial_{x_1} g(\x^*), \partial_{x_2} g(\x^*))$ are linearly independent.
If $f$ and $g$ share a common factor in $\C[x_1,x_2]$ that vanishes at $\x^*$, the intersection multiplicity $I_{\x^*}(f,g)=\infty$ is infinite.
For ternary forms (homogenous polynomials in three variables) $F, G\in \C[X_1,X_2,X_3]$ and a point $\X^*\in \PP_{\C}^2$
one defines the intersection multiplicity of $F$ and $G$ at $\X^*$ as $I_{\X^*}(F,G):=I_{\x^*}(f,g)$, where $f$ and $g$ are dehomogenizations of $F$ and $G$, and $\x^*\in \A_{\C}^2$ is the representative of $\X^*\in \PP_{\C}^2$ in the corresponding affine chart.
The celebrated B\'ezout theorem asserts that the number of intersection points of two projective plane curves counted with multiplicities is equal to the product of their degrees.

\begin{theorem}[B\'ezout theorem]\label{thm:Bezout}
    Let $F, G\in \C[X_1,X_2,X_3]$ be ternary forms that have no common factors of positive degree. Then
    \begin{align}
       \sum_{\x^*\in \cV(F)\,\cap \,\cV(G)} I_{\x^*}(F,G)\ =\ \deg(F)\cdot \deg(G).
    \end{align}
    In particular, $\cV(F)\cap \cV(G)\subset\PP_{\C}^2$ consists of at most $\deg(F)\cdot \deg(G)$ points. 
\end{theorem}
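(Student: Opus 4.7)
The plan is to prove B\'ezout's theorem by comparing the Hilbert function of the graded quotient $R := S/(F,G)$, where $S = \C[X_1,X_2,X_3]$, with a local decomposition at the finitely many intersection points.

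Since $F$ and $G$ share no common factor of positive degree and $S$ is a UFD, $G$ is a nonzerodivisor modulo $F$, so $(F,G)$ forms a regular sequence in $S$. The Koszul complex
\[0 \longrightarrow S(-d_F - d_G) \longrightarrow S(-d_F)\oplus S(-d_G) \longrightarrow S \longrightarrow R \longrightarrow 0\]
is therefore exact, where $d_F := \deg F$ and $d_G := \deg G$. Taking Hilbert series yields
\[H_R(t) \ =\ \frac{(1-t^{d_F})(1-t^{d_G})}{(1-t)^3},\]
from which one reads off that the Hilbert polynomial of $R$ is the constant $d_F d_G$ for all sufficiently large degrees.

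Next I would relate this global count to the local intersection multiplicities. Because $R$ has Krull dimension $1$ (its affine cone being one-dimensional), $\cV(F)\cap\cV(G)\subset \mathbb{P}_{\C}^2$ is a finite set. After a linear change of coordinates one may assume none of these points lies on the line $\{X_3 = 0\}$. Dehomogenizing at $X_3 = 1$ produces $f, g \in \C[x_1,x_2]$ and the Artinian $\C$-algebra $A := \C[x_1,x_2]/(f,g)$. A multiplication-by-$X_3^k$ argument shows that for all sufficiently large $k$, the dehomogenization map $R_k \to A$ is an isomorphism of $\C$-vector spaces, so $\dim_\C A = d_F d_G$. The Chinese Remainder Theorem applied to the Artinian ring $A$, whose maximal ideals correspond bijectively to the intersection points, gives
\[A \ \cong\ \prod_{\x^* \in \cV(f)\cap\cV(g)} \cO_{\x^*}/(f,g),\]
so that $\sum_{\X^*} I_{\X^*}(F,G) = \dim_\C A = d_F d_G$ as claimed.

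The main technical obstacle is the isomorphism $R_k \cong A$ for large $k$: it relies on choosing coordinates generically so that no intersection point lies at infinity, and then verifying that multiplication by powers of $X_3$ is a bijection modulo $(F,G)$ in high degree. Everything else — regularity of the sequence $(F,G)$, Hilbert-series bookkeeping, and the Chinese Remainder decomposition — is formal once this bridge from projective to affine is in place.
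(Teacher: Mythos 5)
The paper states B\'ezout's theorem as a known classical result (citing it as ``celebrated'') and provides no proof of its own, so there is no in-paper argument to compare yours against. Your proof is a correct and standard modern proof: the Koszul resolution gives the Hilbert series of $S/(F,G)$, whose eventual constancy at $d_F d_G$ counts the length of the saturated Artinian quotient, and the Chinese Remainder decomposition of the affine Artinian ring $A = \C[x_1,x_2]/(f,g)$ matches this global count with the sum of local multiplicities $\dim_\C \cO_{\x^*}/(f,g)$, which is exactly the paper's definition of $I_{\x^*}(f,g)$. The one step you flag --- that $R_k \to A$ is an isomorphism for $k \gg 0$ after a generic linear change of coordinates --- is the genuine technical content (it amounts to showing multiplication by $X_3$ on $R$ is eventually bijective in each graded piece, using that $V(F,G,X_3)$ is irrelevant), and your statement of it is accurate. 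The argument is sound; it is simply supplying a proof the paper chose to omit.
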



A tangent to $\cV(F)\subset \mathbb{P}^2_{\mathbb{C}}$ at $\X^*\in \cV(F)$
is a projective zero $\mathbf{X}'\in \mathbb{P}\left(\X^* \right)^{\perp}$ of the homogeneous part of $\mathbf{X}'\in \left(\X^*\right)^\perp\mapsto F(\X^*+ \mathbf{X}')$ of lowest degree $m_{\X^*}(F)$.
In particular, tangents to $\cV(F)$ at $\X^*=[0:0:1]\in \cV(F)$ are projective zeros $\mathbf{X}'=[X'_1:X'_2]\in \mathbb{P}^1_{\mathbb{C}}$ of the degree $m_{\x^*}(f):=m_{\X^*}(F)$ part of the dehomogenized polynomial $f(X'_1,X'_2)=F(X'_1,X'_2,1)$. 
A curve $\cV(F)\subset\PP_{\C}^2$ can have at most $m_{\X^*}(F)$ linearly independent tangents. 
The following known result inspired our proof of Theorem \ref{thm:main}; this is essentially \cite[Thm. $3.4$]{Liang2019}.

\begin{lemma}\label{lem:multiplicities}
For ternary forms  $F, G\in \C[X_1,X_2,X_3]$ and $\X^*\in \PP_{\C}^2$,
\begin{align}
    I_{\X^*}(F,G)\ \geq\ m_{\X^*}(F)\cdot m_{\X^*}(G)
\end{align}
with equality if and only if $\cV(F)$ and $\cV(G)$ do not share a tangent at $\X^*$.
\end{lemma}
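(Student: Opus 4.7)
The plan is to reduce the problem to an affine computation and then exploit the $\mathfrak{m}$-adic filtration of the local ring. After a linear change of coordinates, I may assume $\X^* = [0:0:1]$ and work in the chart $X_3 = 1$. Set $f(x_1,x_2) = F(x_1,x_2,1)$, $g(x_1,x_2) = G(x_1,x_2,1)$, let $m = m_{\X^*}(F)$ and $n = m_{\X^*}(G)$, and decompose $f = f_m + f_{m+1} + \cdots$, $g = g_n + g_{n+1} + \cdots$ into homogeneous pieces. Tangents to $\cV(F)$ and $\cV(G)$ at $\X^*$ correspond precisely to the linear factors of $f_m$ and $g_n$, so the ``no common tangent'' condition is equivalent to $\gcd(f_m,g_n) = 1$ in $\C[x_1,x_2]$.

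For the inequality I would consider the $\C$-linear map
$$\alpha\colon \cO_{\x^*}/\mathfrak{m}^{n}\,\oplus\,\cO_{\x^*}/\mathfrak{m}^{m}\ \longrightarrow\ \cO_{\x^*}/\mathfrak{m}^{m+n},\qquad (\bar a,\bar b)\ \longmapsto\ \overline{af+bg},$$
which is well defined because $f \in \mathfrak{m}^m$ and $g \in \mathfrak{m}^n$. Its cokernel is $\cO_{\x^*}/(I+\mathfrak{m}^{m+n})$, where $I=\langle f,g\rangle$. Using $\dim_\C\cO_{\x^*}/\mathfrak{m}^k=\binom{k+1}{2}$, a straightforward count gives
$$\dim_\C\cO_{\x^*}/(I+\mathfrak{m}^{m+n})\ \geq\ \binom{m+n+1}{2}-\binom{n+1}{2}-\binom{m+1}{2}\ =\ mn,$$
and since $\cO_{\x^*}/I$ surjects onto $\cO_{\x^*}/(I+\mathfrak{m}^{m+n})$ this yields $I_{\X^*}(F,G)\geq mn$. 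In fact the cokernel has dimension exactly $mn+\dim_\C\ker(\alpha)$, so equality $I_{\X^*}(F,G)=mn$ forces both (i) $\alpha$ injective, and (ii) $\mathfrak{m}^{m+n}\subseteq I$.

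For the equality characterization I plan to show that each of (i) and (ii) is equivalent to $\gcd(f_m,g_n)=1$. For (i), decompose a hypothetical kernel pair $(a,b)$ into homogeneous parts; comparing the lowest-degree component of $af+bg\equiv 0 \pmod{\mathfrak{m}^{m+n}}$ produces a relation $c\,f_m+d\,g_n=0$ with $\deg c<n$ and $\deg d<m$, and unique factorization in $\C[x_1,x_2]$ combined with $\gcd(f_m,g_n)=1$ forces $c=d=0$. Iterating up the degrees kills all homogeneous components of $a$ and $b$. Conversely, a common linear factor $L$ with $f_m=L\tilde f$ and $g_n=L\tilde g$ produces the explicit nonzero kernel element $(\overline{\tilde g},-\overline{\tilde f})$. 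For (ii), the Hilbert series $(1-t^m)(1-t^n)/(1-t)^2$ of $\C[x_1,x_2]/\langle f_m,g_n\rangle$ is a polynomial of degree $m+n-2$, so in the graded polynomial ring $\langle f_m,g_n\rangle$ contains every form of degree $\geq m+n-1$. Replacing $f_m$ and $g_n$ by $f$ and $g$ introduces only strictly higher-order error terms, yielding $\mathfrak{m}^{m+n-1}\subseteq I+\mathfrak{m}^{m+n}$, and Nakayama's lemma applied to the finitely generated local module $(\mathfrak{m}^{m+n-1}+I)/I$ promotes this to $\mathfrak{m}^{m+n-1}\subseteq I$, which is more than enough for (ii).

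The main delicate point is transferring the graded-ring containment for $\langle f_m,g_n\rangle$ to the local ring $\cO_{\x^*}$: one must carefully control the error terms $f-f_m$ and $g-g_n$ and then invoke Nakayama. The degree-cascade injectivity argument is also a bit finicky when $m\neq n$, because one has to track the asymmetric ranges for the homogeneous pieces of $a$ and $b$, peeling off the constraints in order of increasing total degree. Both ingredients are classical in the intersection theory of plane curves (cf.\ Fulton, \emph{Algebraic Curves}, §3.3).
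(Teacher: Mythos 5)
Your proposal is correct, and the dimension count, the degree-cascade injectivity argument, and the Hilbert-series-plus-Nakayama step for $\mathfrak{m}^{m+n-1}\subseteq I$ all check out. The paper itself does not prove this lemma: it labels it as a known result and cites {[}Liang 2019, Thm.~3.4{]}, so there is no in-paper argument to compare against; your proof is a self-contained reconstruction of the classical $\mathfrak{m}$-adic filtration argument (essentially Fulton, \emph{Algebraic Curves}, \S 3.3), which is the standard route and presumably close to what the cited source does. Two small remarks that would tighten the write-up. First, the phrase ``iterating up the degrees'' is unnecessary: once the lowest nonvanishing homogeneous pieces $a_p$, $b_q$ of a kernel pair are compared (handling separately the cases $p+m<q+n$, $p+m>q+n$, $p+m=q+n$, and the cases where $a$ or $b$ is identically zero), coprimality of $f_m$ and $g_n$ already forces a contradiction; there is nothing left to iterate. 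Second, you should note explicitly that when $f$ and $g$ share a local factor vanishing at $\x^*$ the intersection number is infinite, so the inequality is trivial and the equality case is vacuously consistent with the ``shared tangent'' criterion; your finite-dimensional manipulations (in particular the Nakayama step) implicitly live in the coprime case, which is the only place they are needed.
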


The blow-up of $\mathbb{A}^2_{\mathbb{C}}$ in a point $\x^*=(x_1^*,x_2^*)$ is a surface $S\subset  \mathbb{A}^2_{\mathbb{C}}\times \mathbb{P}^1_{\mathbb{C}}$ defined by the polynomial $(x_1-x_1^*)X'_2-(x_2-x_2^*)X'_1$ (where $[X_1':X_2']$ are the homogeneous coordinates on $\mathbb{P}^1_{\mathbb{C}}$) together with birational morphism
\begin{align}\label{eq:proj}
  \pi: S\ \rightarrow\ \mathbb{A}^2_{\mathbb{C}},\quad \left((x_1,x_2), [X_1':X_2']\right)\ \mapsto\ (x_1,x_2).
\end{align}
Points in $S$ with $[X'_1:X'_2]=[1:x'_2]$ (respectively, $[X'_1:X'_2]=[x'_1:1]$) form a Zariski open set $S_1$ (respectively, $S_2$) isomorphic to $\mathbb{A}^2_{\mathbb{C}}$ with coordinates $(x_1,x'_2)$ (respectively, $(x_2,x'_1)$). 
\emph{The blow-up} (or \emph{the total transform}) of an affine curve $f=0$ in a point $\x^*\in\mathbb{A}^2_{\mathbb{C}}$ is the inverse image of $f=0$ under \eqref{eq:proj}, that is, it is a curve on $S$ defined by $\pi^*f = f\circ \pi$.
\emph{The strict transform} of $f=0$ in $\x^*$ is the closure of the inverse image of $\{f=0\}\setminus \{\x^*\}$, it coincides with the total transform unless $f(\x^*)=0$.
If $f(\x^*)=0$, the blow-up of $f=0$ is the union of its strict transform and \emph{the exceptional line} $\pi^{-1}(\x^*)=\{\x^*\}\times \mathbb{P}^1_{\mathbb{C}}$, which is just a copy of the projective line.
In a local chart ($S_0$ or $S_1$) of $S$, the strict transform of $f=0$ is a curve in $\mathbb{A}^2_{\mathbb{C}}$ whose defining polynomial we denote by $f'$.   
Points $[X'_1:X'_2]$ on $\mathbb{P}^1_{\mathbb{C}}\simeq \pi^{-1}(\x^*)$ at which the strict transform intersects the exceptional line are called \emph{the first order infinitely near points} of $f=0$ at $\x^*$, they are identified with tangents of $\mathcal{V}(F)$ at $\X^*=[x_1^*:x_2^*:1]$ via $[X'_1:X'_2]\mapsto [X'_1:X'_2:X_3']$, where $X'_3=-x_1^*X'_1-x_2^*X'_2$.

Given a first order infinitely near point  $\x'\in \pi^{-1}(\x^*)$ of $f=0$ at $\x^*$,
we can blow-up the strict transform $f'=0$ in $\x'$ again.
After doing finitely many successive blow-ups at singular points of $f=0$ and of its higher order strict transforms, we end up with a smooth curve.
This resolution of singularities process is guaranteed to terminate by \cite[Thm. 1.43]{Kollar}.

\begin{example}\label{ex:Stengle0}
  The polynomial $f=x_1^2+x_2^4-2x_1x_2^2+x_1^3+2x_1^4-2x_1^3x_2^2+x_1^6$ is singular at $(0,0)$ with $m_{(0,0)}(f)=2$.
  Its strict transform (in the coordinates $(x_1', x_2)$, $x_1=x_1'x_2$) is given by \[f'\ =\ x_1'^2+x_2^2-2x_1'x_2+x_1'^3x_2+2x_1'^4x_2^2-2x_1'^3x_2^3+x_1'^6x_2^4\] and the unique first order infinitely near point $(x_1',x_2)=(0,0)$ of $f=0$ at $(0,0)$ has multiplicity $m_{(0,0)}(f')=2$.
  The strict transform of $f'=0$ in $(0,0)$ (in the coordinates $(x_1'',x_2)$ with $x_1'=x_1''x_2$) is given by
  \[
f''\ =\ x_1''^2+1-2x_1''+x_1''^3x_2^2+2x_1''^4x_2^4-2x_1''^3x_2^4+x_1''^6x_2^8
\]
and the unique first order infinitely near point $(x_1'',x_2)=(1,0)$ of $f'=0$ at $(0,0)$ has  $m_{(1,0)}(f'')=2$.
The third blow-up $x_1''=1+x_1'''x_2$ reveals
\begin{align*}
f'''\ =\ x_1'''^2+(1+x_1'''x_2)^3+2(1+x_1'''x_2)^4x_2^2-2(1+x_1'''x_2)^3x_2^2+(1+x_1'''x_2)^6x_2^6.
\end{align*}
Since $f'''=0$ does not have singular points on the exceptional fiber $x_2=0$, the resolution of singularities process terminates after the third blow-up.
\end{example}


The following Noether's formula gives us a way to compute the local intersection multiplicity \eqref{eq:local_int_mult} by doing successive blow-ups, it is a refinement of Lemma \ref{lem:multiplicities}.

\begin{theorem}{\cite[Lemma $3.3.4$]{casas2000singularities}, \cite[Theorem $3.10$]{ChalChal2022}}\label{thm:Noether}
  Let $f, g\in \mathbb{C}[x_1,x_2]$ be polynomials that have no common factors of positive degree and let $\x^*\in \mathbb{A}^2_{\mathbb{C}}$ be their common zero. Then
  \begin{align*}
    I_{\x^*}(f,g)\ =\ m_{\x^*}(f)\cdot m_{\x^*}(g) + \sum_{\x'} I_{\x'}(f',g').
  \end{align*}
  where the sum is over common first order infinitely near points $\x'$ of $f=0$ and $g=0$ at $\x^*$.
\end{theorem}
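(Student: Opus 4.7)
The plan is to transport the intersection number $I_{\x^*}(f,g)$ onto the blow-up surface $S$ and use that the exceptional divisor $E$ absorbs the multiplicity contribution in a controlled way. After translating $\x^* = \mathbf{0}$, set $m = m_{\x^*}(f)$ and $n = m_{\x^*}(g)$. In each blow-up chart the divisor $E$ is the zero locus of a single coordinate $u$, and the pullbacks factor as $\pi^* f = u^m f'$ and $\pi^* g = u^n g'$ where $f', g'$ are the strict transforms; this is the identity $f(x_1, x_1 t) = x_1^m f'(x_1, t)$ illustrated in Example~\ref{ex:Stengle0}, and the analogous factorization holds in chart $S_2$.

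The crux is to establish the asymmetric projection-formula identity
\[
I_{\x^*}(f,g) \;=\; \sum_{\x' \,\in\, \pi^{-1}(\x^*)} I_{\x'}(\pi^* f,\, g'),
\]
which pulls back only one of the two factors. I would derive this from the fact that $\pi$ is an isomorphism away from $\x^*$ together with $\pi_* V(g') = V(g)$: transferring lengths through the induced morphism of local rings equates the $0$-cycle $V(f)\cdot V(g)$ localized at $\x^*$ with the total length of $\pi^* V(f) \cdot V(g')$ supported over $\pi^{-1}(\x^*)$. Granting this, substitute $\pi^* f = u^m f'$ inside each $\cO_{\x'}$ and invoke additivity $I_{\x'}(ab, c) = I_{\x'}(a,c) + I_{\x'}(b,c)$, which is now legitimate because $g'$ does not contain $E$ as a component:
\[
I_{\x'}(\pi^* f, g') \;=\; I_{\x'}(f', g') + m\, I_{\x'}(u, g').
\]
Summing over $\x' \in \pi^{-1}(\x^*)$ yields $I_{\x^*}(f,g) = \sum_{\x'} I_{\x'}(f',g') + m\,(V(g') \cdot E)$. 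The remaining identity $V(g') \cdot E = n$ is a direct chart computation: $g'|_E$ is (up to passing between the two charts) the degree-$n$ form $g_n(1,t)$ coming from the leading part of $g$ at $\x^*$, which vanishes at exactly $n$ points of $E$ counted with multiplicity. Restricting the remaining sum to common infinitely near points of $f=0$ and $g=0$ is harmless, since $I_{\x'}(f',g') = 0$ at any $\x'$ where only one of the strict transforms vanishes.

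The main obstacle is the projection-formula identity itself. The naive symmetric statement $I_{\x^*}(f,g) \stackrel{?}{=} \sum_{\x'} I_{\x'}(\pi^* f, \pi^* g)$ is false: both pullbacks share $E$ as a component with positive multiplicity, so every summand on the right is infinite. The fix is precisely the desymmetrization above---replacing $\pi^* g$ by the strict transform $g'$ before intersecting---whose justification requires a careful length computation in the local rings of $S$ and $\A^2_{\C}$. As a fully elementary alternative I would proceed by induction: when $f$ and $g$ share no tangent at $\x^*$, Lemma~\ref{lem:multiplicities} gives $I_{\x^*}(f,g) = mn$ and the infinitely near points of $f$ and $g$ are disjoint so all summands $I_{\x'}(f',g')$ vanish; when a tangent is shared, one blow-up reduces the problem to computing intersection numbers at strictly smaller infinitely near configurations, to which the inductive hypothesis applies.
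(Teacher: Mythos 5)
The paper does not prove this statement; it is imported as a cited result from Casas-Alvero's \emph{Singularities of Plane Curves} and from Chalmovianská--Chalmovianský, so there is no in-paper argument to compare against. Your main line of attack — pulling $f$ back to the blow-up while keeping $g$ as its strict transform, invoking the asymmetric identity $I_{\x^*}(f,g) = \sum_{\x' \in \pi^{-1}(\x^*)} I_{\x'}(\pi^*f, g')$, and then splitting $\pi^*f = u^m f'$ by additivity together with $V(g')\cdot E = n$ — is the standard proof, and you correctly diagnose why the naive symmetric version fails (both total transforms contain $E$, so each term is $\infty$). The step you acknowledge but leave open, the asymmetric projection formula, is the real content: a clean way to close it is that $C' := V(g')$ is, near $\pi^{-1}(\x^*)$, finite over the germ of $C := V(g)$ at $\x^*$, that $\mathcal{O}_{C,\x^*}$ embeds into $(\pi_*\mathcal{O}_{C'})_{\x^*}$ with finite-length cokernel supported at $\x^*$, and that $f$ is a nonzerodivisor on both (no common factor); a snake-lemma comparison of Euler characteristics of multiplication by $f$ then forces $\dim\mathcal{O}_{C,\x^*}/(f) = \dim(\pi_*\mathcal{O}_{C'})_{\x^*}/(\pi^*f) = \sum_{\x'} I_{\x'}(\pi^*f, g')$. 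That length argument is what should be written out.

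The ``fully elementary alternative'' at the end, however, is not a proof. In the inductive step you blow up once and then say the inductive hypothesis applies to the ``strictly smaller infinitely near configurations,'' but the inductive hypothesis only yields Noether's formula \emph{at $\x'$}, i.e.\ a relation between $I_{\x'}(f',g')$ and the second-order quantities $I_{\x''}(f'',g'')$. It says nothing about how $I_{\x^*}(f,g)$ relates to the first-order quantities $I_{\x'}(f',g')$, which is exactly the content of the identity you are trying to prove. That link still has to come from the projection-formula computation, so the ``alternative'' is circular rather than more elementary. (Relatedly, even to set up an induction on $I_{\x^*}(f,g)$ you would need the strict inequality $I_{\x'}(f',g') < I_{\x^*}(f,g)$, and the only clean way to see that is as a consequence of the formula itself, since $m_{\x^*}(f)m_{\x^*}(g)\geq 1$.)
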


We end this subsection with proving that blow-ups preserve nonnegativity of polynomials.

\begin{lemma}\label{lem:nonnegative_blow}
  Let $f\in \R[x_1,x_2]$ be a polynomial that is nonnegative locally around $\x^*\in \mathbb{A}^2_{\,\R}$. If $f(\x^*)=0$ and $[X_1':X_2']\in \mathbb{P}^1_{\R}$ is a real first order infinitely near point of $f=0$ at $\x^*$, then  the strict transform of $f=0$ at $\x^*$  is given by a polynomial that is nonnegative locally around $(\x^*, [X_1':X_2'])\in S$. 
\end{lemma}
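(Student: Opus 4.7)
The plan is to work in one of the two standard affine charts of the blow-up $S \to \A^2_{\C}$, write the pullback of $f$ explicitly in those coordinates, and factor out the power of a coordinate coming from the exceptional fiber. The crucial ingredient is that the multiplicity $m = m_{\x^*}(f)$ is even, which is a local analogue of Remark~\ref{rem:m-even}.

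By symmetry I may assume $X_1' \neq 0$, so the point $(\x^*, [X_1':X_2'])$ lies in the chart $S_1$ with coordinates $(x_1, x_2')$, blow-down $\pi(x_1, x_2') = (x_1,\, x_2^* + (x_1 - x_1^*) x_2')$, exceptional fiber $\{x_1 = x_1^*\}$, and infinitely near point $(x_1^*, x_2'^*)$ where $x_2'^* = X_2'/X_1'$. Expanding $f$ at $\x^*$ as $f(x_1,x_2) = \sum_{i+j\geq m} a_{ij}(x_1-x_1^*)^i(x_2-x_2^*)^j$ and substituting $x_2 - x_2^* = (x_1 - x_1^*)\, x_2'$ shows at once that
\[
\pi^* f(x_1, x_2')\ =\ (x_1 - x_1^*)^m \cdot f'(x_1, x_2'),
\]
where $f' = \sum_{i+j\geq m} a_{ij}(x_1-x_1^*)^{i+j-m}(x_2')^j$ is the polynomial defining the strict transform in $S_1$. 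Since $f \geq 0$ on some neighborhood $V$ of $\x^*$ and $\pi$ is continuous, $\pi^* f \geq 0$ on $\pi^{-1}(V)$, which contains an open neighborhood $U$ of $(x_1^*, x_2'^*)$.

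The main step is then to show that $m$ is even. This is the one-line argument of Remark~\ref{rem:m-even} in local form: if $m$ were odd, then along any real line $(x_1, x_2) = \x^* + t\bv$ on which the degree-$m$ homogeneous part of $f$ at $\x^*$ does not vanish, one would have $f(\x^* + t\bv) = c\, t^m + O(t^{m+1})$ with $c \neq 0$, contradicting local nonnegativity of $f$ at $\x^*$. Once $m$ is even, $(x_1 - x_1^*)^m \geq 0$, hence $f' = \pi^* f / (x_1 - x_1^*)^m \geq 0$ on the dense open set $U \cap \{x_1 \neq x_1^*\}$, and continuity of the polynomial $f'$ extends the inequality to all of $U$. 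The case $X_1' = 0$ is handled identically in the chart $S_2$.

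I do not foresee any serious obstacle. The one subtlety worth noting is that Remark~\ref{rem:m-even} is stated for globally nonnegative forms, but its argument is entirely local and applies verbatim to polynomials nonnegative only in a neighborhood of a real zero.
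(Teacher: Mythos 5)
Your proof is correct and follows essentially the same approach as the paper's: factor the pullback of $f$ as the $m$-th power of the exceptional coordinate times the strict transform, invoke the local version of Remark~\ref{rem:m-even} to see that $m$ is even, and conclude. The only cosmetic difference is the choice of affine chart; your extra care in spelling out the density-and-continuity step, and in noting that Remark~\ref{rem:m-even} applies locally, is welcome but not a divergence from the paper's argument.
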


\begin{proof}
  Let us assume without loss of generality that $\x^*=(0,0)$ and $[X_1':X_2']=[a:1]$, $a\in \R$.
  Then by Remark \ref{rem:m-even} the multiplicity $m_{\x^*}(f)$ is even and in the local chart on $S$ given by the coordinates $(x_1',x_2)$, $x_1=x_1'x_2$, we have that $f(x_1,x_2)=x_2^{m_{\x^*}(f)}f'(x_1',x_2)$.
  It follows that $f'$ is nonnegative locally around $(a,0)\in \mathbb{A}^2_{\,\R}$.
\end{proof}

\subsection{The delta invariant and the SOS-invariant}\label{sub:delta}

\emph{The (local) delta invariant} $\delta_{\x^*}(f)$ is a classical invariant of a singular point $\x^*\in \mathbb{A}^2_{\mathbb{C}}$ of an algebraic curve $f=0$ that can be defined as the dimension
  \begin{align}\label{eq:delta_dim}
    \delta_{\x^*}(f)\ =\ \dim_{\mathbb{C}}\left(\, \overline{\mathcal{O}_{f,\x^*}} \,\big/\, \mathcal{O}_{f,\x^*}\right) 
  \end{align}
of \emph{the integral closure} $\overline{\mathcal{O}_{f,\x^*}}$ of the local ring $\mathcal{O}_{f,\x^*}:=\mathcal{O}_{\x^*}/(f)$ of the curve $f=0$ at $\x^*$. We set $\delta_{\X^*}(F):=\delta_{\x^*}(f)$ for a form $F\in \C[X_1,X_2,X_3]$ and $\X^*\in \mathbb{P}^2_{\mathbb{C}}$, where $f$ is a dehomogenization of $F$ and $\x^*$ is the affine representative of $\X^*$. 

\begin{remark}\label{rem:gd}
  The sum $\delta(F)$ of $\delta_{\X^*}(F)$ over all singular points $\X^*\in \mathbb{P}^2_{\mathbb{C}}$ of a plane curve $\cV(F)$ is known as \emph{the (total) delta invariant}.
By the genus-degree formula \cite[Section 3.11]{CasasAlverozbMATH01497487}, $\delta(F)$ is equal to the defect between the geometric genus of $\cV(F)$ and its arithmetic genus.
\end{remark}

Similarly to Noether's formula, one can define the local delta invariant as $\delta_{\x^*}(f)=1$ in case of an ordinary singularity and otherwise as
\begin{align}\label{eq:delta}
  \delta_{\x^*}(f)\ =\ \frac{m_{\x^*}(f)(m_{\x^*}(f)-1)}{2} + \sum_{\x'} \delta_{\x'}(f'),
\end{align}
where the sum is over all first order infinitely near points $\x'$ of $f=0$ at $\x^*$.

\begin{example}\label{ex:Stengle}
 For the polynomial $f=x_1^3+(x_2^2-x_1^3-x_1)^2=x_1^2+x_2^4-2x_1x_2^2+x_1^3+2x_1^4-2x_1^3x_2^2+x_1^6$ from  Example \ref{ex:Stengle0} the formula \eqref{eq:delta} gives us
\begin{equation}\label{eq:Stengle_delta}
  \begin{aligned}
  \delta_{(0,0)}(f)\ &=\ 1+\delta_{(0,0)}(f')\ =\ 1+1+\delta_{(1,0)}(f'')\ =\ 1+1+1\ =\ 3,
\end{aligned}
\end{equation}
where $f'$ and $f''$ define strict transforms of $f=0$ and $f'=0$ at $(0,0)$.
  \end{example}

For a real $f\in \R[x_1,x_2]$ and $\x^*\in \mathbb{A}^2_{\mathbb{R}}$ we also define \emph{the real delta invariant}
\begin{align}
  \delta^{\,\R}_{\x^*}(f)\ =\ \frac{m_{\x^*}(f)(m_{\x^*}(f)-1)}{2} + \sum_{\x'\,-\, \textrm{real} } \delta_{\x'}(f'),
\end{align}
where the sum is over only real first order infinitely near points of $f=0$ at $\x^*$.

\smallskip
Finally, the quantity that plays a prominent role in our work is \emph{the SOS-invariant}  $\delta^{\,\textrm{sos}}_{\x^*}(f)$ of a real polynomial $f$ at a real zero $\x^*$. 
\begin{definition}
The SOS-invariant $\delta^{\,\textrm{\normalfont{sos}}}_{\x^*}(f)$ of a real polynomial $f$ at a real zero $\x^*$ is defined by $\delta^{\,\textrm{\normalfont{sos}}}_{\x^*}(f)=1$ in case of an ordinary singularity and 
\begin{align}\label{eq:sos-delta}
  \delta^{\,\textrm{\normalfont{sos}}}_{\x^*}(f)\ =\ \frac{m_{\x^*}^2}{4} + \sum_{\x'\,-\,\textrm{\normalfont{real}} } \delta^{\,\textrm{\normalfont{sos}}}_{\x'}(f')
\end{align}
in general, where again the sum is over real first order infinitely near points of $f=0$ at $\x^*$.
\end{definition}
It is easy to see that, for real $f$ and $\x^*$ one has that
\begin{align}
  \delta^{\,\textrm{sos}}_{\x^*}(f)\ \leq\ \delta^{\,\R}_{\x^*}(f)\ \leq\ \delta_{\x^*}(f)
\end{align}
and we set $\delta^{\,\textrm{sos}}_{\X^*}(F)=\delta^{\,\textrm{sos}}_{\x^*}(f)$, $\delta^{\,\R}_{\X^*}(F):=\delta^{\,\R}_{\x^*}(f)$ if $f\in \R[x_1,x_2]$ is the dehomogenization of a real ternary form $F\in \R[X_1,X_2,X_3]$ and $\x^*\in \mathbb{A}^2_{\R}$ is the affine representative of $\X^*\in \mathbb{P}^2_{\R}$.





\begin{remark}
  The notion of the SOS-invariant $\delta^{\,\textrm{\normalfont{sos}}}_{(0,0)}(f)$ makes sense, if one, more generally, considers  a convergent power series $f\in \R\{x_1,x_2\}$ at $(0,0)$.
\end{remark}

In Section \ref{sec:ternary} we show that for any coprime $H_1,H_2\in E(P,\X^*)$ in the local SOS-support of $P\in P_{n,d}$ we have that $\delta^{\,\textrm{sos}}_{\X^*}(P)\geq I_{\X^*}(H_1,H_2)$. We conjecture that the equality holds if $H_1,H_2\in E(P,\X^*)$ are chosen generically.

\section{Ternary forms}\label{sec:ternary}

The goal of this section is to prove Theorem \ref{thm:main} and investigate the question of stubbornness of ternary forms in general.
We start with some properties of the SOS-invariant that we introduced in Subsection \ref{sub:delta}. 

\subsection{The SOS-invariant and applications}

We show that the SOS-invariant  gives a lower bound on the intersection multiplicity of any two elements of the local SOS-support.

\begin{proposition}\label{propo:mult-bound}
  Let $P\in P_{3,d}$ be a nonnegative ternary form with a real zero $\X^*\in \mathbb{P}_{\mathbb{R}}^2$.
  Then for any $H_1, H_2\in E(P,\X^*)$ we have 
$
    I_{\X^*}(H_1,H_2) \geq   \delta^{\,\textrm{\normalfont{sos}}}_{\X^*}(P)
 $.
\end{proposition}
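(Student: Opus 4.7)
The plan is to prove the inequality by induction on the depth of the embedded resolution of $\{P=0\}$ at $\X^*$, running in parallel with the recursive definition \eqref{eq:sos-delta} of $\delta^{\,\textrm{sos}}_{\X^*}(P)$ and Noether's formula of Theorem \ref{thm:Noether}. Passing to an affine chart, I set $\X^* = \x^* = (0,0)$ and denote the dehomogenizations by $p, h_1, h_2$; if $h_1, h_2$ share a factor vanishing at $\x^*$ then $I_{\x^*}(h_1, h_2) = \infty$ and the bound is trivial, so I assume coprimality. The first step is a sharpening of Lemma \ref{lem:vanishing}: writing $m := m_{\x^*}(p)$, which is even by Remark \ref{rem:m-even}, evaluating $h_i^2 \leq C p$ along any ray $\x^* + t\bv$ gives $h_i = O(t^{m/2})$, hence $m_{\x^*}(h_i) \geq m/2$. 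In the base case $\x^*$ is an ordinary singularity of $p$, so $\delta^{\,\textrm{sos}}_{\x^*}(p) = 1$ by definition, both $h_i$ vanish at $\x^*$, and $I_{\x^*}(h_1, h_2) \geq 1$ since the curves are coprime.

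For the inductive step, Noether's formula combined with the multiplicity bound yields
\[
I_{\x^*}(h_1, h_2) = m_{\x^*}(h_1)\,m_{\x^*}(h_2) + \sum_{\x'} I_{\x'}(h_1', h_2') \;\geq\; \frac{m^2}{4} + \sum_{\x'} I_{\x'}(h_1', h_2'),
\]
where the sum ranges over common first order infinitely near points of $h_1, h_2$ at $\x^*$. Since $\delta^{\,\textrm{sos}}_{\x^*}(p) = m^2/4 + \sum_{\x' \text{ real of } p} \delta^{\,\textrm{sos}}_{\x'}(p')$, it suffices to prove the following \emph{structural claim}: at every real first order infinitely near point $\x'$ of $p$ at $\x^*$, both strict transforms $h_i'$ lie in the local SOS-support $E(p', \x')$. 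Granting this, the inductive hypothesis at $\x'$ (applicable by Lemma \ref{lem:nonnegative_blow}, which ensures $p'$ is nonnegative near $\x'$) gives $I_{\x'}(h_1', h_2') \geq \delta^{\,\textrm{sos}}_{\x'}(p')$, and the two sums line up. To verify the structural claim, pull back $h_i^2 \leq C p$ through the blow-up chart $(x_1, x_2) = (uv, v)$ to obtain $v^{2(m_{\x^*}(h_i) - m/2)}(h_i')^2 \leq C p'$ locally around $\x'$; when $m_{\x^*}(h_i) = m/2$ this is precisely $(h_i')^2 \leq C p'$, giving $h_i' \in E(p', \x')$, while inspection of lowest-order terms shows that every real projective zero of $p^{(m)}$ is a zero of $h_i^{(m/2)}$, so the real first order infinitely near points of $p$ are automatically common ones of $h_1, h_2$.

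The main obstacle is the case $m_{\x^*}(h_i) > m/2$, where the extra factor of $v$ in the pullback inequality prevents a useful restriction to the exceptional line. To overcome this I plan a perturbation argument exploiting that $E(P, \X^*)$ is a linear subspace (closed under sums via $(H+H')^2 \leq 2H^2 + 2H'^2$) and that it always contains an element of minimum multiplicity exactly $m/2$: writing the binary tangent cone $p^{(m)}(x_1, x_2) = \sum_\ell G_\ell^2$ as a sum of squares of binary forms of degree $m/2$ (Hilbert), the homogenized form $K := G_1(X_1, X_2)\cdot X_3^{d/2-m/2}$ lies in $E(P, \X^*)$ with $m_{\x^*}(K) = m/2$. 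Choosing such $K_1, K_2$ generically, the perturbations $\tilde H_i := H_i + tK_i$ still lie in $E$ and satisfy $m_{\x^*}(\tilde H_i) = m/2$ and coprimality for generic small $t \neq 0$. Upper semi-continuity of the local intersection multiplicity under specialization then gives $I_{\X^*}(H_1, H_2) \geq I_{\X^*}(\tilde H_1, \tilde H_2)$, reducing the general case to the minimum-multiplicity case already handled.
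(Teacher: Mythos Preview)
Your overall plan coincides with the paper's: induct on resolution depth, combine Noether's formula (Theorem~\ref{thm:Noether}) with the bound $m_{\x^*}(h_i)\ge m/2$ from Lemma~\ref{lem:vanishing}, and invoke the inductive hypothesis at each real infinitely near point of $p$. The paper's proof is terse at exactly the step you call the \emph{structural claim}---that the strict transforms $h_i'$ again lie in $E(p',\x')$---and you are right to single it out and to observe that it is immediate only when $m_{\x^*}(h_i)=m/2$.

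The genuine gap is in your handling of the case $m_{\x^*}(h_i)>m/2$. The perturbation-plus-semicontinuity strategy is reasonable, but your construction of the perturbing element is incorrect: taking $K=G_1(X_1,X_2)\,X_3^{d/2-m/2}$ with $p^{(m)}=\sum_\ell G_\ell^2$ need not land in $E(P,\X^*)$. Nonnegativity of the tangent cone $p^{(m)}-\varepsilon G_1^2$ does not force $p-\varepsilon G_1^2\ge 0$ near $\x^*$, because higher-order terms of $p$ can dominate along curves tangent to $\{p^{(m)}=0\}$. A concrete failure is $p=(x_1-x_2^2)^2+x_2^6$: here $m=2$, $p^{(2)}=x_1^2$, $G_1=x_1$, yet along $x_1=x_2^2$ one computes $p-\varepsilon x_1^2=x_2^6-\varepsilon x_2^4<0$ for small $x_2\neq 0$ and every $\varepsilon>0$, so $x_1\notin E(p,(0,0))$. (An element of $E$ with multiplicity $1$ does exist here, namely $x_1-x_2^2$, but it is not produced by your recipe.) Thus the existence of some $K\in E(P,\X^*)$ with $m_{\X^*}(K)=m/2$---on which your reduction hinges---remains unproved, and without it the perturbation step does not go through.
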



\begin{proof}
  We perform an induction on the number of blow-ups needed to reach a round zero. 
  If $\X^*=[0:0:1]\in \RP^2$ is a round zero of $P$, then $p(x_1,x_2)=P(x_1,x_2,1)=a x_1^2+2b x_1x_2+c x_2^2+\dots$ does not have real tangents (as the quadratic form $a x_1^2+2b x_1 x_2+ c x_2^2$ must be positive definite) and we have $\delta_{\mathbf 0}^{\,\textrm{\normalfont{sos}}}(p)=1$.
  Any $H_1, H_2\in E(P,\X^*)$ must vanish at $\X^*$ (see Lemma \ref{lem:vanishing}) and therefore $I_{\X^*}(H_1,H_2)\geq 1=\delta_{\mathbf{0}}^{\,\textrm{\normalfont{sos}}}(p)$.
  In general, we have by Theorem \ref{thm:Noether} that
  \begin{align}\label{eq:local123}
I_{\X^*}(H_1,H_2)\ =\    I_{{\bf 0}}(h_1,h_2)\ =\ m_{\mathbf{0}}(h_1)m_{\mathbf{0}}(h_2)+\sum_{\x'} I_{\x'}(h_1',h_2'),
  \end{align}
  where the sum is over all common first order infinitely near points $\x'$ of $h_1=0$ and $h_2=0$ at $\mathbf{0}$.
  Because $H_1, H_2$ are in $E(P,\X^*)$, we have by Lemma \ref{lem:vanishing} that $m_{\mathbf{0}}(h_1), m_{\mathbf{0}}(h_2)\geq m_{\mathbf{0}}(p)/2$.
Denote by $T(p,\mathbf{0})$ the set of real infinitely near points $\x'$ of $p=0$ at $\mathbf{0}$. Thus, disregarding all $\x'$ in \eqref{eq:local123} that do not belong to $T(p,\mathbf{0})$, yields
  \begin{align*}
    I_{\mathbf{0}}(h_1,h_2)\ &\geq\ \left(\frac{m_{\mathbf{0}}(p)}{2}\right)^2+\sum_{\x'\,\in\, T(p,\, \mathbf{0})}  I_{\x'}(h_1',h_2')\ \geq\ \frac{1}{4}m_{\mathbf{0}}(p)^2+\sum_{\x'\,\in\, T(p,\, \mathbf{0})} \delta_{\x'}^{\,\textrm{sos}}(p')\ =\ \delta^{\,\textrm{\normalfont{sos}}}_{\mathbf{0}}(p),
  \end{align*}
  where the last bound follows by the induction step, see \cite[Thm. 1.43]{Kollar}.
\end{proof}

One of the key properties of the SOS-invariant is that it behaves well under taking powers.

\begin{proposition}\label{propo:mult-powers}
  Let $P\in P_{3,d}$ be a nonnegative ternary form that has a real zero at $\X^*\in \mathbb{P}_{\mathbb{R}}^2$.
  Then for any $k\geq 1$ we have $  \delta^{\,\textrm{\normalfont{sos}}}_{\X^*}(P^k)=k^2\delta^{\,\textrm{\normalfont{sos}}}_{\X^*} (P)$. 
\end{proposition}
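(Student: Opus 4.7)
The plan is an induction on the depth of the resolution of singularities of $P=0$ at $\X^*$, relying on three elementary facts about how multiplicities, tangent cones, and strict transforms behave under taking $k$-th powers.

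First I would record the following three observations. (a) If $f$ denotes a dehomogenization of $P$ with $\x^*$ the affine representative of $\X^*$, then $m_{\x^*}(f^k) = k\,m_{\x^*}(f)$, since the lowest-degree part of $f^k$ is the $k$-th power of the lowest-degree part of $f$. (b) Writing the initial form of $f$ at $\x^*$ as a product of linear factors, the initial form of $f^k$ is the same product raised to the power $k$; in particular, the set of tangent lines, and hence the set of real first-order infinitely near points, is identical for $f=0$ and for $f^k=0$ at $\x^*$. (c) If $f(x_1'x_2,x_2) = x_2^{m_{\x^*}(f)} f'(x_1',x_2)$ gives the strict transform $f'$ of $f$ in a local chart of the blow-up, then $f^k(x_1'x_2,x_2) = x_2^{k\,m_{\x^*}(f)}\,(f')^k(x_1',x_2)$, so the strict transform of $f^k$ in that chart equals $(f')^k$. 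By Lemma \ref{lem:nonnegative_blow}, $f'$ remains nonnegative in a neighborhood of each real first-order infinitely near point $\x'$, so the inductive hypothesis will apply to $(P')^k$ at $\x'$.

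For the base case, suppose $\X^*$ is a round zero of $P$, so that $m_{\X^*}(P)=2$ and the tangent cone of $P$ at $\X^*$ is a positive definite quadratic form. Then $\delta^{\,\textrm{\normalfont{sos}}}_{\X^*}(P)=1$ by definition. For $P^k$, fact (a) gives $m_{\X^*}(P^k)=2k$, and by (b) the tangent cone of $P^k$ at $\X^*$ is the $k$-th power of a positive definite quadratic form, which has no real zeros; hence $P^k=0$ has no real first-order infinitely near points at $\X^*$. Plugging into \eqref{eq:sos-delta} yields $\delta^{\,\textrm{\normalfont{sos}}}_{\X^*}(P^k) = (2k)^2/4 + 0 = k^2 = k^2\,\delta^{\,\textrm{\normalfont{sos}}}_{\X^*}(P)$.

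For the inductive step, assume the identity holds at every real first-order infinitely near point $\x'$ of $P=0$ at $\X^*$ for the strict transform $P'$ (the resolution process terminates by \cite[Thm.\ 1.43]{Kollar}, so we may induct on the number of blow-ups needed). By (b), the real first-order infinitely near points of $P^k=0$ at $\X^*$ are exactly those of $P=0$ at $\X^*$; by (c), the strict transform of $P^k$ at such a point $\x'$ is $(P')^k$. Therefore
\begin{align*}
\delta^{\,\textrm{\normalfont{sos}}}_{\X^*}(P^k) \ &=\ \frac{m_{\X^*}(P^k)^2}{4} + \sum_{\x'\,-\,\textrm{\normalfont{real}}} \delta^{\,\textrm{\normalfont{sos}}}_{\x'}\bigl((P')^k\bigr) \\
&=\ \frac{(k\,m_{\X^*}(P))^2}{4} + \sum_{\x'\,-\,\textrm{\normalfont{real}}} k^2\,\delta^{\,\textrm{\normalfont{sos}}}_{\x'}(P') \\
&=\ k^2\left(\frac{m_{\X^*}(P)^2}{4} + \sum_{\x'\,-\,\textrm{\normalfont{real}}} \delta^{\,\textrm{\normalfont{sos}}}_{\x'}(P')\right)\ =\ k^2\,\delta^{\,\textrm{\normalfont{sos}}}_{\X^*}(P).
\end{align*}

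The only potential subtlety, and the main point to verify carefully, is step (b): that raising $f$ to the $k$-th power does not create new tangent directions nor real infinitely near points beyond those of $f$. This is immediate from the factorization of initial forms, but I would spell it out explicitly since the whole induction rests on the fact that the two sums in \eqref{eq:sos-delta} for $P$ and for $P^k$ are indexed by the same set of real points.
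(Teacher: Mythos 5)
Your proof is correct and takes essentially the same approach as the paper: induction on the number of blow-ups, using the facts that multiplicity scales by $k$, that the strict transform commutes with $k$-th powers (the paper writes $(p')^k=(p^k)'$), and that the recursion terminates at a real zero with no real tangents. The paper's proof is terser and leaves your observation (b) — that raising to the $k$-th power preserves the set of real tangent directions and hence the set of real infinitely near points — implicit, so your more explicit treatment of that step is a reasonable elaboration rather than a departure.
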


\begin{proof}
  It suffices to prove the claim for $\X^*=[0:0:1]$.
  As usual, denote by $p(x_1,x_2)=P(x_1,x_2,1)$ the dehomogenization of $P$.
  The strict transform $p'$ of $p$ satisfies $(p')^k=(p^k)'$.
Performing induction on the number of blow-ups needed to reach a real zero without real tangents, it is enough to prove the claim for the latter case.
But in that case $\delta^{\,\textrm{\normalfont{sos}}}(p^k)=\frac{1}{4}m_{\mathbf{0}}(p^k)^2=\frac{k^2}{4} m_{\mathbf{0}}(p)^2$, since the multiplicity is multiplied by $k$ under taking $k$-th powers.
\end{proof}

\subsection{Ternary Sextics}\label{sub:main}


Before passing to the proof of Theorem \ref{thm:main} on ternary sextics, we discuss some of their properties.
We start with a folklore result (see, e.g., \cite[Thm. 7.1]{Reznick2007OnHC}).

\begin{lemma}\label{lem:folklore}
  If $P\in P_{3,6}$ is reducible over $\C$, then it is a sum of squares.
\end{lemma}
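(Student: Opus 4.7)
The plan is to factor $P$ over $\R$ as $P = M\cdot S^2$ with $M$ squarefree (the product of distinct real-irreducible factors of $P$ appearing with odd multiplicity) and $S\in\R[X_1,X_2,X_3]$, then to deduce that $M \geq 0$ and that each of its irreducible factors is semidefinite of even degree, and finally to handle the remaining case analysis using Hilbert's theorem (every nonnegative ternary form of degree at most $4$ is a sum of squares) together with the reducibility hypothesis.

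First I would observe that $M = P/S^2$ is nonnegative on the dense open set $\{S\neq 0\}$, hence nonnegative on all of $\R^3$ by continuity. Next, I would argue that each real-irreducible factor $R$ of $M$ cannot change sign on $\R^3$: if it did, then at every smooth real zero of $R$ the complementary product $M/R$ would be forced to vanish by continuity, and since the smooth real zeros of a real-irreducible form that changes sign are Zariski-dense in the complex curve $\cV(R)\subset\PP_\C^2$, this would force $R\mid M/R$ by the irreducibility of $R$, contradicting squarefreeness of $M$. Thus each factor $R$ is sign-definite and, after absorbing signs, may be taken nonnegative. In particular $\deg R$ must be even, as an odd-degree homogeneous form satisfies $R(-\X) = -R(\X)$ and automatically changes sign.

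The remaining step is a case split on $\deg M\in\{0,2,4,6\}$ (which is even since $\deg P = 6$). For $\deg M \leq 4$, $M$ is a nonnegative ternary form of degree at most $4$ and so a sum of squares by Hilbert's theorem. For $\deg M = 6$, the possible partitions of $6$ into even degrees of real-irreducible factors are $(2,2,2),(2,4)$, and $(6)$. In the first two subcases each factor of $M$ is a nonnegative ternary form of degree at most $4$, so SOS by Hilbert, and $M$ is therefore a product of SOS; in the last subcase $S$ is constant and $M$ is itself real-irreducible of degree $6$, so $P$ is proportional to $M$. At this point I would invoke the hypothesis that $P$ (and hence $M$) is reducible over $\C$: a real-irreducible ternary sextic that is reducible over $\C$ must split as a complex-conjugate pair $M = C\bar C$ for some $C\in\C[X_1,X_2,X_3]$, and writing $C = U + iV$ with $U,V\in\R[X_1,X_2,X_3]$ real cubics yields $M = U^2+V^2$, an SOS. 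In every subcase $M$ is SOS, so $P = M\cdot S^2$ is SOS via the identity $(\sum_i a_i^2)(\sum_j b_j^2) = \sum_{i,j}(a_ib_j)^2$.

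The main obstacle is precisely this last subcase, where $M$ is a single real-irreducible sextic: Hilbert's theorem no longer applies, and we need the hypothesis of complex reducibility to produce the conjugate-pair factorization $M = C\bar C$ and thereby express $M$ as a sum of two squares of cubic forms.
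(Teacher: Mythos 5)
Your proof is correct and reaches the conclusion by a genuinely different route from the paper's. The paper's proof is a short three-way case split that leans on outside citations: if $P$ is $\R$-irreducible (hence $\C$-reducible by hypothesis) it is a sum of two squares by Lemma~3.1 of Choi--Lam--Reznick (1987, \emph{Even symmetric sextics}); otherwise one picks a proper real factor $Q$ and observes that either $Q$ (or $-Q$) is semidefinite of degree $2$ or $4$ --- in which case $Q$ and $P/Q$ are both SOS by Hilbert --- or $Q$ is sign-indefinite, in which case $P$ has infinitely many real zeros (Prop.~2.5 of the same reference) and is then SOS by Thm.~3.7 there. The substantial outside input in the paper's version is that last theorem: a nonnegative ternary sextic whose real zero set contains a curve is a sum of squares.

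Your version avoids that theorem entirely. By peeling off the full-square part $S^2$, reducing to the squarefree $M$, and running the Zariski-density argument to show that every $\R$-irreducible factor of $M$ is semidefinite (hence of even degree), you reduce the problem to a finite list of partitions of $\deg M$; all cases except $\deg M = 6$ with $M$ real-irreducible are dispatched by Hilbert's theorem alone, and that final case is settled by re-deriving the conjugate-pair factorization $M = C\bar C = U^2 + V^2$ from scratch (this is the content of the CLR1 Lemma~3.1 the paper cites). What your approach buys is a self-contained, more elementary argument; what the paper's buys is brevity, at the price of invoking a nontrivial structure theorem for nonnegative ternary sextics vanishing along a curve. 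Both are valid.
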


\begin{proof}
If $P\in P_{3,6}$ is reducible over $\C$ but irreducible over $\R$, it is a sum of two squares by \cite[Lemma 3.1]{CLR1}.
Otherwise, let $Q\in \R[X_1,X_2,X_3]$ be a real factor of $P$.
If either $Q$ or $-Q$ (say, $Q$) is a nonnegative form of degree $d\in \{2,4\}$, then both $Q\in P_{3,d}=\Sigma_{3,d}$ and $P/Q\in P_{3,6-d}=\Sigma_{3,6-d}$ are sums of squares by a result of Hilbert \cite{Hilbert1888berDD}.
If the real $Q$ is sign indefinite, we obtain that $Q$ and hence also $P$ has infinitely many real zeros by \cite[Prop. 2.5]{CLR1}.
But then \cite[Thm. 3.7]{CLR1} implies that $P\in \Sigma_{3,6}$ is a sum of squares.
\end{proof}

By considering $X_1^{2k}M(X_1,X_2,X_3)$, we see that this result is false for degrees greater than six, see \cite{CL2}. The following result can also be derived from \cite[Thm. 7.9]{Choi1980RealZO}, which appeared in earlier works of Djoković \cite{DJOKOVIC1976359}, Yakubovich \cite{Yakubovich}, Popov  \cite{popov1973hyperstability}, Rosenblum and Rovnyak \cite{Rosenblum1971TheFP}.
However, we give an alternative proof.

\begin{lemma}\label{lem:scroll}
  Let $P\in P_{3,6}$ be a nonnegative ternary sextic with a real zero $\X^*\in \PP_{\R}^2$ of multiplicity $m_{\X^*}(P)$ at least $4$. Then $P$ is a sum of squares.     
\end{lemma}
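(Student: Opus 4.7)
After a real linear change of coordinates we may assume $\X^*=[0:0:1]$. Since $P$ vanishes to order $\ge 4$ there, every monomial of $P$ has $Z$-degree at most $2$, so
\[P(X,Y,Z)\ =\ A(X,Y)\,Z^2+B(X,Y)\,Z+C(X,Y)\]
for binary forms $A,B,C$ of degrees $4,5,6$. By Remark~\ref{rem:m-even}, $m_{\X^*}(P)\in\{4,6\}$. If $m_{\X^*}(P)=6$ then $A\equiv B\equiv 0$, and $P=C$ is a nonnegative binary sextic, hence SOS by Hilbert's theorem on nonnegative binary forms.

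Assume now $m_{\X^*}(P)=4$, so $A\not\equiv 0$. Reading $P$ as a univariate quadratic in $Z$, its nonnegativity on $\R^3$ is equivalent to the pointwise conditions $A\ge 0$, $C\ge 0$ and $4AC-B^2\ge 0$ on $\R^2$; each is a sum of two squares by Hilbert. The plan is to produce a decomposition
\[P\ =\ \sum_i(\alpha_iZ+\beta_i)^2+R(X,Y)\]
with $\alpha_i\in F_{2,2}$, $\beta_i\in F_{2,3}$ and $R$ a nonnegative binary sextic (automatically SOS by Hilbert, splitting off as extra pure squares in $X,Y$). Equating $Z$-coefficients, this reduces to finding $\alpha_i,\beta_i$ satisfying $A=\sum_i\alpha_i^2$, $B=2\sum_i\alpha_i\beta_i$, and $\sum_i\beta_i^2\le C$ pointwise on $\R^2$.

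I would construct such $(\alpha_i,\beta_i)$ by a case analysis on the real factorization of $A$. The bound $B^2\le 4AC$ forces $L^m\mid B$ at every real zero of $A$ of multiplicity $2m$. When $A=\alpha^2$ is the square of a binary quadratic $\alpha$ whose zeros are all real (i.e.\ $A=L^4$ or $A=L_1^2L_2^2$), this gives $\alpha\mid B$, so writing $B=\alpha B'$ yields the completed square $P=(\alpha Z+B'/2)^2+(C-B'^2/4)$; here $C-B'^2/4\ge 0$ follows by evaluating $P$ at $Z=-B'/(2\alpha)$, so $P$ is SOS. When $A$ contains a positive definite irreducible quadratic factor, i.e.\ $A=L^2Q$, $A=Q_1Q_2$, or $A=Q^2$, one instead chooses a richer SOS representation $A=\sum_{i=1}^r\alpha_i^2$ with $r\ge 2$ such that the ideal $(\alpha_1,\ldots,\alpha_r)\subset\R[X,Y]$ contains $B/2$; for example, $A=L^2Q=(L\mu)^2+(L\nu)^2$ with $Q=\mu^2+\nu^2$ gives $B/2\in(L\mu,L\nu)$ because $L\mid B$ and $\mu,\nu$ are coprime.

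The main obstacle is controlling the residue in these harder cases. Having solved $B=2\sum_i\alpha_i\beta_i$ in polynomials, one must verify that the free parameters in the solution can be chosen so that $\sum_i\beta_i^2\le C$ pointwise on $\R^2$. The Lagrange identity gives
\[A\cdot\Big(C-\sum_i\beta_i^2\Big)\ =\ \tfrac{1}{4}\big(4AC-B^2\big)-\sum_{i<j}(\alpha_i\beta_j-\alpha_j\beta_i)^2,\]
so the task is to choose $\beta_i$'s making the right-hand side a nonnegative polynomial divisible by $A$. Arranging these two conditions simultaneously within the affine space of solutions of the linear system is where I expect most of the technical work to lie.
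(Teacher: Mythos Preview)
Your reduction to $P=A(X,Y)Z^2+B(X,Y)Z+C(X,Y)$ quadratic in $Z$, with $A\ge 0$, $C\ge 0$ and $4AC-B^2\ge 0$, is correct, and the case $A=\alpha^2$ with $\alpha$ having only real zeros is complete. But when $A$ has a positive-definite quadratic factor you have only \emph{reformulated} the problem via the Lagrange identity, not solved it: you do not show that the free parameters in the $\beta_i$'s can actually be chosen so that $C-\sum_i\beta_i^2\ge 0$, and that is the entire content of the lemma in those cases. What you are heading toward is the classical fact that a PSD form quadratic in one variable is SOS, equivalently a matrix Fej\'er--Riesz factorization of the PSD $2\times 2$ matrix polynomial $\left(\begin{smallmatrix}A&B/2\\ B/2&C\end{smallmatrix}\right)$ over $\R[t]$. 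The paper in fact notes this alternative route, citing \cite[Thm.~7.9]{Choi1980RealZO} and the work of Djokovi\'c, Yakubovich, Popov, and Rosenblum--Rovnyak; those proofs use analytic/algebraic factorization of matrix polynomials rather than a root-type case split on $A$, and completing your program by hand would essentially re-derive a special case of them.

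The paper's own proof is entirely different and avoids this machinery. Since $m_{[0:0:1]}(P)\ge 4$, the Newton polytope of $p(x_1,x_2)=P(x_1,x_2,1)$ lies in the trapezoid $\Delta=\textrm{conv}((4,0),(0,4),(6,0),(0,6))$, so $p$ can be written as a quadratic form in the monomials $\x^{\alpha}$ with $\alpha\in\Delta/2$. The projective toric surface parameterized by these monomials is a variety of minimal degree \cite[Example~6.2]{BSV}, and by \cite[Thm.~1.1]{BSV} every nonnegative quadric on such a variety is a sum of squares of linear forms in its homogeneous coordinate ring; pulling back along the monomial map yields an SOS decomposition of $p$. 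This argument is uniform and needs no case analysis on $A$.
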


\begin{proof}
  Without loss of generality we can assume that $P$ has a zero at $\X^*=[0:0:1]$.
  Since we assume that its multiplicity at $\X^*$ is at least $4$, the Newton polytope of $P(x_1,x_2)=p(x_1,x_2,1)$ is contained in a trapezoid $\Delta:=\textrm{conv}((4,0),(0,4),(6,0),(0,6))$  on Figure \ref{fig:New1}.
  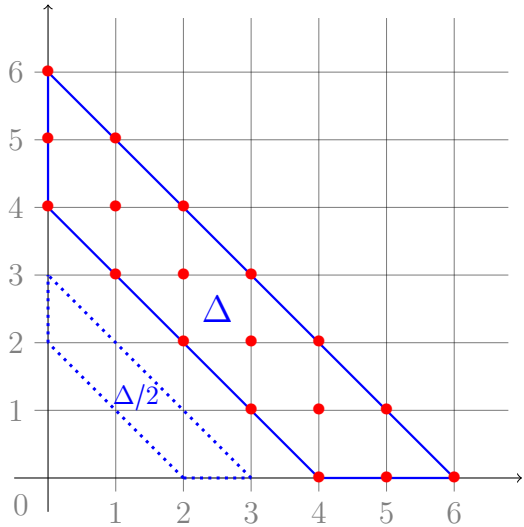
\begin{figure}[h!]
    \centering
  \begin{tikzpicture}[xscale=0.09,yscale=0.09,domain=0.125:220,samples=400]
    \node[opacity=0.5] at (-4,-4) {$0$};

    \draw[->] (-5,0) -- (70,0) node[below] {};
    \draw[->] (0,-5) -- (0,70) node[left] {};
    \foreach \i in {1,2,...,6} {
        \draw[opacity=0.5] (\i*10,68) -- (\i*10,-2) node[below] {$\i$};
    }
    \foreach \i in {1,2,...,6} {
        \draw[opacity=0.5] (68,\i*10) -- (-2,\i*10) node[left] {$\i$};
    }
    \draw[line width = 0.3mm, blue] (60,0) -- (0,60);
    \draw[line width = 0.3mm, blue] (40,0) -- (0,40);
    \draw[line width = 0.3mm, blue] (40,0) -- (60,0);
    \draw[line width = 0.3mm, blue] (0,40) -- (0,60);

    \node[blue] at (25,25) {{\large$\Delta$}};

    \draw[line width = 0.4mm, dotted, blue] (30,0) -- (0,30);
    \draw[line width = 0.4mm, dotted, blue] (20,0) -- (0,20);
    \draw[line width = 0.4mm, dotted, blue] (20,0) -- (30,0);
    \draw[line width = 0.4mm, dotted, blue] (0,20) -- (0,30);

    \node[blue] at (13,12) {{\footnotesize $\Delta/2$}};

    \draw[red] (40,0) node {$\bullet$};
    \draw[red] (50,0) node {$\bullet$};
    \draw[red] (60,0) node {$\bullet$};
    \draw[red] (30,10) node {$\bullet$};
    \draw[red] (40,10) node {$\bullet$};
    \draw[red] (50,10) node {$\bullet$};
    \draw[red] (20,20) node {$\bullet$};
    \draw[red] (30,20) node {$\bullet$};
    \draw[red] (40,20) node {$\bullet$};
    \draw[red] (10,30) node {$\bullet$};
    \draw[red] (20,30) node {$\bullet$};
    \draw[red] (30,30) node {$\bullet$};
    \draw[red] (0,40) node {$\bullet$};
    \draw[red] (10,40) node {$\bullet$};
    \draw[red] (20,40) node {$\bullet$};
    \draw[red] (0,50) node {$\bullet$};
    \draw[red] (10,50) node {$\bullet$};
    \draw[red] (0,60) node {$\bullet$};
  \end{tikzpicture}    
    \caption{The (maximal) support and the Newton polytope of a degree $6$ \\ polynomial having a zero of multiplicity $4$ at $(0,0)$.}
    \label{fig:New1}
  \end{figure}
  Let us consider vectors $\z=(z_{\Vector{\alpha}})_{\Vector{\alpha}\in \Delta/2}$ and $m_{\Delta/2}(\x)=\left(\x^{\Vector{\alpha}}\right)_{\Vector{\alpha}\in \Delta/2}$ of variables, respectively, monomials in $\x=(x_1,x_2)$, indexed by lattice points in $\Delta/2$. We also regard $m_{\Delta/2}$ as a monomial map $\x\mapsto \left(\x^{\Vector{\alpha}}\right)_{\Vector{\alpha}\in \Delta/2}$, the projective closure of whose image is a toric variety $X:=\overline{\{ m_{\Delta/2}(\x)\,:\, \x\in \mathbb{A}^2_{\,\mathbb{C}}\}}\subset \mathbb{P}_{\mathbb{C}}^{\vert\Delta/2\vert-1}$.
  The polynomial $p\in \R[x_1,x_2]$ (having support  contained in $\Delta$) can be regarded as a quadratic form $\z^{\mathsf T} Q\, \z$ restricted to (an open dense subset of) $X$ (cf. $p(\x)=m_{\Delta/2}(\x)^{\mathsf T} Q \,m_{\Delta/2}(\x)$). In particular, the quadratic form $\z^{\mathsf T} Q\,\z$ is nonnegative in the homogeneous coordinate ring $\R[X]:=\R[\z]\,\big/\,\mathcal{I}(X)$ of $X$.
By \cite[Example 6.2]{BSV}, $X$ is a \emph{variety of minimal degree}. Then \cite[Thm. 1.1]{BSV} implies that $\z^{\mathsf T} Q\, \z$ is a sum of squares in $\R[X]$, that is, $\z^{\mathsf T}Q \z=\sum_{i=1}^rH_i(\z)^2 \mod \mathcal{I}(X)$ for some linear forms $H_i\in \R[\z]$. Thus, $p(\x)=m_{\Delta/2}(\x)^{\mathsf T} Q\, m_{\Delta/2}(\x)=\sum_{i=1}^rH_i(m_{\Delta/2}(\x))^2$ is a sum of squares in $\R[\x]$.
\end{proof}

With the help of the above lemma we can demonstrate that for forms in $\Delta_{3,6}$, the delta invariant \eqref{eq:delta} agrees with the SOS-invariant defined in Subsection \ref{sub:delta}. This is a particular case of the following result.


\begin{lemma}\label{lem:delta=delta}
  Let $\X^*\in \mathbb{P}^2_{\mathbb{R}}$ be a real zero of $P\in P_{3,d}$ with $m_{\X^*}(P)=2$. Then $\delta_{\X^*}(P)=   \delta_{\X^*}^{\,\textrm{\normalfont{sos}}}(P)$.
\end{lemma}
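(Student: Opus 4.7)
The plan is to prove the equality by strong induction on the number of successive blow-ups needed to resolve the singularity of $P$ at $\X^*$, which is finite by \cite[Thm. 1.43]{Kollar}. Since both invariants are defined locally, I would work with a dehomogenization $p(x_1,x_2) := P(x_1,x_2,1)$ near $\x^* \in \mathbb{A}^2_{\,\R}$. For the base case, if $\x^*$ is an ordinary singularity of $p = 0$, then by the conventions in the definitions $\delta_{\x^*}(p) = 1 = \delta^{\,\textrm{sos}}_{\x^*}(p)$ and there is nothing to prove.

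For the inductive step, I would first exploit nonnegativity to pin down the local structure of a non-ordinary singularity. Since $p \geq 0$ near $\x^*$ and $m_{\x^*}(p) = 2$, the degree-$2$ homogeneous part $Q(x_1,x_2)$ of $p$ at $\x^*$ is a nonzero, nonnegative real binary quadratic form. Non-ordinariness forces the Hessian of $p$ at $\x^*$ to have rank at most $1$, so $Q$ has rank exactly $1$ and must take the shape $Q = c\cdot L(x_1,x_2)^2$ for some $c > 0$ and real linear form $L$. The tangent cone at $\x^*$ is then the single real double line $\{L = 0\}$, giving a unique first-order infinitely near point $\x'$ of $p = 0$ at $\x^*$, and it is real.

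Next I would analyze the strict transform. By Lemma \ref{lem:nonnegative_blow}, $p'$ is nonnegative in a neighborhood of $\x'$, and since multiplicities do not increase under point blow-ups, $m_{\x'}(p') \leq 2$; by Remark \ref{rem:m-even} applied at $\x'$, this multiplicity is even, hence lies in $\{0, 2\}$. If it equals $0$, then $\x'$ is a smooth point and $\delta_{\x'}(p') = 0 = \delta^{\,\textrm{sos}}_{\x'}(p')$. If it equals $2$, the inductive hypothesis applies to $p'$ at $\x'$ (which is resolved in strictly fewer blow-ups) and yields $\delta_{\x'}(p') = \delta^{\,\textrm{sos}}_{\x'}(p')$. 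In either case, plugging into \eqref{eq:delta} and \eqref{eq:sos-delta} and noting that $\tfrac{m(m-1)}{2} = 1 = \tfrac{m^2}{4}$ for $m = 2$, while the sum in \eqref{eq:delta} over all (complex) first-order infinitely near points collapses to the single real point $\x'$ matching the real-only sum in \eqref{eq:sos-delta}, one obtains
\begin{align*}
\delta_{\x^*}(p)\ =\ 1 + \delta_{\x'}(p')\ =\ 1 + \delta^{\,\textrm{sos}}_{\x'}(p')\ =\ \delta^{\,\textrm{sos}}_{\x^*}(p).
\end{align*}

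The critical step, and the heart of the argument, is the rigidity observation that nonnegativity at a non-ordinary double point forces the tangent cone to be a perfect square of a real linear form. This alignment of complex and real infinitely near points, together with the numerical coincidence $\tfrac{m(m-1)}{2} = \tfrac{m^2}{4}$ at $m=2$, is precisely why the two recursive formulas return identical values in this setting. I expect no serious technical obstacle beyond this observation and the routine verification, using Lemma \ref{lem:nonnegative_blow} and Remark \ref{rem:m-even}, that the dichotomy $m_{\x'}(p') \in \{0,2\}$ propagates cleanly so the induction terminates.
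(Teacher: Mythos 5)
Your proof is correct and follows the same inductive scheme as the paper, with one welcome addition: you spell out the rigidity observation that nonnegativity at a non-ordinary double point forces the tangent cone to be $cL^2$ for a real linear form $L$ (hence a unique, and real, first-order infinitely near point), which the paper's proof uses implicitly without comment. One small nit: the case $m_{\x'}(p')=0$ is vacuous, since a first-order infinitely near point lies on the strict transform by definition, so the (even, bounded by $2$) multiplicity is exactly $2$; but this does not affect your conclusion.
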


\begin{proof}
  Assume without loss of generality that $\X^*=[0:0:1]$ and consider the dehomogenized polynomial $p(X_1,X_2)=P(x_1,x_2,1)$.

If $\x'$ is a real first order infinitely near point of $p=0$ at $\x^*=\bf 0$, formula \eqref{eq:delta} implies that $\delta_{\x^*}(p)=1+\delta_{\x'}(p')$ (recall that $m_{\X^*}(P)=2$).


Also, by its definition, the invariant $\delta^{\,\textrm{\normalfont{sos}}}_{\x^*}(p)$ decreases by $1$ after a blow-up, that is, $\delta^{\,\textrm{\normalfont{sos}}}_{\x^*}(p)=1+\delta^{\,\textrm{\normalfont{sos}}}_{\x'}(p')$.
Moreover, $p'$ is of multiplicity at most $2$ at $\x'$.
After a finitely many blow-ups we arrive at an ordinary singularity (round zero) for which we have $\delta^{\,\textrm{\normalfont{sos}}}_{\x'}(p)=1=\delta_{\x'}(p)$. The claim follows by induction on the number of blow-ups.
\end{proof}

We now show that for extreme sextics in $\cE(P_{3,6})\cap \Delta_{3,6}$ the half-degree invariant \eqref{eq:hd} agrees with the delta-invariant \eqref{eq:delta} and hence, by Lemma \ref{lem:delta=delta}, also with the SOS-invariant.

\begin{lemma}\label{lem:Bruce's_delta=delta}
  Let $P\in \cE(P_{3,6})\setminus \Sigma_{3,6}$.
  Then $\delta^{\,\textrm{\normalfont{hd}}}(P)=\delta(P)=\delta^{\,\textrm{\normalfont{sos}}}(P)=10$. 
\end{lemma}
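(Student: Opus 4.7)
The plan is to combine Lemma \ref{lem:scroll} and Lemma \ref{lem:delta=delta} with the classical description of the real zero structure of extreme non-SOS ternary sextics, handling the equality $\delta^{\,\textrm{sos}}=\delta=10$ first and $\delta^{\,\textrm{hd}}=10$ afterwards.

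First I would observe that every real zero $\X^*$ of $P$ has multiplicity exactly $2$: it is even by Remark \ref{rem:m-even} and cannot exceed $3$ by Lemma \ref{lem:scroll}, since $P \notin \Sigma_{3,6}$. Consequently Lemma \ref{lem:delta=delta} applies at every real zero and yields $\delta_{\X^*}(P) = \delta^{\,\textrm{sos}}_{\X^*}(P)$. By Lemma \ref{lem:folklore}, $P$ is irreducible over $\C$, so $\cV(P)\subset \PP_{\C}^2$ is an irreducible plane sextic. I would then invoke the structural description of extreme non-SOS ternary sextics (see \cite{Reznick2007OnHC, blekherman_hauenstein_ottem_ranestad_sturmfels_2012}) to conclude that $\cV(P)$ is rational and all its singularities are real. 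By the genus-degree formula (Remark \ref{rem:gd}), $\delta(P) = \binom{5}{2} = 10$, and since all singularities of $\cV(P)$ are real, $\delta(P) = \sum_{\X^* \textrm{ real}} \delta_{\X^*}(P) = \delta^{\,\textrm{sos}}(P) = 10$.

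To establish $\delta^{\,\textrm{hd}}(P) = 10$, I would prove two inequalities. For the lower bound $\delta^{\,\textrm{hd}}(P) \geq 10$, I use extremality: if $H\in \bigcap_{\X^*} E(P,\X^*)$, then by a standard compactness argument on $\PP_{\R}^2$, the inequality $P-\varepsilon H^2\geq 0$ must hold globally for some $\varepsilon>0$, and extremality of $P \in \cE(P_{3,6})$ forces $H^2$ to be proportional to $P$, hence $H=0$ (as $P$ is not a square by Lemma \ref{lem:folklore}). Thus $\bigcap_{\X^*} E(P,\X^*)=\{0\}$ inside the $10$-dimensional space $F_{3,3}$, forcing $\sum_{\X^* \textrm{ real}} \delta^{\,\textrm{hd}}_{\X^*}(P) \geq 10$. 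For the upper bound $\delta^{\,\textrm{hd}}(P) \leq 10$, I would argue pointwise that $\delta^{\,\textrm{hd}}_{\X^*}(P) \leq \delta^{\,\textrm{sos}}_{\X^*}(P)$ using Proposition \ref{propo:mult-bound}: choosing two generic elements of $E(P,\X^*)$, their local intersection multiplicity at $\X^*$ is at least $\delta^{\,\textrm{sos}}_{\X^*}(P)$, and by the standard identification of the local SOS-support with (the pullback of) a length-$\delta^{\,\textrm{sos}}_{\X^*}(P)$ ideal in the local ring $\cO_{\X^*}$, this controls the codimension of $E(P,\X^*)$. Summing over real zeros gives $\delta^{\,\textrm{hd}}(P) \leq \delta^{\,\textrm{sos}}(P) = 10$.

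The main obstacle is the structural input that an extreme non-SOS ternary sextic cuts out a rational curve whose singularities are all real; this is a deep fact about the geometry of the boundary of $P_{3,6}$ that I would quote directly rather than reprove. A secondary technical issue is making the pointwise inequality $\delta^{\,\textrm{hd}}_{\X^*}(P) \leq \delta^{\,\textrm{sos}}_{\X^*}(P)$ rigorous, which requires identifying $E(P,\X^*)$ precisely with the preimage in $F_{3,3}$ of an ideal of colength $\delta^{\,\textrm{sos}}_{\X^*}(P)$ in $\cO_{\X^*}$; this can likely be done by an induction on the resolution of singularities parallel to the recursive definition \eqref{eq:sos-delta}.
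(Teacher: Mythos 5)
Your overall structure matches the paper's proof: both arguments run a squeeze between the extremality lower bound $\delta^{\,\textrm{hd}}(P)\geq 10$, the genus--degree computation $\delta(P)=10$ (relying on rationality of $\cV(P)$ and reality of its singularities, quoted from \cite{blekherman_hauenstein_ottem_ranestad_sturmfels_2012}), and a pointwise upper bound relating $\delta^{\,\textrm{hd}}_{\X^*}$ to $\delta_{\X^*}=\delta^{\,\textrm{sos}}_{\X^*}$, with Lemmas \ref{lem:scroll} and \ref{lem:delta=delta} supplying $m_{\X^*}(P)=2$ and $\delta_{\X^*}=\delta^{\,\textrm{sos}}_{\X^*}$. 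The extremality step, where you argue that $\bigcap_{\X^*}E(P,\X^*)=\{0\}$ because a nonzero $H$ there would produce a decomposition $P=(P-\ep H^2)+\ep H^2$ contradicting extremality, is exactly the paper's argument.

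The genuine gap is in your proposed route to the pointwise upper bound $\delta^{\,\textrm{hd}}_{\X^*}(P)\leq\delta^{\,\textrm{sos}}_{\X^*}(P)$. Proposition \ref{propo:mult-bound} gives a \emph{lower} bound on the intersection multiplicity $I_{\X^*}(H_1,H_2)$ of elements of $E(P,\X^*)$, which measures how constrained those elements are at $\X^*$; it does not give an upper bound on the codimension of $E(P,\X^*)$, and in fact the logical direction is backwards for what you need. Moreover, the ``standard identification'' of $E(P,\X^*)$ with the preimage of a colength-$\delta^{\,\textrm{sos}}_{\X^*}(P)$ ideal in $\cO_{\x^*}$ is not established anywhere in the paper and would itself require proof. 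You correctly sense that the fix is an induction along the resolution of singularities, and that is indeed what the paper does, but the mechanism is simpler than you suggest: one observes directly that passing from $(p,\x^*)$ to the strict transform $(p',\x')$ at the unique real infinitely-near point drops $\delta^{\,\textrm{hd}}$ by at most $1$ (vanishing at $\X^*$ is a single linear condition that is consumed by the blow-up), while the recursive formulas \eqref{eq:delta} and \eqref{eq:sos-delta} drop $\delta$ and $\delta^{\,\textrm{sos}}$ by exactly $1$ when $m_{\x^*}=2$; since all three invariants equal $1$ at a round zero, the inequality $\delta^{\,\textrm{hd}}_{\X^*}\leq\delta_{\X^*}$ follows by unwinding the induction. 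Replacing your Proposition-\ref{propo:mult-bound}-based sketch with this blow-up monotonicity observation closes the gap and recovers the paper's argument.
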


\begin{proof}
  Let us again assume that $\X^*=[0:0:1]$ and $\x^*=(0,0)\in \mathbb{A}^2_{\mathbb{C}}$ is an isolated real zero of $p(x_1,x_2)=P(x_1,x_2,1)$.
  By Lemma \ref{lem:scroll}, $m_{\x^*}(p)=m_{\X^*}(P)=2$.

  Let us  observe that if $\x'$ is a real first order infinitely near point of $p=0$ at $\x^*$, then $\delta^{\,\textrm{hd}}(p,\x^*)\leq 1+\delta^{\textrm{hd}}(p',\x')$ decreases at most by $1$ with a blow-up. Here we denote $\delta^{\,\textrm{hd}}(p,\x^*):=\delta^{\,\textrm{hd}}(P,\X^*)$.
  For the delta invariant we have equality $\delta_{\x^*}(p)=1+\delta_{\x'}(p')$, see \eqref{eq:delta}.
  Since after finitely many blow-ups we reach a round zero for which it would hold $\delta_{\x^*}(p)=\delta^{\,\textrm{hd}}(p,\x^*)=1$ (see, for example, \cite[p. $24$]{Reznick2007OnHC}), we have that $\delta^{\,\textrm{hd}}(p,\x^*)\leq \delta_{\x^*}(p)$.

  By \cite[Thm. 2]{blekherman_hauenstein_ottem_ranestad_sturmfels_2012}, the curve $\cV(P)\subset\mathbb{P}_{\C}^2$ defined by  $P\in \cE(P_{3,6})\setminus \Sigma_{3,6}$ is rational and, by \cite[Rem. 8]{blekherman_hauenstein_ottem_ranestad_sturmfels_2012}, it is a limit of forms in $\cE(P_{3,6})\setminus \Sigma_{3,6}$ that span exposed rays of $P_{3,6}$.
  Therefore, by the genus-degree formula (see Remark \ref{rem:gd}) we have that $\delta^{\mathbb{R}}(P)=\sum_{\X\in \mathcal{V}_{\mathbb{R}}(P)} \delta_{\X}(P)=10$, and $\delta(P)=10$ as well.
  Also, $\delta^{\,\textrm{hd}}(P) \geq 10$ (cf. \cite[Thm. 7.8]{Reznick2007OnHC}) as we explain in the end of the proof.
Combining these inequalities together, we obtain
  \begin{align}
    10\ =\ \delta(P)\ =\ \sum_{\X\in \mathcal{V}_{\mathbb{R}}(P)} \delta_{\X}(P)\ \geq \sum_{\X\in \mathcal{V}_{\mathbb{R}}(p)} \delta^{\,\textrm{hd}}(P,\X)\ =\ \delta^{\,\textrm{hd}}(P)\ \geq\ 10
  \end{align}
and so $\delta^{\,\textrm{hd}}(P)=\delta(P)=\delta^{\,\textrm{sos}}(P)$, where the second equality follows from Lemmas \ref{lem:scroll} and \ref{lem:delta=delta}.

It remains to justify  $\delta^{\,\textrm{hd}}(P)=\sum_{\X\in \mathcal{V}_{\mathbb{R}}(P)} \delta(P,\X) \geq 10$.  If this is false, then
  \[\textrm{codim}\, \left(\bigcap_{\X\in \mathcal{V}_{\mathbb{R}}(P)} E(P,\X)\right)\ \leq\ \sum_{\X\in \mathcal{V}_{\mathbb{R}}(P)} \textrm{codim}\, E(P,\X)\ =\ \sum_{\X\in \mathcal{V}_{\mathbb{R}}(P)} \delta^{\,\textrm{hd}}(P,\X)\ <\ 10,\]
  and in the $10$-dimensional space of ternary cubic forms there must exist $H\in \bigcap_{\X\in \mathcal{V}_{\mathbb{R}}(P)} E(P,\X)$.
  This in particular means that for a small enough $\varepsilon>0$, a degree six form $P-\varepsilon H^2$ is globally nonnegative (that is, $P-\varepsilon H^2\in P_{3,6}$) and hence $P=(P-\varepsilon H^2)+\varepsilon H^2$ is not extremal.  
  \end{proof}

\begin{remark}\label{rem:count_proved}
  The proof of Lemma \ref{lem:Bruce's_delta=delta} yields the formula
  \begin{align*}
    \delta^{\,\textrm{\normalfont{hd}}}(P)\ =\ \sum_{\X\in \mathcal{V}_{\mathbb{R}}(P)} \delta^{\,\textrm{\normalfont{hd}}}(P,\X)\ =\ 10
    \end{align*}
for any $P\in \cE(P_{3,6})\setminus \Sigma_{3,6}$, which was conjectured  in \cite[Conj. 7.9]{Reznick2007OnHC}. 
\end{remark}

Now we are ready to prove our main Theorem \ref{thm:main}.\\
\begin{proof}[Proof of Theorem \ref{thm:main}]
  Let $P\in \cE(P_{3,6})\cap \Delta_{3,6}$. By Lemma \ref{lem:Bruce's_delta=delta} we know that $\delta^{\,\textrm{\normalfont{sos}}}(P)=10$.
  Let now $k\geq 1$ be odd. Then we have by Proposition \ref{propo:mult-powers} that
  \begin{align*}
    \delta^{\,\textrm{\normalfont{sos}}}(P^k)\ =\ \sum_{\X\in \cV_{\R}(P)} \delta^{\,\textrm{\normalfont{sos}}}_{\X}(P^k)\ =\ \sum_{\X\in \cV_{\R}(P)}k^2\delta^{\,\textrm{\normalfont{sos}}}_{\X}(P)\ =\ k^2 \,\delta^{\,\textrm{\normalfont{sos}}}(P)\ =\ 10\,k^2.
  \end{align*}
  If $P^k=\sum_{i=1}^r H_i^2$ was a sum of squares, then necessarily $r\geq 3$ (since otherwise $P^k=H_1^2+H_2^2=(H_1+i H_2)(H_1-i H_2)$ and hence $P\in \Delta_{3,6}$ is reducible, which is impossible by Lemma \ref{lem:folklore}).
Then again by Lemma \ref{lem:folklore} and \cite[Lemma $4.5$]{Choi1980RealZO} the degree $3k$ forms $H_1$ and $H:=a_2H_2+\dots+a_rH_r$ (for some $a_2,\dots, a_r\in \R$) are coprime and hence
$I(H_1,H)=(3k)\cdot(3k)=9k^2$ holds by Theorem \ref{thm:Bezout}. On the other hand, for any real zero $\X^*\in \cV_{\R}(P)$ of $P$, Proposition \ref{propo:mult-bound} gives $I_{\X^*}(H_1,H)\geq \delta^{\,\textrm{\normalfont{sos}}}_{\X^*}(P^k)$. Combining everything together, we obtain that
\begin{align*}
  9k^2\ =\ I(H_1,H)\ \geq\ \sum_{\X\in \cV_{\R}(P)} I_{\X}(H_1,H)\ \geq\ \sum_{\X\in \cV_{\R}(P)} \delta^{\,\textrm{\normalfont{sos}}}_{\X}(P^k)\ =\ \delta^{\,\textrm{\normalfont{sos}}}(P^k)\ =\ 10\,k^2,
\end{align*}
which is an obvious contradiction.

\end{proof}

\subsection{Ternary forms of higher degree}\label{sub:higher_degree}

We note that existence of stubborn ternary forms in degree $6$ implies existence of stubborn ternary forms of higher degree via the following well-known technique (see, for example, \cite[$(1.4)$]{CL2}).

\begin{proposition}\label{prop:ext}
Suppose that $F \in P_{n,d}$ is stubborn. Then $X_1^{2m}F\in P_{n,d+2m}$ is also stubborn for all nonnegative integers $m\geq 1$.
\end{proposition}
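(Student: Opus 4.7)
The plan is to prove the contrapositive: if $X_1^{2m}F$ is not stubborn, then $F$ is not stubborn. Concretely, I will show that if $(X_1^{2m}F)^{2k+1} = X_1^{2m(2k+1)}F^{2k+1}$ is a sum of squares for some $k\geq 0$, then $F^{2k+1}$ itself must be a sum of squares, contradicting stubbornness of $F$.

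The main ingredient is the following standard divisibility fact, which I would state and prove as a preliminary lemma: for any form $G \in F_{n,d}$ and any integer $\ell \geq 1$, if $X_1^{2\ell} G$ is a sum of squares, then $X_1^{2\ell-2} G$ is also a sum of squares. To see this, suppose $X_1^{2\ell} G = \sum_{i=1}^r H_i^2$ with $H_i \in F_{n,(d+2\ell)/2}$. Setting $X_1 = 0$ gives
\begin{equation*}
0 \ =\ \sum_{i=1}^r H_i(0, X_2, \ldots, X_n)^2,
\end{equation*}
and since each summand is a nonnegative polynomial in $X_2, \ldots, X_n$, each $H_i(0, X_2, \ldots, X_n)$ vanishes identically. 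Hence $X_1 \mid H_i$ for every $i$, so $H_i = X_1 G_i$ for some form $G_i$. Substituting back yields $X_1^{2\ell}G = X_1^2 \sum_i G_i^2$, and cancelling $X_1^2$ (which is legal in the polynomial ring) gives $X_1^{2\ell - 2}G = \sum_i G_i^2 \in \Sigma_{n, d+2\ell - 2}$.

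With this lemma in hand, the proof of the proposition is a short induction. Suppose for contradiction that $X_1^{2m}F$ is not stubborn, so $X_1^{2m(2k+1)}F^{2k+1} \in \Sigma_{n, (d+2m)(2k+1)}$ for some $k\geq 0$. Apply the lemma iteratively with $G = F^{2k+1}$, starting from $\ell = m(2k+1)$ and decreasing $\ell$ by $1$ each step. After $m(2k+1)$ applications we conclude that $F^{2k+1}$ is a sum of squares, contradicting the stubbornness of $F$.

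There is no real obstacle here; the only subtle point is the cancellation step, where one must note that $X_1^2$ is a non-zero-divisor in the polynomial ring $\R[X_1, \ldots, X_n]$ so dividing the identity $X_1^{2\ell}G = X_1^2 \sum_i G_i^2$ by $X_1^2$ yields a genuine polynomial identity. Everything else is a straightforward book-keeping induction on the exponent of $X_1$.
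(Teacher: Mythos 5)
Your proof is correct and follows essentially the same approach as the paper. The paper asserts in one step that $X_1^{km}$ divides every $H_i$ and then cancels $X_1^{2km}$, while you justify that divisibility explicitly by iterating the standard ``set $X_1=0$'' argument; this is exactly the reasoning the paper leaves implicit.
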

\begin{proof}
  It is clear that $X_1^{2m}F$ is nonnegative. Suppose that $(X_1^{2m}F)^k$ is a sum of squares for some odd $k$, that is,
    $X_1^{2km}F^{k} = \sum_{i=1}^r H_i^2$
  Then it follows that $X_1^{km}$ divides all $H_i$ , and therefore $F^k$ is a sum of squares. This contradicts the assumption that $F$ is stubborn.
\end{proof}

We can also use the same argument as in Theorem \ref{thm:main} to show that nonnegative ternary forms with sufficiently many zeros are stubborn, which leads to more interesting examples of stubborn ternary forms of higher degree.

\begin{theorem}\label{thm:higher_d}
  Let $P\in P_{3,d}$ be a nonnegative ternary form of degree $d$ with  $\delta^{\,\textrm{\normalfont{sos}}}(P)> d^2/4$. Then $P$ is stubborn.

\end{theorem}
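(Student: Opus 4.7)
The plan is to imitate the proof of Theorem~\ref{thm:main}. I suppose for contradiction that $P^{2k+1}=\sum_{i=1}^{r}H_{i}^{2}$ is a sum of squares, where $H_{i}\in F_{3,d(2k+1)/2}$. The goal is to extract two coprime forms $H_{1},H$ of full degree $d(2k+1)/2$ from $\operatorname{span}\{H_{1},\ldots,H_{r}\}$ and then derive a contradiction from the mismatch between the total intersection count provided by B\'ezout's theorem and the local lower bound at each real zero of $P$ given by Proposition~\ref{propo:mult-bound}.

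Granting for a moment that such a coprime pair has been produced, observe that any linear combination of the $H_{i}$ lies in the local SOS-support $E(P^{2k+1},\X^{*})$ at every real zero $\X^{*}\in\cV_{\R}(P)$, since $\bigl(\sum c_{i}H_{i}\bigr)^{2}\leq\bigl(\sum c_{i}^{2}\bigr)P^{2k+1}$ holds globally. Proposition~\ref{propo:mult-bound} combined with Proposition~\ref{propo:mult-powers} then yields
\[I_{\X^{*}}(H_{1},H)\ \geq\ \delta^{\,\textrm{sos}}_{\X^{*}}(P^{2k+1})\ =\ (2k+1)^{2}\,\delta^{\,\textrm{sos}}_{\X^{*}}(P)\]
at every $\X^{*}\in\cV_{\R}(P)$, while Theorem~\ref{thm:Bezout} gives $I(H_{1},H)=d^{2}(2k+1)^{2}/4$. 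Summing and invoking the hypothesis $\delta^{\,\textrm{sos}}(P)>d^{2}/4$ produces the contradiction
\[\frac{d^{2}(2k+1)^{2}}{4}\ =\ I(H_{1},H)\ \geq\ \sum_{\X^{*}\in\cV_{\R}(P)}I_{\X^{*}}(H_{1},H)\ \geq\ (2k+1)^{2}\,\delta^{\,\textrm{sos}}(P)\ >\ \frac{d^{2}(2k+1)^{2}}{4}.\]

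In the easy case $\gcd(H_{1},\ldots,H_{r})=1$, Choi's Lemma \cite[Lemma~4.5]{Choi1980RealZO} immediately supplies $a_{2},\ldots,a_{r}\in\R$ with $\gcd(H_{1},\,a_{2}H_{2}+\cdots+a_{r}H_{r})=1$, and the argument above closes without further work. This already covers the situation where $P$ is irreducible over $\C$ and $2k+1$ is chosen minimally such that $P^{2k+1}$ is a sum of squares, since any nontrivial common factor of the $H_i$ would then be a power of $P$, allowing descent to a smaller odd exponent.

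The main obstacle is the case $G:=\gcd(H_{1},\ldots,H_{r})\neq 1$ when $P$ is reducible, in which every element of $\operatorname{span}\{H_{i}\}$ inherits the factor $G$ and no coprime pair exists within that span. The plan here is to divide through: set $\tilde H_{i}:=H_{i}/G$ and $Q:=P^{2k+1}/G^{2}=\sum\tilde H_{i}^{2}$; apply Choi's lemma to the now coprime family $\{\tilde H_{i}\}$ to obtain a coprime pair $\tilde H_{1},\tilde H$ of degree $D:=d(2k+1)/2-\deg G$; and run the same B\'ezout and local-multiplicity accounting against $\cV_{\R}(Q)$, which yields $D^{2}\geq\delta^{\,\textrm{sos}}(Q)$. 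The remaining step, which I expect to be the main technical difficulty, is to convert the hypothesis $\delta^{\,\textrm{sos}}(P^{2k+1})=(2k+1)^{2}\delta^{\,\textrm{sos}}(P)>(2k+1)^{2}d^{2}/4$ into the strict inequality $\delta^{\,\textrm{sos}}(Q)>D^{2}$ using the factorization $P^{2k+1}=G^{2}Q$, via a local product formula for $\delta^{\,\textrm{sos}}$ analogous to the classical identity $\delta_{\x^{*}}(fg)=\delta_{\x^{*}}(f)+\delta_{\x^{*}}(g)+I_{\x^{*}}(f,g)$ for coprime plane curves; additional care is required at common real zeros where $G^{2}$ and $Q$ share tangents, which amounts to tracking the real first-order infinitely near points appearing in the recursion~\eqref{eq:sos-delta}.
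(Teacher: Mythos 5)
Your ``easy case'' is precisely the paper's entire published proof of Theorem~\ref{thm:higher_d}: the authors write $P^k=H_1^2+H_2^2+\dots$, invoke Propositions~\ref{propo:mult-powers} and~\ref{propo:mult-bound} to get $I_{\X^*}(H_1,H_2)\geq k^2\delta^{\,\mathrm{sos}}_{\X^*}(P)$ at every real zero, sum, and invoke B\'ezout to obtain $(dk/2)^2\geq k^2\delta^{\,\mathrm{sos}}(P)>k^2d^2/4$. The coprimality of $H_1,H_2$ (and even the existence of a second summand) is simply assumed in the paper's argument; no analogue of the $r\geq 3$ plus Choi's Lemma step from the proof of Theorem~\ref{thm:main} appears, so your suspicion that this is a genuine gap is well founded.

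Your ``hard case'' cannot, however, be closed, because Theorem~\ref{thm:higher_d} as stated is false for forms with square factors. Take $P=M^2\in P_{3,12}$, where $M$ is the Motzkin form. By Lemma~\ref{lem:Bruce's_delta=delta} we have $\delta^{\,\mathrm{sos}}(M)=10$, so by Proposition~\ref{propo:mult-powers} (summed over the six real zeros of $M$) one gets $\delta^{\,\mathrm{sos}}(M^2)=4\cdot 10=40>36=12^2/4$. Yet $M^2=(M)^2$ is already a sum of squares, so $(M^2)^1\in\Sigma_{3,12}$ and $M^2$ is not stubborn. Your descent $P^{2k+1}=G^2Q$ applied to $k=0$, $G=M$, $Q=1$ gives $D=0$ and $\delta^{\,\mathrm{sos}}(Q)=0$, so the desired strict inequality $\delta^{\,\mathrm{sos}}(Q)>D^2$ collapses to $0>0$: this is not merely a technical difficulty with a missing product formula, it is the argument correctly refusing to prove a false statement. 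The theorem (and the paper's proof) therefore needs an extra hypothesis ruling out square factors --- for instance requiring $P$ square-free, or irreducible over $\C$, in which case your descent via powers of $P$ together with Choi's Lemma does reduce everything to the easy case and the proof closes exactly as you and the paper describe. (Note that Corollary~\ref{cor:higher_d}, which counts isolated real zeros rather than weighted SOS-invariants, does not have this problem: a perfect square $H_1^2\in P_{3,d}$ has at most $\binom{d/2-1}{2}<d^2/4$ isolated real zeros, so the hypothesis there already excludes the pathological case.)
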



\begin{proof}
  Assume $P$ is not stubborn, that is, $P^k=H_1^2+H_2^2+\dots$ for some odd $k\geq 1$.
  Then by Propositions \ref{propo:mult-powers} and \ref{propo:mult-bound} we have  $I_{\X^*}(H_1,H_2)\geq \delta^{\,\textrm{\normalfont{sos}}}_{\X^*}(P^k)=k^2\delta^{\,\textrm{\normalfont{sos}}}_{\X^*}(P)$ for every real zero $\X^*\in \mathbb{P}_{\mathbb{R}}^2$ of $P$. Summing up over all real zeros of $P$, we obtain  \[
   \left(\frac{dk}{2}\right)^2\ =\ I(H_1,H_2)\ \geq\ k^2\delta^{\,\textrm{\normalfont{sos}}}(P)\ >\ \frac{k^2d^2}{4},\]
  which is a contradiction.
\end{proof}

Since a real zero contributes to the SOS-invariant by at least one, we have a corollary.

\begin{corollary}\label{cor:higher_d}

  If $P\in P_{3,d}$ has more than $d^2/4$ isolated zeros in $\mathbb{P}_{\mathbb{R}}^2$, then $P$ is stubborn.
\end{corollary}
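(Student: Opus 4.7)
The plan is to deduce this directly from Theorem \ref{thm:higher_d} by establishing the pointwise lower bound $\delta^{\,\textrm{sos}}_{\X^*}(P)\geq 1$ at every isolated real zero of a nonnegative ternary form $P$. Once this bound is in hand, summing it over the (strictly more than $d^2/4$) isolated real zeros of $P$ yields $\delta^{\,\textrm{sos}}(P) > d^2/4$, and Theorem \ref{thm:higher_d} immediately gives stubbornness.

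For the pointwise bound I would distinguish whether the zero $\X^*$ is an ordinary singularity (round zero) of $\cV(P)$. In the round zero case, the definition of the SOS-invariant directly gives $\delta^{\,\textrm{sos}}_{\X^*}(P)=1$. Otherwise the recursive formula \eqref{eq:sos-delta} (applied to the dehomogenization $p$ of $P$ at $\X^*$) reads
\[
\delta^{\,\textrm{sos}}_{\X^*}(P) \;=\; \frac{m_{\X^*}(P)^2}{4} \;+\; \sum_{\x'\,-\,\textrm{real}} \delta^{\,\textrm{sos}}_{\x'}(p').
\]
A routine induction on the number of blow-ups required to resolve the singularity — whose base case is precisely the round zero calculation above — shows that each summand $\delta^{\,\textrm{sos}}_{\x'}(p')$ is nonnegative; here one uses Lemma \ref{lem:nonnegative_blow} to keep the strict transforms locally nonnegative at the real infinitely near points, so that the inductive hypothesis applies. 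It therefore suffices to bound the first summand from below, and this is where Remark \ref{rem:m-even} enters: the multiplicity of a nonnegative form at a real zero is even and positive, so $m_{\X^*}(P)\geq 2$, which gives $m_{\X^*}(P)^2/4 \geq 1$, as needed.

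I do not anticipate any real obstacle here: the corollary is a one-line bookkeeping consequence of Theorem \ref{thm:higher_d} combined with the parity observation in Remark \ref{rem:m-even}. The only subtlety worth a sentence is the presence of possible non-isolated real zeros of $P$, but since the contributions of the isolated zeros alone already push $\delta^{\,\textrm{sos}}(P)$ past the threshold $d^2/4$, one never needs to analyze the non-isolated real components of $\cV(P)$ in order to invoke Theorem \ref{thm:higher_d}.
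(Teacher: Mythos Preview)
Your proposal is correct and follows exactly the paper's approach: the paper's entire proof is the single sentence ``Since a real zero contributes to the SOS-invariant by at least one, we have a corollary,'' and your write-up simply spells out that one-line observation via the definition of $\delta^{\,\textrm{sos}}$ together with Remark~\ref{rem:m-even}. Your added remark about non-isolated zeros is harmless bookkeeping and does not diverge from the paper's argument.
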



The next example shows that in general the delta invariant, the SOS-invariant and the half-degree invariant can be different.

\begin{example}
  The ternary octic
  \begin{align}\label{eq:Motzkin_change}
    P\ =\ X_1^4X_2^4X_3^6 M(1/X_1, 1/X_2,1/X_3)\ =\ X_1^4X_2^4+ X_1^2X_3^6+X_2^2X_3^6-3X_1^2X_2^2X_3^4
  \end{align} belongs to $\cE(P_{3,8})\cap \Delta_{3,8}$ \cite[p. 372]{R}.
  It has five round zeros at $[0:0:1]$, $[\pm 1,\pm 1,1]$ and two more degenerate zeros $\X^*$ at $[1:0:0]$ and $[0:1:0]$ (cf. \cite[p. 25]{Reznick2007OnHC}). Using \eqref{eq:hd}, \eqref{eq:sos-delta} and \eqref{eq:delta}, one computes $\delta^{\,\textrm{\normalfont{hd}}}(P,\X^*)=5$, $\delta^{\,\textrm{\normalfont{sos}}}_{\X^*}(P)=6$ and $\delta_{\X^*}(P)=8$ that gives $\delta^{\,\textrm{\normalfont{hd}}}(P)=15=5+2\cdot 5$, $\delta^{\,\textrm{\normalfont{sos}}}(P)=5+2\cdot 6=17$ and $\delta(P)=5+2\cdot 8=21$ for the total invariants. 
  Theorem \ref{thm:higher_d} implies that $P$ is stubborn (that is, $P\in \Delta_{3,8}(\infty)$).
  This can be also derived from the stubbornness of the Motzkin form and \eqref{eq:Motzkin_change}  which is valid when the roles of $M$ and $P$ are reversed.
  This gives further evidence to our Conjecture \ref{conj:general}.
  
\end{example}

\begin{remark}
  In \cite[Thm. $4.5$]{Brugall2018RealAC}, via \emph{combinatorial patchworking}, Brugall\'e et al. construct nonnegative forms $P\in P_{3,d}$ with more than $d^2/4$ isolated real zeros for any $d$.
 By Corollary \ref{cor:higher_d} all such forms are stubborn. 
\end{remark}


\subsection{The earlier proof for the Motzkin form}\label{sec:Motzkin}

Below we give the elementary proof (alluded to in \cite{Sten, CDLR}) that the Motzkin form is stubborn.
Recall that the Newton polytope of a polynomial $p=\sum_{\alpha} p_{\alpha} \x^\alpha$ written in the basis of monomials is a convex polytope $\New(p)=\textrm{conv}(\{\alpha\in \mathbb{Z}^n\,:\, p_\alpha\neq 0\})$.
The following property of the Newton polytope of a sum of squares form is well-known.
 
\begin{lemma}{\cite[Thm. 1]{R}}\label{R1}
If $p = \sum_{i=1}^r h_i^2$ is a sum of squares, then $2\cdot\New(h_i)\subseteq \New(p)$.
\end{lemma}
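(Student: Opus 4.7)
The plan is to show that $2u\in \New(p)$ for every $u\in \supp(h_i)$; since $\New(p)$ is convex and $2\cdot \New(h_i)$ is the convex hull of $\{2u : u \in \supp(h_i)\}$, this immediately yields the inclusion $2\cdot\New(h_i)\subseteq \New(p)$. By the standard duality description of a convex polytope, proving $2u \in \New(p)$ amounts to checking that $\ell(2u) \geq \min_{v\in \supp(p)} \ell(v)$ for every linear functional $\ell:\R^n\to\R$.

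So fix such an $\ell$. For each index $j$ with $h_j\not\equiv 0$, set $m_j := \min_{w \in \supp(h_j)} \ell(w)$ and define the ``$\ell$-leading part'' of $h_j$ by
\[g_j := \sum_{w\in \supp(h_j),\ \ell(w)=m_j} [h_j]_w\, x^w,\]
which is a nonzero polynomial. Put $m^* := \min_j 2m_j$. Expanding $p = \sum_j h_j^2$ and isolating the monomials whose $\ell$-value equals $m^*$ yields
\[q := \sum_{j\,:\,2m_j = m^*} g_j^2,\]
while every other term in the expansion of $p$ has $\ell$-value strictly greater than $m^*$.

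The key step is to argue that $q\not\equiv 0$. A priori the summands could cancel inside $p$, but the sum-of-squares structure rules this out: $q$ is a sum of squares of nonzero real polynomials, and if $q\equiv 0$ as a polynomial then $q(x)=\sum_j g_j(x)^2=0$ for every real $x\in\R^n$, forcing every $g_j\equiv 0$, a contradiction. Hence $q$ contributes at least one monomial with a nonzero coefficient to $p$, giving $\min_{v\in\supp(p)}\ell(v)=m^*$. Since $u \in \supp(h_i)$, we have $m^*\leq 2m_i\leq 2\ell(u)$, so $\ell(2u)\geq \min_{v\in\supp(p)}\ell(v)$, as required; letting $\ell$ vary completes the argument.

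The only subtle point, and the main obstacle, is exactly this non-cancellation claim for $q$. A purely formal Newton-polytope manipulation valid for arbitrary algebraic expressions would be insufficient, since in principle the lowest-$\ell$ terms of $\sum_j h_j^2$ could cancel; the proof essentially uses the positivity of a sum of squares of real polynomials, which is the distinguishing feature of the sum-of-squares setting.
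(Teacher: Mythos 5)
Your proof is correct, and since the paper only cites this result as \cite[Thm. 1]{R} without reproducing the argument, the comparison is really to Reznick's original proof, which uses exactly this leading-form/separating-functional argument and the same non-cancellation step powered by positivity of real sums of squares. Your write-up is the standard proof.
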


For the Motzkin form, this codifies the elimination of possible terms in a square. We have
\begin{align*}
  \New(M)\ =\ \textrm{conv}\left((4,2,0),(2,4,0),(0,0,6)\right)
\end{align*}
and, if $M = \sum_{i=1}^r H_i^2$ was a sum of squares, $\New(H_i)$, $i=1,\dots, r$, would be contained in a triangle
$\Delta:=\textrm{conv}\left((2,1,0),(1,2,0),(0,0,3)\right)$.
The only integer points in $\Delta$ are its vertices and the interior point $(1,1,1)$,
hence the only possible terms in $H_i$ are $X_1^2X_2, X_1X_2^2, X_3^3, X_1X_2X_3$.
Another useful fact relates to representations of an even form as a sum of squares, this result appeared in \cite{CLR1}.
\begin{proposition}{\cite[Thm.4.1]{CLR1}}\label{R2}
Suppose $P\in \Sigma_{n,d}$ is an even sum of squares form, then we may write $P = \sum_{i=1}^r H_i^2$, where $H_i = \sum_{\alpha} H_{i\alpha}\X^{\alpha}$ and all $\alpha$'s belong to
one congruence class modulo 2 componentwise.
\end{proposition}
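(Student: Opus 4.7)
The plan is to exploit the $(\mathbb{Z}/2\mathbb{Z})^n$-grading of $\R[X_1,\dots,X_n]$ given by parity of exponents. For $\epsilon \in \{0,1\}^n$, let $\pi_\epsilon$ denote the $\R$-linear projection that retains exactly those monomials $\X^\alpha$ with $\alpha \equiv \epsilon \pmod{2}$ componentwise; note that $\pi_\epsilon \circ \pi_{\epsilon'} = \delta_{\epsilon,\epsilon'} \pi_\epsilon$ and that $P$ is even precisely when $P = \pi_{\mathbf{0}}(P)$.

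Starting from any sum of squares representation $P = \sum_{i=1}^r H_i^2$, I would decompose each factor by parity:
\[
H_i \ =\ \sum_{\epsilon \in \{0,1\}^n} H_{i,\epsilon}, \qquad H_{i,\epsilon} := \pi_\epsilon(H_i).
\]
The crucial observation is multiplicative: since $\X^\alpha \cdot \X^\beta = \X^{\alpha+\beta}$ and parity is additive modulo $2$, the product $H_{i,\epsilon} \cdot H_{i,\epsilon'}$ is supported entirely in parity class $\epsilon + \epsilon' \pmod{2}$. In particular $(H_{i,\epsilon})^2$ is even for every $\epsilon$, whereas the cross products $H_{i,\epsilon} H_{i,\epsilon'}$ with $\epsilon \neq \epsilon'$ contribute only to non-trivial parity classes. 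Expanding each square yields
\[
H_i^2 \ =\ \sum_{\epsilon} (H_{i,\epsilon})^2 \ +\ \sum_{\epsilon \neq \epsilon'} H_{i,\epsilon} H_{i,\epsilon'}.
\]

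Now I would apply $\pi_{\mathbf{0}}$ to both sides of $P = \sum_i H_i^2$. On the left $\pi_{\mathbf{0}}(P) = P$ by the evenness hypothesis, while on the right the cross terms of nonzero parity are annihilated. This produces
\[
P \ =\ \sum_{i=1}^r \sum_{\epsilon \in \{0,1\}^n} (H_{i,\epsilon})^2,
\]
and by construction every nonzero summand $H_{i,\epsilon}$ has all of its monomials in the single congruence class $\epsilon$ modulo $2$. Re-indexing the (at most $r \cdot 2^n$) nonzero $H_{i,\epsilon}$ as $H_1,\dots,H_{r'}$ gives the desired representation.

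There is no real obstacle: the argument is a one-step parity projection, and the only identity needed is that squaring respects the parity grading in the sense described above. The homogeneity of $P$ plays no role, so the same argument applies to polynomials; the only subtlety is that the resulting $H_{i,\epsilon}$ might not be homogeneous in the usual degree, but since $P$ is homogeneous one can then take the degree-$d/2$ homogeneous component of each $H_{i,\epsilon}$ without affecting the identity, as the degree grading is compatible with the parity grading.
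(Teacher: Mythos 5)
Your argument is correct. Note that the paper does not prove this proposition; it simply cites it from \cite{CLR1} (Theorem~4.1 there), so there is no in-paper proof to compare against. Your parity-decomposition argument---split each $H_i$ along the $(\mathbb{Z}/2\mathbb{Z})^n$-grading, observe that the squares $(H_{i,\epsilon})^2$ land in grade $\mathbf{0}$ while the cross terms land in grades $\epsilon+\epsilon'\neq\mathbf{0}$, and project onto grade $\mathbf{0}$---is the standard and essentially canonical proof, equivalent to averaging the representation over the sign-flip group $\{\pm 1\}^n$. One minor remark: the closing paragraph about extracting degree-homogeneous components is unnecessary here, since each $H_i$ is already homogeneous of degree $d/2$, and $\pi_\epsilon$ merely deletes monomials, so each $H_{i,\epsilon}$ is automatically homogeneous of the same degree.
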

The Motzkin form is even and no two of the monomials $X_1^2X_2, X_1X_2^2, X_3^3, X_1X_2X_3$ (corresponding to $(2,1,0)$,
 $(1,2,0)$, $(0,0,3)$ and $(1,1,1)$ respectively) belong to the same congruence class modulo 2. Thus, if $M$ was a sum of squares,
it would have to be of the form $M=c_1(X_1^2X_2)^2 + c_2(X_1X_2^2)^2 + c_3(X_3^3)^2 + c_4(X_1X_2X_3)^2$ with $c_1,c_2,c_3,c_4 \ge 0$, which is false.

The third fact is a special case of a result on the square of products of linear polynomials. 
\begin{lemma}\label{R3}
Suppose $h_1,\dots, h_r \in \mathbb{R}[t]$ and let $k \ge 1$.
If
\begin{align}\label{eq:identity}
  (t^2-1)^{2k}\ =\ \sum_{i=1}^r h_i(t)^2,
\end{align}
then each $h_i(t)$ is a multiple of $(t^2-1)^k$.
\end{lemma}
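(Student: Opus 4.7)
The plan is to compare the orders of vanishing on both sides of \eqref{eq:identity} at the two roots $t = \pm 1$ of $t^2-1$, exploiting the fact that a sum of real squares cannot have the leading nonzero coefficient of its smallest-power term cancel.

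Concretely, I would write each nonzero $h_i$ uniquely as $h_i(t) = (t-1)^{a_i}(t+1)^{b_i} g_i(t)$ with $g_i(\pm 1) \ne 0$, and set $a := \min_i a_i$ and $b := \min_i b_i$. Squaring, $h_i(t)^2 = (t-1)^{2a_i}(t+1)^{2b_i} g_i(t)^2$, and near $t = 1$ the factor $(t+1)^{2b_i} g_i(t)^2$ takes the strictly positive value $2^{2b_i} g_i(1)^2 > 0$. Hence in the expansion of $\sum_{i=1}^r h_i(t)^2$ around $t = 1$, the coefficient of $(t-1)^{2a}$ is
\begin{equation*}
\sum_{i\,:\,a_i = a} 2^{2b_i} g_i(1)^2 \;>\; 0,
\end{equation*}
so no cancellation occurs and the order of vanishing of the right-hand side at $t = 1$ is exactly $2a$. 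The left-hand side $(t^2-1)^{2k}$ vanishes to order $2k$ at $t = 1$, forcing $a = k$ and therefore $a_i \ge k$ for every $i$. The identical argument at $t = -1$ yields $b_i \ge k$ for every $i$, so $(t-1)^k(t+1)^k = (t^2-1)^k$ divides each $h_i$.

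There is no substantive obstacle here; the only point to handle carefully is the observation that the leading coefficients $2^{2b_i} g_i(1)^2$ are positive (not merely nonzero), which is what guarantees the vanishing orders match exactly rather than being merely a lower bound. This positivity, in turn, is the reason the lemma is specific to sums of real squares and not to arbitrary polynomial identities.
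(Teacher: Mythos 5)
Your proof is correct, but it takes a genuinely different route from the paper. The paper argues by induction on $k$: for $k=1$, evaluating at $t=\pm 1$ forces each $h_i(\pm 1)=0$, so $(t^2-1)\mid h_i$; after cancelling $(t^2-1)^2$ the remaining identity forces each $\tilde h_i$ to be constant; the inductive step repeats this factoring-out argument. Your argument instead compares orders of vanishing directly: you factor out all of $(t-1)^{a_i}(t+1)^{b_i}$ at once and observe that the positivity of the leading coefficients $2^{2b_i}g_i(1)^2$ pins down the exact order of vanishing of the right-hand side at $t=1$ (and symmetrically at $t=-1$), which must equal $2k$. Both proofs hinge on the same fact — that a sum of real squares vanishes only when every summand does — but your version applies it once, at the level of lowest-order coefficients, instead of iterating it $k$ times. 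Your approach is arguably more transparent about \emph{why} the result is specific to sums of real squares (the positivity remark you flag is exactly the crux), and it generalizes directly to statements about orders of vanishing of sums of squares at any real point; the paper's inductive proof is a bit shorter to write down and stays closer to the $k=1$ base case. Both are complete and correct.
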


\begin{proof}
  The proof is by induction on $k$.
 If $k = 1$, then $0 = \sum_{i=1}^r h_i(\pm1)^2$, so $h_i(t) = \tilde h_i(t)\,(t^2-1)$ and after cancelling $(t^2-1)^2$, we have
 $1 = \sum_{i=1}^r \tilde h_i(t)^2$, so each $h_i$ must be a constant.
 In the inductive step, we just need to factor out $(t^2-1)^2$ from both sides of \eqref{eq:identity} and repeat the same argument.
\end{proof}

We now prove the stubbornness of $M$.
\begin{theorem}
The Motzkin form \eqref{eq:Motzkin} is stubborn, that is, $M^k$ is not a sum of squares for all odd $k\geq 1$.
\end{theorem}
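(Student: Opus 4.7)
Plan: Assuming $M^k = \sum_{i=1}^r H_i^2$ for some odd $k \ge 1$, I would combine the three preparatory results above (Lemma \ref{R1}, Proposition \ref{R2}, Lemma \ref{R3}) with two carefully chosen substitutions to derive a contradiction. By Lemma \ref{R1}, $\New(H_i) \subseteq \Delta_k := \mathrm{conv}((2k,k,0), (k,2k,0), (0,0,3k))$. A direct computation in barycentric coordinates shows that $(0,0,3k)$ is the unique lattice point of $\Delta_k$ with second coordinate $0$, and $(k,2k,0)$ is the unique lattice point with second coordinate $2k$.

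Next, I would evaluate $M^k$ on two slices. The substitution $X_2 = 0$ gives $M^k(X_1,0,X_3) = X_3^{6k}$, and the Newton-polytope restriction collapses $H_i(X_1,0,X_3)$ to a multiple of $X_3^{3k}$: writing $\alpha_i := c_{i,0,0,3k}$, one has $H_i(X_1,0,X_3) = \alpha_i X_3^{3k}$ and consequently $\sum_i \alpha_i^2 = 1$. The substitution $X_1 = X_3 = 1$ yields $M^k(1,X_2,1) = (X_2^2-1)^{2k}$, so Lemma \ref{R3} forces $H_i(1,X_2,1) = \mu_i (X_2^2-1)^k$. Matching the coefficient of $X_2^{2k}$ via the Newton-polytope identification of $(k,2k,0)$ as the only lattice point with $b = 2k$ gives $\mu_i = c_{i,k,2k,0}$.

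The crucial bridge is to evaluate $H_i(1,0,1)$ from each description: via the first slice it equals $\alpha_i$, via the second it equals $\mu_i(-1)^k = -\mu_i$ since $k$ is odd. Hence $c_{i,k,2k,0} = -\alpha_i$. Proposition \ref{R2} now closes the argument: since $M^k$ is an even form, each $H_i$ may be assumed to have all monomials in one $(\mathbb{Z}/2)^3$ parity class. For $k$ odd, the exponent $(k,2k,0)$ reduces to $(1,0,0) \pmod 2$, while $(0,0,3k)$ reduces to $(0,0,1) \pmod 2$, so these two coefficients live in disjoint parity classes. At most one of $c_{i,k,2k,0}$ and $\alpha_i$ is nonzero for each $i$; the identity $c_{i,k,2k,0} = -\alpha_i$ forces both to vanish, so $\alpha_i = 0$ for all $i$, contradicting $\sum_i \alpha_i^2 = 1$.

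The only conceptual step is recognizing that the two substitutions meet at the common point $[1:0:1] \in \mathbb{P}^2_{\mathbb{R}}$; the parity of $k$ is essential so that the sign $(-1)^k$ pins $\mu_i$ to $-\alpha_i$ rather than $+\alpha_i$. Once that bridge is in place, Newton polytope, Lemma \ref{R3}, and the parity dichotomy of Proposition \ref{R2} combine mechanically. Note that no subtle estimate on the triangle $\Delta_k$ is needed beyond the two uniqueness statements for lattice points with a fixed coordinate, which keeps the argument uniform in $k$.
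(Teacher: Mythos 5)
Your proof is correct and rests on the same three ingredients as the paper's proof (the Newton polytope support bound of Lemma \ref{R1}, the parity reduction of Proposition \ref{R2}, and the factorization Lemma \ref{R3}), applied to the same two slices $X_2=0$ and $(X_1,X_3)=(1,1)$. The only place you diverge is the final closing step. The paper restricts to the parity class $(\text{even},\text{even},\text{odd})$ that could carry $X_3^{3k}$, observes that for $k$ odd the vertex $(k,2k,0)$ lies outside that class so that $\deg H_i(1,X_2,1)\le 2k-2$, and then invokes Lemma \ref{R3} to force $H_i(1,X_2,1)=0$ and hence $H_{i,(0,0,3k)}=0$. You instead evaluate both slices at the common point $(1,0,1)$ to obtain $c_{i,(0,0,3k)} = -c_{i,(k,2k,0)}$ (the sign $(-1)^k=-1$ is where oddness of $k$ enters for you), and then let Proposition \ref{R2} kill the pair since the two exponent vectors sit in disjoint parity classes. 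Both closings are valid and use oddness of $k$ in the same spot of $\Delta_k$, namely the vertex $(k,2k,0)$; yours trades the paper's degree estimate for a sign-consistency relation, which is a pleasant alternative but does not change the overall strategy.
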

\begin{proof}
Suppose that $M^k\in \Sigma_{3,6k}$ is a sum of squares and write
\begin{equation}\label{E:1}
M^{k}\ =\ \left(X_1^4X_2^2 + X_1^2X_2^4 + X_3^6 - 3X_1^2X_2^2X_3^2\right)^k\ =\ \sum_{i=1}^r H_i^2.
\end{equation}
We remark for later use that by taking $(X_1,X_2,X_3) = (0,0,1)$ above,
\begin{equation}\label{E:2}
1\ =\ M^{k}(0,0,1)\ =\ \sum_{i=1}^r H_i(0,0,1)^2\ =\ \sum_{i=1}^r H_{i,(0,0,3k)}^2,
\end{equation}
where $H_{i,(0,0,3k)}:=H_i(0,0,1)$ is the coefficient of $X_3^{3k}$ in $H_i$, $i=1,\dots, r$.
Evidently,
\begin{align*}
\New\left(M^{k}\right)\ =\ k\cdot\New(M)\ =\ \textrm{conv}\left((4k,2k,0),(2k,4k,0),(0,0,6k)\right)
\end{align*}
and so by Lemma \ref{R1} the monomials in each $H_i$ must be taken from the triangle
\[
\textrm{conv}\left((2k,k,0),(k,2k,0),(0,0,3k)\right).
\]
When is $(\alpha_1,\alpha_2,\alpha_3) \in \mathbb Z^3$ in this triangle? We have $\alpha_3 = 3k - \alpha_1 - \alpha_2 \ge 0$ and the other two edges give $2\alpha_1 \ge \alpha_2$ and $2\alpha_2 \ge \alpha_1$. Note that $\alpha_2 \le 2k$ and
if  $\alpha_2 = 2k$, then we must have $\alpha_1 = k$.
In particular, $\alpha_1$ must be odd. Further, if $\alpha_2 = 0$, then $\alpha_1=0$. If $\alpha_2= 2$, then $1 \le \alpha_1 \le 4$ and
if $\alpha_2 = 2k-2$, then $k-1 \le \alpha_1 \le k+2$.

By Proposition \ref{R2}, we may assume that each $H_i$ contains only terms $X_1^{\alpha_1}X_2^{\alpha_2}X_3^{\alpha_3}$ of a particular parity.
In view of the interest in $X_3^{3k}$ (cf. \eqref{E:2}),
consider those $H_i$'s in which $\alpha_1$
and $\alpha_2$ are even and $\alpha_3$ is odd. By the above remarks, we must have $\alpha_2\le 2k-2$, since $\alpha_1$ is even, and also the only term for which $\alpha_2= 0$
has $\alpha_1=0$ as well. So we can write such an $H_i$ in increasing powers of $X_2$ as 
\begin{equation}\label{E:4}
\begin{aligned}
  H_i\ =\ &H_{i,(0,0,3k)} X_3^{3k}
                    +  \left(H_{i,(2,2,3k-4)} X_1^2X_3^{3k-4} +  H_{i,(4,2,3k-6)} X_1^4X_3^{3k-6}\right) X_2^2\\ 
                   &+ \cdots 
                      + \left(H_{i,(k-1,2k-2,3)} X_1^{k-1}X_3^3 + H_{i,(k+1,2k-2,1)} X_1^{k+1}X_3\right) X_2^{2k-2}.
\end{aligned}
\end{equation}
Now observe that $M(1,t,1) = t^2 + t^4 + 1 - 3t^2 = (t^2-1)^2$. Therefore, \eqref{E:1} specializes to
\begin{align*}
  M^{k}(1,t,1)\ =\ (t^2-1)^{2k}\ =\ \sum_{i=1}^r H_i(1,t,1)^2.
\end{align*}
and by Lemma \ref{R3} we have $H_i(1,t,1)= c_i(t^2-1)^{k}$ for some $c_i\in \mathbb{R}$.
On the other hand, from \eqref{E:4} we obtain
\begin{align*}
H_i(1,t,1)\ =\ H_{i,(0,0,3k)} +(H_{i,(2,2,3k-4)}  +  H_{i,(4,2,3k-6)})t^2 + \cdots + (H_{i,(k-1,2k-2,3)} + H_{i,(k+1,2k-2,1)}) t^{2k-2}.
\end{align*}
and so $\deg (H_i(1,t,1)) =\deg (c_i (t^2-1)^k)\le 2k-2$. Thus $c_i = 0$ and, from the constant term, $H_{i,(0,0,3k)}=H_i(0,0,1) = 0$ for all $i=1,\dots,r$. This contradicts \eqref{E:2}.
\end{proof}

In the case of even $k$, the monomials $X_1^{\alpha_1}X_2^{\alpha_2}X_3^{\alpha_3}$ entering $H_i$ 
with $\alpha_1,\alpha_2$ even include $X_3^{3k}$, $X_1^{2k}X_2^{k}$ and $X_1^{k}X_2^{2k}$, so
$H_i(1,t,1)$ has degree $2k$, which does not prevent $H_i(1,t,1)$ from being a multiple of $(t^2-1)^k$.

\begin{remark}
Using similar arguments, one can show stubbornness of the form $S\in \cE(P_{3,6})\cap \Delta_{3,6}$ from \eqref{eq:S,Q}.
\end{remark}

\subsection{Stengle's form}\label{sub:Stengle}

Stengle showed \cite{Sten} that the ternary sextic
\begin{align*}
 T\ =\ X_1^3X_3^3 + (X_2^2X_3 - X_1^3 - X_1X_3^2)^2     
\end{align*}
is stubborn. It is interesting to note that $T$ is not extremal, as we now explain. It is easy to check that $T$ has $2$ real zeros at $[0:0:1]$ and $[0:1:0]$ (see, for example, the proof of Proposition \ref{prop:Stengle} below).
By \eqref{eq:Stengle_delta} we have that $\delta_{[0:0:1]}(T)=3$.
In the next example we compute the delta invariant of $T$ at $[0:1:0]$ showing that $\delta_{[0:1:0]}(T)=6$.

\begin{example}\label{ex:Stengle2}
  The strict transforms of
  \begin{align}\label{eq:t}
    t\ =\ (x_3-x_1^3-x_1x_3^2)^2+x_1^3x_3^3
  \end{align}
under two consecutive blow-ups $x_3=x_1x_3'$ and $x_3'=x_1x_3''$ are given as
  \begin{align*}
    t'\ &=\ (x_3'-x_1^2-x_1^2x_3'^2)^2+x_1^4x_3'^3,\quad t''\ =\ (x_3''-x_1-x_1^3x_3''^2)^2+x_1^5x_3''^3
  \end{align*}
  and hence $\delta_{(0,0)}(t)=1+\delta_{(0,0)}(t')=1+1+\delta_{(0,0)}(t'')$.
  Blowing up with $x_3''=x_1x_3'''$ gives
  \begin{align}\label{eq:t'''}
    t'''\ &=\ (x_3'''-1-x_1^4x_3'''^2)^2+x_1^6x_3'''^3
  \end{align} 
  Since the first order infinitely near point of $t''=0$ at $(x_1,x_3'')=(0,0)$ is at $(x_1,x_3''')=(0,1)$, it is convenient to write $t'''$ in the coordinates $x_1=\tilde x_1$, $x_3'''=\tilde x_3+1$,
  \begin{align*}
    \tilde t\ &=\ (\tilde x_3-\tilde x_1^4(\tilde x_3+1)^2)^2+\tilde x_1^6(\tilde x_3+1)^3,
  \end{align*}
so that $\delta_{(0,1)}(t''')=\delta_{(0,0)}(\tilde t)$.
Now, the consecutive blow-ups $\tilde x_3=x_1\tilde x_3'$ and $\tilde x_3'=x_1\tilde x_3''$ give
\begin{align*}
  \tilde t'\ &=\ (\tilde x_3'-\tilde x_1^3(\tilde x_1\tilde x_3'+1)^2)^2+\tilde x_1^4(\tilde x_1\tilde x_3'+1)^3,\quad
  \tilde t''\ =\ (\tilde x_3''-\tilde x_1^2(\tilde x^2_1\tilde x_3''+1)^2)^2+\tilde x_1^2(\tilde x^2_1\tilde x_3''+1)^3
\end{align*}
Finally, $\tilde t''$ has an ordinary singularity at $(0,0)$ and $\delta_{(0,0)}(\tilde t)=1+\delta_{(0,0)}(\tilde t')=1+1+\delta_{(0,0)}(\tilde t'')=3$.
Summarizing, we obtain that $\delta_{(0,0)}(t)=2+\delta_{(0,0)}(t'')=3+\delta_{(0,1)}(t''')=3+\delta_{(0,0)}(\tilde t)=3+3=6$.
\end{example}

As a consequence, the total delta invariant (equivalently, the SOS-invariant) of $T$ is equal to $\delta(T)=\delta^{\,\textrm{sos}}(T)=3+6=9$. As it is less than $10$, by Remark \ref{rem:count_proved},  Stengle's form $T\in \partial P_{3,6}$ is not extremal.
Proposition \ref{prop:Stengle} below gives an explicit representation of $T$ as a convex combination of two nonnegative sextics that are not proportional to it.

\begin{proposition}\label{prop:Stengle}
The form $T\in \partial P_{3,6}$ is not extremal, that is, $T\notin \cE(P_{3,6})$.
\end{proposition}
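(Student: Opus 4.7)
The plan is to exhibit an explicit two-term decomposition $T = P_1 + P_2$ with $P_1, P_2 \in P_{3,6}$, neither summand proportional to $T$. Starting from Stengle's representation $T = B^2 + X_1^3 X_3^3$ with $B := X_2^2 X_3 - X_1^3 - X_1 X_3^2$, I consider the one-parameter family of splittings
\[
T\ =\ cB^2\ +\ \bigl((1-c)B^2 + X_1^3 X_3^3\bigr),\qquad c\in(0,1),
\]
and look for a value of $c$ for which $R_c := (1-c) B^2 + X_1^3 X_3^3$ is globally nonnegative; I will show $c=\tfrac12$ suffices.

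The main step is the nonnegativity of $R_{1/2}$. View $R_c$ as a quadratic polynomial in $Y := X_2^2$; its leading coefficient is $(1-c) X_3^2\ge 0$, and a short computation gives the discriminant in $Y$ as $-4(1-c)X_1^3 X_3^5$. When $X_1 X_3 \geq 0$ the discriminant is nonpositive, so $R_c\ge 0$ for all real $Y$, and in particular for $Y = X_2^2\ge 0$. When $X_1 X_3 < 0$ the parabola is upward-opening with vertex at $Y^* = X_1(X_1^2+X_3^2)/X_3 < 0$, so $R_c$ is increasing on $Y \geq 0$ and nonnegativity reduces to $R_c(0)\ge 0$; after cancelling $X_1^2$ this reads
\[
(1-c)(X_1^2+X_3^2)^2\ \geq\ |X_1|\,X_3^3.
\]
A one-variable optimization on the circle $X_1^2 + X_3^2 = 1$ shows that $|X_1|\,X_3^3/(X_1^2+X_3^2)^2$ attains its maximum $3\sqrt{3}/16$ at $|X_1|/X_3 = 1/\sqrt{3}$, and since $3\sqrt{3}/16 < \tfrac12$ the inequality holds with $c=\tfrac12$.

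Finally, neither summand of the resulting decomposition
\[
T\ =\ \tfrac{1}{2} B^2\ +\ \bigl(\tfrac{1}{2} B^2 + X_1^3 X_3^3\bigr)
\]
is a scalar multiple of $T$: the polynomials $B^2$ and $X_1^3 X_3^3$ are linearly independent in $\R[X_1,X_2,X_3]$ (for instance, $B^2$ contains the monomial $X_2^4 X_3^2$, which $X_1^3 X_3^3$ lacks), so matching coefficients in the basis $\{B^2, X_1^3 X_3^3\}$ rules out relations of the form $\tfrac12 B^2 = \lambda T$ or $\tfrac12 B^2 + X_1^3 X_3^3 = \mu T$. This exhibits $T$ as a nontrivial convex combination of two nonnegative ternary sextics, so $T \notin \cE(P_{3,6})$. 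The principal technical hurdle is the quadratic-in-$X_2^2$ argument reducing global nonnegativity of $R_c$ to the one-variable optimization yielding the constant $3\sqrt{3}/16$; once that constant is known to be less than $1$, any $c \in (0, 1 - 3\sqrt{3}/16]$ gives an explicit decomposition.
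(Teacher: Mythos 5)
Your proposal is correct and takes essentially the same approach as the paper: both decompose $T$ within the two-parameter family $c_1 B^2 + c_2 X_1^3X_3^3$ (the paper calls these $T_c$), and both reduce nonnegativity of the non-square summand to a one-variable optimization that locates the same critical ratio $|X_1|/|X_3|=1/\sqrt{3}$ and the same constant $3\sqrt{3}/16 = \sqrt{27/256}$. The paper computes the extremal $c$ (so $T_{\sqrt{256/27}}$ lies on $\partial P_{3,6}$) by dehomogenizing and factoring $T_c(X,0,1)$, whereas you pick the convenient value $c=\tfrac12$ and verify via the discriminant of the quadratic in $X_2^2$ --- a cosmetic difference; note in passing that in your displayed inequality the right side should read $|X_1|\,|X_3|^3$ rather than $|X_1|\,X_3^3$, though this does not affect the optimization or the conclusion.
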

\begin{proof}
Let $c>0$ be a positive real number and consider 
\begin{align*}
T_c\ =\ c X_1^3X_3^3 + (X_2^2X_3 - X_1^3 - X_1X_3^2)^2
\end{align*}
so that $T = T_1$. Stengle's argument \cite{Sten} shows that $T_c \notin \Sigma_{3,6}(\infty)$, whether or not it is nonnegative.

We have $T_c(X_1,X_2,0) = X_1^6 \ge 0$, so to check whether $T_c$ is nonnegative, it is enough to consider the dehomogenized polynomial
\begin{align*}
T_c(X_1,X_2,1)\ =\ c X_1^3 + (X_2^2 - X_1^3 - X_1)^2.
\end{align*}
Note that $T_c(X_1,X_2,1) \geq 0$ when $X_1 \geq 0$ and if $X_1 < 0$, then $-X_1^3-X_1 > 0$, so
$T_c(X_1,X_2,1) \geq T_c(X_1,0,1)$. Thus, $T_c(X_1,X_2,1) \geq 0$ for all $(X_1,X_2)\in \mathbb{R}^2$ if and only if
\begin{align*}
T_c(X,0,1)\ =\ c X^3 + (-X^3 - X)^2 = X^2(cX + (X^2 + 1)^2)\ \geq\ 0,\quad X\in \mathbb{R}.
\end{align*}
An easy calculation shows that the  largest value of $c>0$ for which it is possible is $c = \sqrt{256/27} \approx 3.079$ and 
\begin{align}\label{eq:256/27}
T_{\sqrt{256/27}}(X,0,1) = X^2 \left( X + \frac{1}{\sqrt{3}}\right)^2\left(X^2 - \frac{2}{\sqrt{3}} X+ 3\right)\ \geq\ 0.
\end{align}
Thus,  $T_{\sqrt{256/27}}$ is nonnegative and $T=T_1$ is a convex combination of $T_{\sqrt{256/27}}$ and $T_0$.
\end{proof}

\begin{remark}
  By the same computation as in Examples \ref{ex:Stengle} and \ref{ex:Stengle2}, we can show that $\delta_{[0:0:1]}(T_c)=3$ and $\delta_{[0:1:0]}(T_c)=6$ for any $c\neq 0$.
\end{remark}
It follows from the above proof that $T$ belongs to the relative interior of a face $\mathcal{F}\subset \partial P_{3,6}$ of the cone of nonnegative ternary sextics with $\dim \mathcal{F}\geq 2$.
We now show that the dimension of this face $\mathcal{F}$ is exactly two.

\begin{proposition}\label{prop:fact_T}
The unique smallest face $\mathcal{F}$ of $P_{3,6}$ containing $T$ in its relative interior is $2$-dimensional. It is generated by $T_0$ and $T_{\sqrt{256/27}}$ both of which are extremal in $P_{3,6}$. 
\end{proposition}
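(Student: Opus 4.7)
The plan is to exhibit $T$ in a $2$-dimensional face, verify that $T_0$ and $T_{\sqrt{256/27}}$ are both extremal in $P_{3,6}$, and finally rule out any additional extreme rays of $P_{3,6}$ lying in the smallest face $\mathcal{F}$ containing $T$. The first step is immediate from the proof of Proposition \ref{prop:Stengle}: one has $T = (1-c_*) T_0 + c_* T_{\sqrt{256/27}}$ with $c_* = \sqrt{27/256} \in (0,1)$, and since $T_0$ contains no $X_1^3 X_3^3$ monomial while $T_{\sqrt{256/27}}$ does, the two sextics are linearly independent, so $T$ lies in the relative interior of the $2$-dimensional cone they span and $\dim \mathcal{F} \geq 2$ whenever both generators are extremal.

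To establish extremality of $T_0 = f^2$ with $f = X_2^2 X_3 - X_1^3 - X_1 X_3^2$, I would first verify that $f$ is irreducible over $\mathbb{C}$: viewed as a quadratic in $X_2$, its discriminant is $4 X_1 X_3 (X_1 + iX_3)(X_1 - iX_3)$, a product of four distinct simple linear factors in $\mathbb{C}[X_1, X_3]$ and hence not a perfect square, so $f$ is irreducible in $\mathbb{C}(X_1, X_3)[X_2]$; coprimality of its leading and constant coefficients (in $X_2$) plus Gauss's lemma then yields irreducibility in $\mathbb{C}[X_1, X_2, X_3]$. Moreover $\cV(f)$ has the smooth real point $[0:0:1]$, at which $\nabla f = (-1,0,0) \neq 0$ and across which $f$ changes sign. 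For any decomposition $T_0 = P_1 + P_2$ with $P_i \in P_{3,6}$, the real Nullstellensatz applied to each $P_i$ (vanishing on the codimension-one manifold $\cV_\R(f)$ near $[0:0:1]$) gives $f \mid P_i$; combining with nonnegativity of $P_i$ and the sign-change of $f$ promotes this to $f^2 \mid P_i$, whence $P_i = c_i T_0$ for some $c_i \geq 0$. Regarding $T_{\sqrt{256/27}}$, the factorization \eqref{eq:256/27} exhibits, besides $[0:0:1]$ and $[0:1:0]$, an additional real double zero at $[-1:0:\sqrt 3]$. A direct Taylor expansion around this point (in the chart $X_3 = 1$, with shift $Y_1 = X_1 + 1/\sqrt 3$) yields the leading quadratic form $\frac{4}{3} Y_1^2 + \frac{8}{3\sqrt 3} X_2^2$, positive definite, so $[-1:0:\sqrt 3]$ is an ordinary singularity contributing $\delta = 1$. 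Together with $\delta_{[0:0:1]}(T_{\sqrt{256/27}}) = 3$ and $\delta_{[0:1:0]}(T_{\sqrt{256/27}}) = 6$ (the blow-up computations of Examples \ref{ex:Stengle} and \ref{ex:Stengle2} carry over verbatim for every $c \neq 0$), one obtains $\delta(T_{\sqrt{256/27}}) = 10$, so by the genus-degree formula $\cV(T_{\sqrt{256/27}})$ is rational. Since $T_{\sqrt{256/27}}$ is not a sum of squares (by Stengle's argument as recalled in the proof of Proposition \ref{prop:Stengle}), the characterization of $\cE(P_{3,6}) \setminus \Sigma_{3,6}$ in \cite[Thm. 2]{blekherman_hauenstein_ottem_ranestad_sturmfels_2012} yields extremality.

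The hardest step will be to rule out additional extreme rays of $P_{3,6}$ in $\mathcal{F}$. Any $P \in \cE(P_{3,6}) \cap \mathcal{F}$ satisfies $\lambda T - P \in P_{3,6}$ for some $\lambda > 0$ and hence $\cV_\R(P) \supseteq \cV_\R(T)$. If $P = H^2 \in \Sigma_{3,6}$, then $H \in \bigcap_{X \in \cV_\R(T)} E(T, X)$; tracking the vanishing constraints through the blow-up resolutions at $[0:0:1]$ and $[0:1:0]$ (Examples \ref{ex:Stengle0} and \ref{ex:Stengle2}), one verifies that this intersection is one-dimensional, spanned by $f$, forcing $P \propto T_0$. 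If $P \notin \Sigma_{3,6}$, Lemma \ref{lem:Bruce's_delta=delta} gives $\delta(P) = 10$, and the local constraints imposed by $P \leq \lambda T$ at the two common zeros exhaust $3 + 6 = 9$ of this count, so $P$ must have exactly one more real singularity, which must be ordinary and (by the domination $P \leq \lambda T$) is pinned to $[-1:0:\sqrt 3]$, giving $P \propto T_{\sqrt{256/27}}$. The main obstacle is verifying these two uniqueness statements --- most crucially, the one-dimensionality of $\bigcap_X E(T, X)$, which reduces to careful bookkeeping of the constraints imposed at each infinitely near point.
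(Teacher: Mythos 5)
Your decomposition $T = (1-c_*)T_0 + c_* T_{\sqrt{256/27}}$ with $c_* = \sqrt{27/256}$ and the extremality arguments for the two generators are sound and essentially match the paper's: for $T_0$ you spell out the irreducibility/sign-change argument the paper only alludes to, and for $T_{\sqrt{256/27}}$ you compute $\delta = 3 + 6 + 1 = 10$ exactly as the paper does (your Taylor expansion at $[-1:0:\sqrt 3]$ checks out). Where you part ways with the paper, and where your proposal has real gaps, is the third step, which is indeed the hardest.

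The paper handles that step by \emph{direct coefficient tracking}: it takes an arbitrary $F$ in $\mathcal{F}$, writes $T = F + \tilde F$, notes (via Reznick's Newton-polytope theorem and Lemma~\ref{lem:nonnegative_blow}) that the strict transforms $f_1''$ and $f_2'''$ must inherit the singularity structure of $T$'s transforms, and reads off from the resulting linear equations on the coefficients that $F$ is necessarily a linear combination of $f^2$ and $X_1^3X_3^3$. This is a closed computation and it proves the full statement for every $F \in \mathcal{F}$, not just extremal ones. Your plan instead tries to classify $\cE(P_{3,6}) \cap \mathcal{F}$, and several of the pivotal assertions are left unverified or are incorrect as stated. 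First, the one-dimensionality of $\bigcap_{\X} E(T,\X)$ in the SOS case is merely asserted; from $\delta^{\,\textrm{hd}}(T,\X) \le \delta_{\X}(T)$ one only gets $\dim \bigcap_{\X} E(T,\X) \ge 1$, and getting equality requires precisely the kind of coefficient bookkeeping the paper carries out. Second, in the non-SOS case, the domination $0 \le P \le \lambda T$ does \emph{not} force $\delta_{[0:0:1]}(P) = 3$ and $\delta_{[0:1:0]}(P) = 6$: it forces the blow-up of $P$ to follow $T$'s tangent directions (by comparing leading quadratics at each stage), but $P$ could a priori be \emph{more} degenerate there, so you would only get inequalities $\ge 3$ and $\ge 6$, and you have not excluded the case $\delta_{[0:0:1]}(P) + \delta_{[0:1:0]}(P) = 10$ with no extra zero at all. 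Third, the claim that any extra zero is ``pinned to $[-1:0:\sqrt 3]$ by the domination $P \le \lambda T$'' is simply wrong: $T = T_1$ does not vanish at $[-1:0:\sqrt 3]$ (one checks $T_1(-1/\sqrt 3,0,1) = -1/(3\sqrt 3) + 16/27 > 0$), so the inequality $P \le \lambda T$ imposes no constraint whatsoever on the location of a zero of $P$ outside $\cV_{\R}(T)$. In the paper's treatment the extra zero of $T_{\sqrt{256/27}}$ emerges as a \emph{consequence} of the linear constraints, not as an input. And finally, even if you did pin down the singularity data, concluding $P \propto T_{\sqrt{256/27}}$ would itself require an argument (e.g., via the local SOS-support again). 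In short, steps one and two of your outline are fine and roughly parallel the paper, but the third step needs to be replaced with something closer to the paper's explicit blow-up computation on the coefficients of $F$.
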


\begin{proof}
 The idea of the proof is to show that a form $F\in \partial P_{3,6}$ contained in the relative interior of the face $\mathcal{F}\subset P_{3,6}$, must satisfy $26$ linear conditions, which define the plane spanned by $T_0$ and $T_{\sqrt{256/27}}$. 
  If $F=\sum_{\vert\alpha\vert=6} F_{\alpha}\X^{\alpha}\in \mathcal{F}$ is such a nonnegative sextic (written in the monomial basis), then we can write $T=F+\tilde F$ for some other nonnegative $\tilde F\in \mathcal{F}$.
 By \cite[Thm. $1$]{R} we have that $\New(F)$ is contained in $\New(T)=\textrm{conv}((6,0,0),(2,0,4),(0,4,2))$. Thus, we can write (see Figure \ref{fig:A5})
  \begin{align*}
    F\ &=\ F_{(2,0,4)}X_1^2X_3^4+F_{(1,2,3)}X_1X_2^2X_3^3+F_{(0,4,2)}X_2^4X_3^2+F_{(3,0,3)}X_1^3X_3^3+F_{(2,1,3)}X_1^2X_2X_3^3\\ &+F_{(4,0,2)}X_1^4X_3^2+F_{(3,1,2)}X_1^3X_2X_3^2+F_{(2,2,2)}X_1^2X_2^2X_3^2+F_{(1,3,2)}X_1X_2^3X_3^2+F_{(5,0,1)}X_1^5X_3\\ &+F_{(4,1,1)}X_1^4X_2X_3+F_{(3,2,1)}X_1^3X_2^2X_3+F_{(6,0,0)}X_1^6.
  \end{align*}
  Let us first treat $[0:0:1]$ and consider dehomogenizations $f_1(x_1,x_2)=F(x_1,x_2,1)$, $\tilde f_1(x_1,x_2)=\tilde F(x_1,x_2,1)$, $t_1(x_1,x_2)=T(x_1,x_2,1)$.
  The strict transform of $f_1$ (in the coordinates $(x_1',x_2)$ with $x_1=x_1'x_2$) is
 {\small\begin{align*}
   f_1'\ &=\ F_{(2,0,4)}x_1'^2+F_{(1,2,3)}x_1'x_2+F_{(0,4,2)}x_2^2 +F_{(3,0,3)}x_1'^3x_2+F_{(2,1,3)}x_1'^2x_2+F_{(4,0,2)}x_1'^4x_2^2+F_{(3,1,2)}x_1'^3x_2^2\\ &+F_{(2,2,2)}x_1'^2x_2^2+F_{(1,3,2)}x_1'x_2^2+F_{(5,0,1)}x_1'^5x_2^3+F_{(4,1,1)}x_1'^4x_2^3+F_{(3,2,1)}x_1'^3x_2^3+F_{(6,0,0)}x_1'^6x_2^4                                  
 \end{align*}}
The second blow-up at $(x_1',x_2)=(0,0)$ (in the coordinates $(x_1'',x_2)$, where $x_1'=x_1''x_2$) reveals
{\small\begin{align*}
  f_1''\ &=\ F_{(2,0,4)}x_1''^2+F_{(1,2,3)}x_1''+F_{(0,4,2)} +F_{(3,0,3)}x_1''^3x^2_2+F_{(2,1,3)}x_1''^2x_2+F_{(4,0,2)}x_1''^4x_2^4+F_{(3,1,2)}x_1''^3x_2^3\\ &+F_{(2,2,2)}x_1''^2x_2^2+F_{(1,3,2)}x_1''x_2 +F_{(5,0,1)}x_1''^5x_2^6 +F_{(4,1,1)}x_1''^4x_2^5+F_{(3,2,1)}x_1''^3x_2^4+F_{(6,0,0)}x_1''^6x_2^8
\end{align*}}We also have $t''_1=f''_1+\tilde f_1''$. By Lemma \ref{lem:nonnegative_blow} and Example \ref{ex:Stengle}, $f_1''$ must have a singular point at $(x_1'',x_2)=(1,0)$, which gives $F_{(2,0,4)}+F_{(1,2,3)}+F_{(0,4,2)}=0$, $2 F_{(2,0,4)}+F_{(1,2,3)}=0$ and $F_{(2,1,3)}+F_{(1,3,2)}=0$ or, equivalently,
\begin{align}\label{eq:localFFF}
  F_{(2,0,4)}\ =\ F_{(0,4,2)},\quad F_{(1,2,3)}\ =\ -2 F_{(0,4,2)},\quad F_{(1,3,2)}\ =\ -F_{(2,1,3)}.
\end{align}

We now turn to the point $[0:1:0]$ and consider $f_2(x_1,x_3)=F(x_1,1,x_3)$, $\tilde f_2(x_1,x_3)=\tilde F(x_1,1,x_3)$, $t_2(x_1,x_3)=T(x_1,1,x_3)$.
Following a sequence of blow-ups $x_3=x_1x_3'$, $x_3'=x_1x_3''$ and  $x_3''=x_1x_3'''$  from Example \ref{ex:Stengle2} we obtain

     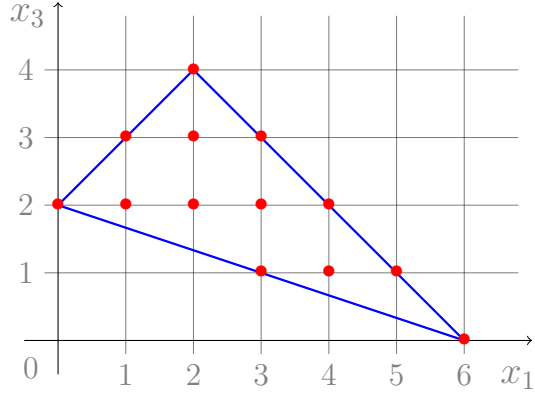
\begin{figure}[h]
    \centering
  \begin{tikzpicture}[xscale=0.09,yscale=0.09,domain=0.125:220,samples=400]
    \node[opacity=0.5] at (-4,-4) {$0$};

    \draw[->] (-5,0) -- (70,0) node[below] {};
    \draw[->] (0,-5) -- (0,50) node[left] {};
    \foreach \i in {1,2,...,6} {
        \draw[opacity=0.5] (\i*10,48) -- (\i*10,-2) node[below] {$\i$};
    }
    \foreach \i in {1,2,...,4} {
        \draw[opacity=0.5] (68,\i*10) -- (-2,\i*10) node[left] {$\i$};
    }
    \draw[line width = 0.3mm, blue] (60,0) -- (20,40);
    \draw[line width = 0.3mm, blue] (60,0) -- (0,20);
    \draw[line width = 0.3mm, blue] (0,20) -- (20,40);



  \node[opacity=0.5] at (68,-5.5) {{\large $x_1$}};
  \node[opacity=0.5] at (-4.5,48) {{\large $x_3$}};
   
    \draw[red] (0,20) node {$\bullet$};
    \draw[red] (10,20) node {$\bullet$};
    
    \draw[red] (60,0) node {$\bullet$};
    \draw[red] (30,10) node {$\bullet$};
    \draw[red] (40,10) node {$\bullet$};
    \draw[red] (50,10) node {$\bullet$};
    \draw[red] (20,20) node {$\bullet$};
    \draw[red] (30,20) node {$\bullet$};
    \draw[red] (40,20) node {$\bullet$};
    \draw[red] (10,30) node {$\bullet$};
    \draw[red] (20,30) node {$\bullet$};
    \draw[red] (30,30) node {$\bullet$};
    \draw[red] (20,40) node {$\bullet$};
  \end{tikzpicture}   
        \caption{\centering The maximal support and the Newton polytope of a dehomogenization $f_2(x_1,x_3)=F(x_1,1,x_3)$ of a sextic $F\in \mathcal{F}$.}  
    \label{fig:A5}
  \end{figure}


{\small \begin{align*}
  f_2'\,&=\,F_{(0,4,2)}x_3'^2+F_{(3,2,1)}x_1^2x_3'+F_{(6,0,0)}x_1^4-F_{(2,1,3)}x_1x_3'^2-2F_{(0,4,2)}x_1^2x_3'^3+F_{(2,2,2)}x_1^2x_3'^2+F_{(2,1,3)}x_1^3x_3'^3\\ &+F_{(3,1,2)}x_1^3x_3'^2+F_{(4,1,1)}x_1^3x_3'
        +F_{(0,4,2)}x_1^4x_3'^4+F_{(3,0,3)}x_1^4x_3'^3+F_{(4,0,2)}x_1^4x_3'^2+F_{(5,0,1)}x_1^4x'_3,\\
  f_2''\,&=\, F_{(0,4,2)}x_3''^2+F_{(3,2,1)}x_1x_3''+F_{(6,0,0)}x_1^2-F_{(2,1,3)}x_1x_3''^2-2F_{(0,4,2)}x_1^3x_3''^3+F_{(2,2,2)}x_1^2x_3''^2\\ &+F_{(2,1,3)}x_1^4x_3''^3+F_{(3,1,2)}x_1^3x_3''^2+F_{(4,1,1)}x_1^2x_3''+F_{(0,4,2)}x_1^6x_3''^4 +F_{(3,0,3)}x_1^5x_3''^3+F_{(4,0,2)}x_1^4x_3''^2\\ &+F_{(5,0,1)}x_1^3x''_3,\\
  f_2'''\,&=\, F_{(0,4,2)}x_3'''^2+F_{(3,2,1)}x_3'''+F_{(6,0,0)}-F_{(2,1,3)}x_1x_3'''^2-2F_{(0,4,2)}x_1^4x_3'''^3+F_{(2,2,2)}x_1^2x_3'''^2\\
          &+F_{(2,1,3)}x_1^5x_3'''^3+F_{(3,1,2)}x_1^3x_3'''^2+F_{(4,1,1)}x_1x_3'''+F_{(0,4,2)}x_1^8x_3'''^4+F_{(3,0,3)}x_1^6x_3'''^3+F_{(4,0,2)}x_1^4x_3'''^2\\ & +F_{(5,0,1)}x_1^2x'''_3.
\end{align*}}Since $t_2'''=f_2'''+\tilde f_2'''$, Lemma \ref{lem:nonnegative_blow} and Example \ref{ex:Stengle2} imply that $f_2'''$ must have a singular point at $(x_1,x_3''')=(0,1)$, that is,
$F_{(0,4,2)}+F_{(3,2,1)}+F_{(6,0,0)}=0$, $-F_{(2,1,3)}+F_{(4,1,1)}=0$ and $2F_{(0,4,2)}+F_{(3,2,1)}=0$ or, equivalently, 
\begin{align}\label{eq:3cond}
  F_{(6,0,0)}\ =\ F_{(0,4,2)},\quad F_{(3,2,1)}\ =\ -2F_{(0,4,2)},\quad F_{(4,1,1)}\ =\ F_{(2,1,3)}.
\end{align}
Looking at the leading terms of \eqref{eq:t'''} at $(0,1)$ (cf. Figure \ref{fig:A5}), we require vanishing of
{\begin{align*}
  \frac{\partial^2 f_2'''}{\partial x_1^2}(0,1)\ &=\ 2(F_{(2,2,2)}+F_{(5,0,1)}),\qquad
  \frac{\partial^2 f_2'''}{\partial x_1 \partial x_3'''}(0,1)\ =\ -2F_{(2,1,3)}+F_{(4,1,1)},\\  
  \frac{\partial^3 f_2'''}{\partial x_1^3}(0,1)\ &=\ 6F_{(3,1,2)},\qquad \frac{\partial^3 f_2'''}{\partial x_1^2 \partial x_3'''}(0,1)\ =\ 4 F_{(2,2,2)}+2F_{(5,0,1)},\\ \frac{\partial^4 f_2'''}{\partial x_1^4}(0,1)\ &=\ 4!(-2F_{(0,4,2)}+F_{(4,0,2)}),\qquad \frac{\partial^5 f_2'''}{\partial x_1^5}(0,1)\ =\ 5!F_{(2,1,3)}
\end{align*}}
Combining these conditions with  \eqref{eq:3cond} we update $F$:
\begin{align*}
 F\ &=\ F_{(0,4,2)}(X_1^2X_3^4-2X_1X_2^2X_3^3+X_2^4X_3^2+2X_1^4X_3^2-2X_1^3X_2^2X_3+X_1^6)+F_{(3,0,3)}X_1^3X_3^3\\ &=\ F_{(0,4,2)}(X_2^2X_3-X_1^3-X_1X_3^2)^2+F_{(3,0,3)}X_1^3X_3^3.
\end{align*}
Since $F\in \mathcal{F}\subset \partial P_{3,6}$ is nonnegative, $F_{(0,4,2)}$ cannot be zero (and hence has to be positive).
On the other hand, $F_{(3,0,3)}=F(1,\sqrt{2},1)$ must be nonnegative. It follows that $F$ is proportional to $T_c$ for some $c\geq 0$ and so the face $\mathcal{F}$ is two-dimensional.

It is easy to argue that $T_0$, which is a single square of an indefinite cubic, is an extreme ray of $P_{3,6}$. The form $T_{\sqrt{256/27}}$ has zeroes at $[0:0:1]$ and $[0:1:0]$, with $\delta_{[0:0:1]}(T_{\sqrt{256/27}})=3$ and $\delta_{[0:1:0]}(T_{\sqrt{256/27}})=6$. Additionally, the form $T_{\sqrt{256/27}}$ has also a round zero at $[1:0:-\sqrt{3}]$ (cf. \eqref{eq:256/27}) which makes
  \begin{align*}
    \delta(T_{\sqrt{256/27}})\ =\ \delta_{[0:0:1]}(T_{\sqrt{256/27}})+\delta_{[0:1:0]}(T_{\sqrt{256/27}})+\delta_{[1:0:-\sqrt{3}]}(T_{\sqrt{256/27}})\ =\ 3+6+1\ =\ 10.
  \end{align*}
Therefore, $T_{\sqrt{256/27}}\in \cE(P_{3,6})$ is extremal, and the face $\mathcal{F}$ is generated by two extreme rays $T_0$ and $T_{\sqrt{256/27}}$.
\end{proof}






\section{$n$-variate forms}\label{sec:higher_n}


We now discuss $n$-variate forms ($n>3$) with the perspective of understanding the relation between $\Sigma_{n,d}(\infty)$ and $P_{n,d}$. We first show that the quaternary quartic \eqref{eq:S,Q} is stubborn.

\begin{theorem}\label{thm:Q}
The quaternary quartic $Q$ from \eqref{eq:S,Q} is stubborn.
\end{theorem}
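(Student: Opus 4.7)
The plan is to avoid any direct analysis of the real zeros of $Q$ in $\mathbb{P}_{\R}^3$ and instead reduce stubbornness of $Q$ to the already-established stubbornness of the Motzkin form $M$ (Corollary \ref{cor:main}). The bridge is a polynomial substitution under which $Q$ restricts exactly to the dehomogenization of $M$.

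First I would verify by direct expansion the substitution identity
\[
Q(s,\,t,\,st,\,1)\ =\ 1 + s^2 t^2 + s^4 t^2 + s^2 t^4 - 4 s^2 t^2\ =\ 1 + s^4 t^2 + s^2 t^4 - 3 s^2 t^2\ =\ M(s,t,1).
\]
Setting $m(s,t) := M(s,t,1)$, this identity says that on the rational surface parametrized by $(X_1,X_2,X_3,X_4) = (s,t,st,1)$, the quaternary quartic $Q$ pulls back to the dehomogenized Motzkin polynomial.

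Next, suppose for contradiction that $Q^{2k+1} \in \Sigma_{4,4(2k+1)}$ for some $k \ge 0$, and write $Q^{2k+1} = \sum_{i=1}^r H_i^2$ with $H_i \in \R[X_1, X_2, X_3, X_4]$. Specializing $(X_1, X_2, X_3, X_4) = (s, t, st, 1)$ on both sides yields the polynomial identity
\[
m(s,t)^{2k+1}\ =\ \sum_{i=1}^r H_i(s,\,t,\,st,\,1)^2
\]
in $\R[s,t]$. Thus $m^{2k+1}$ is a sum of squares of bivariate polynomials, which by Remark \ref{rem:hom} is equivalent to $M^{2k+1} \in \Sigma_{3, 6(2k+1)}$, contradicting the stubbornness of $M$ from Corollary \ref{cor:main}.

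The only nonroutine step is spotting the substitution; once found, the rest is a single polynomial specialization followed by an invocation of results already in the paper, so in particular no quaternary analogue of the SOS-invariant is needed for this single form. A useful sanity check is that the four real zeros $(\pm 1, \pm 1)$ of $m$ map under $(s,t) \mapsto (s,t,st,1)$ to four of the affine real zeros $(\pm 1, \pm 1, \pm 1, 1)$ of $Q$ (precisely those with $X_1 X_2 X_3 X_4 = 1$), so the specialization is geometrically natural and indeed captures a set of round zeros of $Q$ sufficient to transport the obstruction.
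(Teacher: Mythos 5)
Your proof is correct and is essentially identical to the paper's: both use the substitution $(X_1,X_2,X_3,X_4) \mapsto (x_1,x_2,x_1x_2,1)$ to pull $Q$ back to the dehomogenized Motzkin polynomial, then specialize an alleged SOS decomposition of $Q^{2k+1}$ to contradict the stubbornness of $M$.
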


\begin{proof}
If we dehomogenize $Q = X_4^4 + X_1^2X_2^2 + X_1^2X_3^2 + X_2^2X_3^2 -4X_1X_2X_3X_4$ by setting $X_4 = 1$, and simultaneously set $X_3 = x_1x_2$, then  (c.f. \cite[Prop. 3.7]{CL})
\begin{align*}
  Q(x_1,x_2,x_1x_2,1) &=\ 1 + x_1^2x_2^2 + x_1^4x_2^2 + x_1^2x_2^4 - 4x_1^2x_2^2\ =\ M(x_1,x_2,1).
\end{align*}
Thus, if $Q^k = \sum_{i=1}^r H_i^2\in \Sigma_{4,4k}$ was a sum of squares for some odd $k\geq 1$, then so would be
\begin{align*}
M^k(x_1,x_2,1)\ =\ Q^k(x_1,x_2,x_1x_2,1)\ =\ \sum_{i=1}^r H_i(x_1,x_2,x_1x_2,1)^2,
\end{align*}
which we know to be impossible by Theorem \ref{thm:main} and Remark \ref{rem:hom}. 
\end{proof}



\smallskip

As a consequence of Theorem \ref{thm:main}, results from \cite{Brugall2018RealAC} and Theorem \ref{thm:Q} we now obtain that $\Sigma_{n,d}(\infty)=P_{n,d}$ if and only if $\Sigma_{n,d}=P_{n,d}$, see Figure \ref{fig:table}.

\begin{theorem}\label{thm:general}
Let $\Delta_{n,d}\neq \emptyset$, that is, $n\geq 3$ and $d\geq 6$ or $n\geq 4$ and $d\geq 4$. Then there exists a stubborn form $P\in P_{n,d}$.\end{theorem}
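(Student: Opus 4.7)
My plan is to split into cases according to Hilbert's classification: $\Delta_{n,d}\neq\emptyset$ precisely for $n=3$, $d\geq 6$ even, and for $n\geq 4$, $d\geq 4$ even. I would produce explicit stubborn forms in a few base cases and then bootstrap to all the others by a lifting argument in the number of variables.

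For $n=3$, $d=6$, the Motzkin form is stubborn by Corollary \ref{cor:main}. For $n=3$ and $d\geq 8$ even, I would invoke the combinatorial patchworking construction of Brugall\'e et al.\ \cite[Thm.~4.5]{Brugall2018RealAC}, which yields nonnegative ternary forms of degree $d$ with strictly more than $d^2/4$ isolated real zeros; by Corollary \ref{cor:higher_d} any such form is stubborn. For $n=4$, $d=4$, Theorem \ref{thm:Q} provides the stubborn form $Q$. These handle all the base cases needed.

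To cover the remaining cases (namely $n\geq 4$ with $d\geq 6$, and $n\geq 5$ with $d=4$), I would state and iterate the following lifting lemma: \emph{if $P\in P_{n,d}$ is stubborn, then the form $\tilde{P}\in P_{n+1,d}$ defined by $\tilde{P}(X_1,\dots,X_{n+1}):=P(X_1,\dots,X_n)$ is also stubborn.} The argument is a one-line specialization: were $\tilde{P}^{2k+1}=\sum_i H_i^2$ for some $H_i\in F_{n+1,(2k+1)d/2}$, then setting $X_{n+1}=0$ would give $P^{2k+1}=\sum_i H_i(X_1,\dots,X_n,0)^2$, contradicting the stubbornness of $P$ in $n$ variables. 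Iterating this lemma takes a stubborn ternary form of degree $d$ up to any $n\geq 4$ and takes $Q$ up to any $n\geq 5$, which exhausts the remaining $(n,d)$ with $\Delta_{n,d}\neq\emptyset$.

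I do not anticipate a genuine obstacle here: once Theorem \ref{thm:main}, Theorem \ref{thm:Q} and the Brugall\'e et al.\ construction are in hand, the residual work is the lifting observation, essentially a trivial substitution. The only point to verify is that $d$ is even throughout (which it must be for any nontrivial $P\in P_{n,d}$), so that all SOS summand spaces $F_{\cdot,(2k+1)d/2}$ are well defined. As an aside, in place of the patchworking construction one could also invoke Proposition \ref{prop:ext} applied to the Motzkin form to obtain stubborn ternary forms of every even degree $d\geq 8$, which would give a self-contained route not relying on \cite{Brugall2018RealAC}.
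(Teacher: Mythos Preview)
Your proposal is correct and follows essentially the same approach as the paper: establish stubborn base cases in $(3,d)$ for $d\geq 6$ (via Theorem~\ref{thm:main} together with either Proposition~\ref{prop:ext} or the Brugall\'e et al.\ construction plus Corollary~\ref{cor:higher_d}) and in $(4,4)$ via Theorem~\ref{thm:Q}, then lift to larger $n$ by the specialization $X_{n+1}=\dots=0$. The paper carries out exactly this argument, with the lifting step written inline rather than stated as a separate lemma.
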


\begin{proof}
 By Proposition \ref{prop:ext} or by combining Theorems \ref{thm:main} and \ref{thm:higher_d} there are stubborn forms $P\in \Delta_{3,d}$ for any even $d\geq 6$.
  By regarding such forms as forms in $P\in P_{n,d}\supset P_{3,d}$ in $n\geq 4$ variables, we see that $P^k$ cannot be a sum of squares whatever odd $k\geq 1$ one takes.
  Indeed, if $P^k$ was in $\Sigma_{n,d}$, then by setting $X_4=\dots=X_n=0$ we would have that $P^k=P^k|_{X_4=\dots=X_n=0}$ is in $\Sigma_{3,d}$, which is a contradiction.

  Similarly, considering the quaternary quartic $Q\in \Delta_{4,4}$ from \eqref{eq:S,Q}, which by Theorem \ref{thm:Q} satisfies $Q\notin \Sigma_{4,4}(\infty)$, and regarding it as a form $Q\in P_{n,4}\supset P_{4,4}$ in $n\geq 5$ variables, we obtain that $Q^k$ is not a sum of squares for any odd $k\geq 1$. 
\end{proof}

\begin{figure}[h]\centering
{\large\begin{tabular}{ |c|c|c|c|c| } 
 \hline
 $n\ \backslash\ d$ & 2 & 4 & 6 & $\geq 8$ \\
 \hline
 2  &  $\checkmark$ & $\checkmark$ & $\checkmark$ & $\checkmark$  \\ \hline
 3  &  $\checkmark$ & $\checkmark$ &  $\times$ & $\times$  \\
   \hline
  4   & $\checkmark$ & $\times$ & $\times$ & $\times$ \\
  \hline
  $\geq 5$ & $\checkmark$ & $\times$ & $\times$ & $\times$\\
 \hline
\end{tabular}}
\caption{Cases $(n,d)$ when $\Sigma_{n,d}(\infty)=P_{n,d}$ or, equivalently, when $\Sigma_{n,d}=P_{n,d}$.}\label{fig:table}
\end{figure}

Instead of trivially regarding a ternary form $P\in P_{3,d}\subset P_{n,d}$, $P\notin \Sigma_{3,d}(\infty)$, as a form in $n$ variables, one can consider an $n$-variate form $\tilde P(\X,\tilde \X):=P(\X+ A\tilde\X)$, where $\X=(X_1,X_2,X_3)$, $\tilde\X=(X_4,\dots, X_n)$ and $A$ being any $3\times (n-3)$ matrix. Then $\tilde P\notin \Sigma_{n,d}(\infty)$ by the same argument as in the proof of Theorem \ref{thm:general}.

An example of a stubborn nonnegative form in $5$ variables which cannot be obtained in the way described above is the Horn form $F = \bigg(\sum_{i=1}^5 X_i^2\bigg)^2 - 4\ \sum_{i=1}^5 X_i^2X_{i+1}^2$. The proof of the following result relies on the idea from \cite[p.$\,25$]{Diananda1962OnNF} (cf. \cite[Sect. $4$]{Powers2019ANO}).

%

\begin{theorem}\label{thm:Horn}
  The Horn form $F = \left(\sum_{i=1}^5 X_i^2\right)^2 - 4\ \sum_{i=1}^5 X_i^2X_{i+1}^2$ is stubborn.
\end{theorem}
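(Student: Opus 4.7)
The plan is to exploit the combinatorial structure of the $5$-cycle $C_5$ underlying the Horn form by restricting $F$ to five small coordinate subspaces and then combining the resulting constraints into a monomial-ideal contradiction. The key initial observation, verified by direct computation and cyclic permutation, is that for each $j\in\{1,\ldots,5\}$ (indices mod $5$),
\[
F|_{X_{j+3}=X_{j+4}=0} \;=\; h_j^{\,2},\qquad h_j \;:=\; X_j^2 - X_{j+1}^2 + X_{j+2}^2,
\]
where each $h_j$ is an irreducible indefinite ternary quadratic form that changes sign across its real zero locus.

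Suppose for contradiction that $F^k=\sum_{i=1}^r H_i^{\,2}$ with $k\geq 1$ odd. Restricting this identity to $\{X_{j+3}=X_{j+4}=0\}$ gives $h_j^{\,2k}=\sum_i g_{i,j}^{\,2}$ where $g_{i,j}:=H_i|_{X_{j+3}=X_{j+4}=0}\in\mathbb{R}[X_j,X_{j+1},X_{j+2}]$. Evaluating at any nonzero real zero of $h_j$ forces $g_{i,j}$ to vanish on $V_{\mathbb{R}}(h_j)$; since $h_j$ is real-irreducible and sign-changing, one has $I(V_{\mathbb{R}}(h_j))=(h_j)$, so $h_j \mid g_{i,j}$. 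Writing $g_{i,j}=h_j\tilde g_{i,j}$ and cancelling $h_j^{\,2}$ yields $h_j^{\,2(k-1)}=\sum_i\tilde g_{i,j}^{\,2}$, and iterating this reduction $k$ times produces $g_{i,j}=c_{i,j}\,h_j^{\,k}$ for some constant $c_{i,j}\in\mathbb{R}$. Now compare the two expressions for $H_i$ on the $2$-dimensional intersection $\{X_{j+3}=X_{j+4}=X_j=0\}$: the $j$-th restriction contributes $c_{i,j}(X_{j+2}^2-X_{j+1}^2)^k$, while the $(j+1)$-th restriction contributes $c_{i,j+1}(X_{j+1}^2-X_{j+2}^2)^k$. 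Equating these and using that $k$ is odd yields $c_{i,j+1}=-c_{i,j}$; cycling five steps around $C_5$ produces $c_{i,1}=-c_{i,1}$, hence $c_{i,j}=0$ for every $i,j$.

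It follows that each $H_i$ vanishes on the union of the five coordinate subspaces $\{X_{j+3}=X_{j+4}=0\}$, so $H_i$ lies in the monomial ideal $I:=\bigcap_{j=1}^{5}(X_{j+3},X_{j+4})$. A monomial $X^\alpha$ lies in $I$ precisely when its support is a vertex cover of $C_5$, and the minimal generators of $I$ are the five squarefree cubic monomials $X_aX_bX_c$ indexed by the minimal vertex covers $\{a,b,c\}$ of $C_5$. Therefore $F^k=\sum_i H_i^{\,2}\in I^2\subseteq I$. However, the monomial $X_1^{4k}$ appears in $F^k$ with coefficient $1$, since it arises uniquely by taking $X_1^4$ from each of the $k$ copies of $F$, and its support $\{1\}$ is not a vertex cover of $C_5$. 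Since no generator of $I$ divides $X_1^{4k}$, the coefficient of $X_1^{4k}$ in any element of $I$ must be zero, contradicting $F^k\in I$.

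The main obstacle is the algebraic step that upgrades the identity $h_j^{\,2k}=\sum_i g_{i,j}^{\,2}$ to the divisibility $h_j^{\,k}\mid g_{i,j}$: one must invoke that $I(V_{\mathbb{R}}(h_j))=(h_j)$, which holds here because $h_j$ is irreducible in $\mathbb{R}[X]$ and changes sign across its real vanishing locus, so the inductive divide-out-$h_j$ argument terminates cleanly after $k$ steps. Everything else is bookkeeping on the cyclic $\mathbb{Z}/5\mathbb{Z}$ combinatorics, with the odd parity of $k$ precisely supplying the sign flip $c_{i,j+1}=-c_{i,j}$ needed to close the five-term cycle and annihilate all constants simultaneously.
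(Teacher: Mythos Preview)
Your proof is correct and follows essentially the same route as the paper: restrict $F^k$ to the five coordinate $3$-planes to see each $H_i|_{X_{j+3}=X_{j+4}=0}$ is a scalar multiple of $h_j^{\,k}$, use the cyclic structure and odd parity of $k$ to force all these scalars to vanish, and then contradict the nonzero coefficient of $X_1^{4k}$ in $F^k$. The only cosmetic differences are that the paper compares coefficients of the pure powers $X_j^{2k}$ directly (rather than restricting to the $2$-dimensional overlaps) and concludes immediately from $H_{i1}=0$ that $\sum_i H_{i1}^2=0$, whereas you package the same conclusion via the monomial ideal of vertex covers of $C_5$---a correct but slightly heavier formulation of the identical contradiction.
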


\begin{proof}
Viewing the subscripts cyclically modulo 5, one sees that the coefficient of $X_i^2X_j^2$ (for $i\neq j$) in $F$ is $-2$ 
(resp. 2) if $\vert i-j\vert = 1$ (resp. $\vert i-j\vert = 2$) and 
\begin{align*}
F(X_1,X_2,X_3,X_4,X_5)\ &=\ F(X_2,X_3,X_4,X_5,X_1)\ =\ F(X_3,X_4,X_5,X_1,X_2)\\ &=\ F(X_4,X_5,X_1,X_2,X_3)\ =\ F(X_5,X_1,X_2,X_3,X_4), 
\end{align*}
that is, $F$ is cyclically symmetric.
We have an alternative representation
\begin{align}\label{eq:alt_rep}
F(X_1,\dots,X_5) = (X_1^2-X_2^2+X_3^2-X_4^2+X_5^2)^2 + 4(X_2^2-X_1^2)X_5^2 + 4X_1^2X_4^2, 
\end{align}
from which it is straightforward to see that $F\in P_{5,4}$ is nonnegative (one can assume that $X_1^2\leq X_2^2$ by cyclic symmetry).

Suppose now $F = \sum_{i=1}^r H_i^2\in \Sigma_{5,4}$ is a sum of squares and let the coefficient of $X_{j}^2$ in $H_i$ be denoted by $H_{ij}$. Then
\begin{align*}
(X_1^2-X_2^2+X_3^2)^2\ =\ F(X_1,X_2,X_3,0,0)\ =\ \sum_{i=1}^r H_i^2(X_1,X_2,X_3,0,0).
\end{align*}
Since the quadratic form $H_i(X_1,X_2,X_3,0,0)$ vanishes on 
$X_1^2-X_2^2+X_3^2=0$, it must be a multiple of $X_1^2-X_2^2+X_3^2$ and thus,  $H_{i1} = -H_{i2} = H_{i3}$. 
By cycling the variables, we see that $H_{i2} = -H_{i3} = H_{i4}$ as well as $H_{i3} = -H_{i4} = H_{i5}$ and $H_{i4} = -H_{i5} = H_{i1}$,
so that $H_{i1} = -H_{i1} = 0$ for all $i=1,\dots,r $. This implies that the coefficient of $X_1^4$ in $F=\sum_{i=1}^r H^2_i$ is 
$\sum_{i=1}^r H_{i1}^2 = 0$, which is a contradiction.

The proof is nearly identical if we take an odd power of $F$. Suppose 
$F^{k} = \sum_{i=1}^r H_i^2\in \Sigma_{5,4k}$ is a sum of squares and let $H_{ij}$ be the coefficient of $X_{j}^{2k}$ in $H_i$.  
Then
\begin{align}\label{eq:F^k}
(X_1^2-X_2^2+X_3^2)^{2k} = F^{k}(X_1,X_2,X_3,0,0)\ =\ \sum_{i=1}^r H_i^2(X_1,X_2,X_3,0,0).
\end{align}

We show by induction that if
\begin{align}\label{eq:ind}
  (X_1^2-X_2^2+X_3^2)^{2k}\ =\ \sum_{i=1}^r Q^2_i(X_1,X_2,X_3),
\end{align}
then $Q_i(X_1,X_2,X_3) = \al_i (X_1^2-X_2^2+X_3^2)^{k}$ for some $\alpha_i\in \R$.
If $k=1$, the claim follows by the above argument.
Otherwise, since $Q_i$  vanish on $X_1^2-X_2^2+X_3^2=0$, it must be divisble by $X_1^2-X_2^2+X_3^2$ and we can write $Q_i= (X_1^2-X_2^2+X_3^2)\tilde Q_i$ for some degree $2k-2$ forms $\tilde Q_i\in \R[X_1,X_2,X_3]$. We factor out $(X_1^2-X_2^2+X_3^2)^2$ from \eqref{eq:ind} and apply the induction hypothesis to derive the claim. 

Applying this result to our alleged sum of squares \eqref{eq:F^k}, we see that each $H_i(X_1,X_2,X_3,0,0)$ is a multiple of 
$(X_1^2-X_2^2+X_3^2)^{k}$,
and so by looking at $X_{j}^{2k}$ in $H_i$, we see that  $H_{i1} = -H_{i2} = H_{i3}$. 
Again, by cycling the variables,  we obtain that $H_{i2} = -H_{i3} = H_{i4}$, $H_{i3} = -H_{i4} = H_{i5}$ and $H_{i4} = -H_{i5} = H_{i1}$,
so that $H_{i1} = -H_{i1} = 0$ for all $i=1,\dots, r$. From this we have that the coefficient of $X_1^{4k}$ in $F^k=\sum_{i=1}^r H^2_i$ is 
$\sum_{i=1}^r H_{i1}^2 = 0$, which is a contradiction.

\end{proof}

In contrast to $Q\in P_{4,4}\setminus \Sigma_{4,4}(\infty)$ and forms in $P_{3,d}\setminus \Sigma_{3,d}(\infty)$ considered in Section \ref{sec:ternary}, the Horn form $F\in P_{5,4}\setminus \Sigma_{5,4}(\infty)$ has infinitely many real zeros. Indeed, by the proof of Theorem \ref{thm:Horn} (see \eqref{eq:alt_rep}), the Horn form vanishes on the plane curve $\cV(X_1^2-X_2^2+X_3^2, X_4, X_5)$ as well as on the curves $\cV(X_2^2-X_3^2+X_4^2, X_5, X_1),\dots, \cV(X_5^2-X_1^2+X_2^2, X_3, X_4)$ obtained from the former one by cyclically permuting the variables.

\section{Convexity of $\Sigma_{n,d}(\infty)$}\label{sec:convexity}
Our goal here is to prove Theorem \ref{3.3} below, which implies that the set $\Sigma_{n,d}(\infty)$ of non-stubborn forms is convex.
We begin with a special case. 
\begin{theorem}\label{3.1}
Suppose $P_1$ and $P_2$ are nonnegative forms such that $P_1^k$ and $P_2$ are both sums of squares for some odd $k\geq 1$. Then $(P_1+P_2)^{k}$ is also a sum of squares. 
\end{theorem}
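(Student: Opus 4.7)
My plan is to reduce to the case where $P_2$ is a single square via induction on the number of terms in a sum-of-squares representation of $P_2$, then handle that base case by combining a divided-difference identity with Hilbert's theorem on bivariate forms.

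For the base case, assume $P_2 = B^2$ for a single form $B$ of degree $d/2$. I would use the polynomial identity
\[
(P_1 + B^2)^k - P_1^k \;=\; B^2 \cdot p_k(P_1, B^2),
\]
where $p_k(u,v) := \frac{(u+v)^k - u^k}{v} = \sum_{i=0}^{k-1} \binom{k}{i+1} u^{k-1-i} v^i$ is a bivariate form of degree $k-1$. Since $k$ is odd, the map $t \mapsto t^k$ is strictly increasing on $\mathbb{R}$, so $(u+v)^k - u^k$ has the same sign as $v$ for every $u$, and hence $p_k(u,v) \geq 0$ on all of $\mathbb{R}^2$. By Hilbert's 1888 theorem \cite{Hilbert1888berDD}, every nonnegative bivariate form is a sum of squares, so $p_k(u,v) = \sum_i q_i(u,v)^2$ for some $q_i \in \mathbb{R}[u,v]$. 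Substituting $u = P_1$ and $v = B^2$ (two forms of equal degree $d$) yields $p_k(P_1, B^2) = \sum_i q_i(P_1, B^2)^2$, a sum of squares in the original variables. Multiplying by the square $B^2$ preserves the SOS property, giving $B^2 p_k(P_1, B^2) = \sum_i (B \, q_i(P_1, B^2))^2$. Combined with the assumed SOS representation of $P_1^k$, this shows that $(P_1 + B^2)^k = P_1^k + B^2 p_k(P_1, B^2)$ is a sum of squares.

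For the inductive step, write $P_2 = B_1^2 + \cdots + B_r^2$ and set $\widetilde P_1 := P_1 + B_1^2 + \cdots + B_{r-1}^2$. The inductive hypothesis, applied to $P_1$ and the shorter sum of squares $B_1^2 + \cdots + B_{r-1}^2$, gives that $\widetilde P_1^{\,k}$ is a sum of squares; since $\widetilde P_1 \geq 0$, applying the base case to $\widetilde P_1$ and $B_r^2$ yields that $(P_1 + P_2)^k = (\widetilde P_1 + B_r^2)^k$ is a sum of squares, completing the induction.

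The main obstacle is to recognize the two properties of $p_k$ used above: that it is genuinely a polynomial (the numerator $(u+v)^k - u^k$ vanishes on $v=0$, so the division is legitimate) and that it is nonnegative on all of $\mathbb{R}^2$. Both facts depend crucially on $k$ being odd, so that $t \mapsto t^k$ is a strictly monotone bijection of $\mathbb{R}$. Once $p_k$ is seen to be a nonnegative binary form, Hilbert's theorem closes the gap from nonnegativity to SOS at no cost, since $p_k$ lives in only two variables.
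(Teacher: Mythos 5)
Your proof is correct and rests on the same core idea as the paper: the divided-difference identity $(P_1+P_2)^k - P_1^k = P_2\cdot F_k(P_1,P_2)$, where $F_k$ is a nonnegative binary form of degree $k-1$ (since $t\mapsto t^k$ is increasing for odd $k$) and hence a sum of (two) squares by Hilbert's theorem. The only difference is your reduction to the single-square case $P_2=B^2$ via induction on $r$; this detour is unnecessary, since a product of two sums of squares is already a sum of squares (expand $\bigl(\sum_i a_i^2\bigr)\bigl(\sum_j b_j^2\bigr)=\sum_{i,j}(a_ib_j)^2$), which is exactly what the paper uses to conclude directly that $P_2\cdot\bigl(G_k^2(P_1,P_2)+H_k^2(P_1,P_2)\bigr)$ is SOS for any SOS $P_2$.
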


\begin{proof}
The function $x^k$, $x\in \R$, is strictly monotone. Then for any real $t_1,t_2$,
\begin{align*}
F_{k}(t_1,t_2)\ :=\  \frac{(t_1+t_2)^{k} - t_1^{k}}{(t_1+t_2) - t_1} = \sum_{i=0}^{k-1}{k \choose i+1}\,  t_1^{k-1-i}t_2^i 
\end{align*}
is a quotient of two positive numbers when $t_2 > 0$ and two negative numbers when $t_2 < 0$, which implies that $F_{k}\in P_{2,k-1}$ considered as a binary form is nonnegative.
Thus, $F_{k}(t_1,t_2) = G_k^2(t_1,t_2) + H_k^2(t_1,t_2)$  for some forms $G_k,H_k$ of degree $(k-1)/2$.
The claim now follows, since 
\begin{align*}
(P_1 + P_2)^{k}\ &=\ P_1^{k} + P_2 \cdot \frac{(P_1 + P_2)^{k} - P_1^{k}}{(P_1+P_2) - P_1}\ =\ P_1^{k} + P_2 \cdot F_{k}(P_1,P_2) \\
&=\  P_1^{k} + P_2 \cdot \left(G_k^2(P_1,P_2) + H_k^2(P_1,P_2)\right).
\end{align*}
is a sum of squares, since it is a sum of a sum of squares form and a product of sum of squares forms.
\end{proof}

For the more general case, we need to look at other truncated binomials sums. 
For integers $n\geq r\geq 0$ let
\begin{align*}
  f_{n,r}(t)\ =\ \sum_{i=0}^r \binom ni\, t^i
\end{align*}
denote the truncated binomial polynomial. Extensive numerical computation suggested that if 
$n > 2r$, the polynomial $f_{n,2r}>0$ is positive. The third author
posed this as an unproved conjecture on MathOverflow, and the solution was quickly presented by Prof. Iosif Pinelis, which is included with his kind permission.
\begin{theorem}[Pinelis,  \cite{MO}]\label{MO}
For all $n > 2r$ and $t \in \rr$, $f_{n,r}(t)> 0$.
\end{theorem}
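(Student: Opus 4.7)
The plan is to analyze the auxiliary function
\[
\phi(t)\ :=\ \frac{f_{n,2r}(t)}{(1+t)^n}\qquad (t\neq -1),
\]
and pin down the sign of the (even--degree) truncation $f_{n,2r}$ everywhere by using $\phi$'s monotonicity in tandem with the sign of $(1+t)^n$. First I would establish the closed-form derivative identity
\[
\phi'(t)\ =\ -\,\frac{n\binom{n-1}{2r}\,t^{2r}}{(1+t)^{n+1}},
\]
which is a one-line calculation combining $f_{n,2r}'(t)=n\,f_{n-1,2r-1}(t)$ with the Pascal-type relation $(1+t)\,f_{n-1,2r-1}(t)=f_{n,2r}(t)-\binom{n-1}{2r}\,t^{2r}$. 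Consequently the sign of $\phi'(t)$ is the sign of $-1/(1+t)^{n+1}$, which is constant on each of $(-\infty,-1)$ and $(-1,\infty)$.

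Next I would split into three ranges. On $[0,\infty)$ positivity of $f_{n,2r}$ is immediate since all its coefficients are nonnegative and the constant term is $1$. On $(-1,0]$ the derivative identity gives $\phi'(t)\le 0$, hence $\phi(t)\ge \phi(0)=1$, and since $(1+t)^n>0$ this yields $f_{n,2r}(t)\ge (1+t)^n>0$. At the boundary $t=-1$ the classical alternating identity $\sum_{i=0}^{2r}(-1)^i\binom{n}{i}=\binom{n-1}{2r}$ gives $f_{n,2r}(-1)=\binom{n-1}{2r}$, which is strictly positive precisely when $n>2r$; this also pinpoints why the hypothesis is sharp.

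The remaining range $(-\infty,-1)$ is the delicate one. Here $\phi'(t)$ has constant sign $(-1)^n$ (since $t^{2r}>0$ while $(1+t)^{n+1}$ has sign $(-1)^{n+1}$), so $\phi$ is monotone; and because $\deg f_{n,2r}=2r<n$ one has $\phi(t)\to 0$ as $t\to -\infty$, with the approach through values of sign $(-1)^n$ (the leading coefficient of $f_{n,2r}$ is positive while $(1+t)^n$ carries sign $(-1)^n$ for very negative $t$). The monotonicity then forces $\phi$ itself to have sign $(-1)^n$ throughout the interval. But $(1+t)^n$ has exactly this sign on $(-\infty,-1)$, so $f_{n,2r}(t)=\phi(t)(1+t)^n$ is positive in both parity cases for $n$. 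The main obstacle I expect is precisely this simultaneous bookkeeping of the sign of $\phi$ against that of $(1+t)^n$ on $(-\infty,-1)$; the algebraic derivation of the $\phi'$ identity is routine, but the asymptotic and parity sign-tracking for $t<-1$ is where one must be careful.
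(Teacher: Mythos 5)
Your proof is correct and takes a genuinely different route from the one in the paper. Both arguments rest on the same pair of combinatorial identities — $f_{n,2r}'=n\,f_{n-1,2r-1}$ and the Pascal-type recursion $f_{n,2r}=f_{n-1,2r}+t\,f_{n-1,2r-1}$ — but they deploy them very differently. The paper runs an induction on $n$ with fixed $r$: the base case $n=2r+1$ gives $f_{2r+1,2r}(t)=(1+t)^{2r+1}-t^{2r+1}>0$ because $x\mapsto x^{2r+1}$ is strictly increasing, and the inductive step observes that at any critical point $t_0$ of $f_{n,2r}$ one has $f_{n-1,2r-1}(t_0)=0$ and hence $f_{n,2r}(t_0)=f_{n-1,2r}(t_0)>0$ by the inductive hypothesis, after reducing to critical points via $f_{n,2r}(t)\to+\infty$ at both ends of $\rr$. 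You instead fold both identities into the single explicit derivative
\[
\phi'(t)\ =\ -\,\frac{n\binom{n-1}{2r}\,t^{2r}}{(1+t)^{n+1}},\qquad \phi(t)=\frac{f_{n,2r}(t)}{(1+t)^n},
\]
and read off everything non-inductively from the resulting monotonicity of $\phi$ on $(-1,\infty)$ and on $(-\infty,-1)$, matched against the sign of $(1+t)^n$. The paper's argument is shorter on the page and sidesteps all sign-tracking; yours avoids induction entirely, gives a clean global picture of $f_{n,2r}/(1+t)^n$, and makes the sharpness of the hypothesis $n>2r$ immediate via $f_{n,2r}(-1)=\binom{n-1}{2r}$, at the cost of the parity bookkeeping on $(-\infty,-1)$ that you rightly flag as the delicate step.
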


\begin{proof}
We argue by induction on $n$ for fixed $r$. As before, $f_{2r+1,2r}(t) = (1+t)^{2r+1} - t^{2r+1}$ is positive
because $x^{2r+1}$ is strictly increasing. 
We use two combinatorial identities:
\begin{align}
f_{n,r}'(t)\ &=\  \sum_{i=0}^r \binom ni\  i t^{i-1}\ =\ n \sum_{i=1}^r \binom {n-1}{i-1}\  t^{i-1}\ =\ nf_{n-1,r-1}(t),\label{eq1}\\
f_{n,r}(t)\ &=\ \sum_{i=0}^r \binom {n-1}i\, t^i + \sum_{i=1}^r \binom {n-1}{i-1}\, t^i\ =\ f_{n-1,r}(t) + tf_{n-1,r-1}(t).\label{eq2}
\end{align}

Since $f_{n,r}(t) \to \infty$ as $t \to \pm \infty$, it suffices to look at values of $f_{n,r}$ at its critical points. Suppose $f_{n,r}'(t_0) = 0$. Then
by \eqref{eq1}, $f_{n-1,r-1}(t_0) = 0$ and so by \eqref{eq2},
\[
f_{n,r}(t_0)\ =\  f_{n-1,r}(t_0) + t_0f_{n-1,r-1}(t_0)\ =\ f_{n-1,r}(t_0),
\]
which is positive by the inductive hypothesis. 
\end{proof}

Observe that by this theorem there are polynomials $g_{n,2r}(t)$ and $h_{n,2r}(t)$ 
such that $f_{n,2r}(t) = (g_{n,2r}(t))^2 +  (h_{n,2r}(t))^2$, and upon homogenization that there exist binary forms $G, H$
of degree $r$ so that
\[
F_{n,2r}(t_1,t_2)\ =\ \sum_{i=0}^r \binom ni\, t_1^it_2^{2r-i}\ =\  (G_{n,2r}(t_1,t_2))^2 +  (H_{n,2r}(t_1,t_2))^2.
\]
We need this representation in the main result of this section. 
\begin{theorem}\label{3.3}
  Let $P\in \Sigma_{n,d}(k)$ and $\tilde P\in \Sigma_{n,d}(\tilde k)$, where numbers $k$ and $\tilde k$ are  both odd.
  Then $P+\tilde P\in \Sigma_{n,d}(k+\tilde k-1)$.
  In particular, $\Sigma_{n,d}(\infty)$ is a convex cone.
\end{theorem}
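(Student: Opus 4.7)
The plan is to expand $(P+\tilde P)^{k+\tilde k-1}$ by the binomial theorem and partition the expansion into two blocks, one divisible by $P^{k}$ and one divisible by $\tilde P^{\tilde k}$, so that each cofactor turns out to be a truncated binomial form already shown to be a sum of squares via Theorem \ref{MO}.

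Set $N=k+\tilde k-1$, which is odd. Splitting the binomial expansion at the index $i=k$ and using that $i\le k-1$ forces $N-i\ge \tilde k$, the identity $N-i-\tilde k = k-1-i$ yields
\[
(P+\tilde P)^{N} \;=\; \tilde P^{\tilde k}\sum_{i=0}^{k-1}\binom{N}{i}P^{i}\tilde P^{k-1-i}\;+\;P^{k}\sum_{j=0}^{\tilde k-1}\binom{N}{j+k}P^{j}\tilde P^{\tilde k-1-j}.
\]
Since $\binom{N}{j+k}=\binom{N}{\tilde k-1-j}$, reindexing the second sum by $j'=\tilde k-1-j$ rewrites it as $\sum_{j'=0}^{\tilde k-1}\binom{N}{j'}\tilde P^{j'}P^{\tilde k-1-j'}$. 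Thus the two inner sums are precisely $F_{N,k-1}(P,\tilde P)$ and $F_{N,\tilde k-1}(\tilde P,P)$, in the notation of the homogenized truncated binomial $F_{n,2r}(t_1,t_2)=\sum_{i=0}^{2r}\binom{n}{i}t_{1}^{i}t_{2}^{2r-i}$ introduced after Theorem \ref{MO}.

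Next I would invoke Theorem \ref{MO}. Because $k,\tilde k$ are odd, both $k-1$ and $\tilde k-1$ are even, and $N=k+\tilde k-1$ strictly exceeds both (since $k,\tilde k\ge 1$); consequently $F_{N,k-1}$ and $F_{N,\tilde k-1}$ are strictly positive binary forms of even degree. By the remark immediately following Theorem \ref{MO}, each admits an explicit representation $G^{2}+H^{2}$ as a sum of two squares of binary forms of half the degree. Substituting $(P,\tilde P)$ (respectively $(\tilde P,P)$) for $(t_1,t_2)$, we obtain $F_{N,k-1}(P,\tilde P)\in \Sigma_{n,d(k-1)}$ and $F_{N,\tilde k-1}(\tilde P,P)\in \Sigma_{n,d(\tilde k-1)}$. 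Multiplying these by the hypothesized sums of squares $\tilde P^{\tilde k}\in \Sigma_{n,d\tilde k}$ and $P^{k}\in \Sigma_{n,dk}$, and using that the product and sum of sums of squares are again sums of squares, I conclude that $(P+\tilde P)^{N}\in\Sigma_{n,dN}$, i.e.\ $P+\tilde P\in\Sigma_{n,d}(k+\tilde k-1)$.

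The convexity of $\Sigma_{n,d}(\infty)=\bigcup_{m\ge 0}\Sigma_{n,d}(2m+1)$ is then immediate: it is closed under nonnegative scaling since $(\lambda P)^{2m+1}=\lambda^{2m+1}P^{2m+1}$, and given $P,\tilde P\in\Sigma_{n,d}(\infty)$ with $P^{k}$ and $\tilde P^{\tilde k}$ sums of squares for some odd $k,\tilde k$, the first part places $P+\tilde P$ in $\Sigma_{n,d}(k+\tilde k-1)\subseteq\Sigma_{n,d}(\infty)$. The only substantively new ingredient beyond Theorem \ref{3.1} is the appeal to Pinelis's positivity theorem to certify sum-of-squares-ness of the cofactors when their degree exceeds two; recognising that the natural split at $i=k$ yields exactly the truncated binomial forms $F_{N,k-1}$ and $F_{N,\tilde k-1}$ (rather than some less tractable object) is the one step that requires a small leap.
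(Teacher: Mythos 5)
Your proof is correct and follows essentially the same route as the paper's: split the binomial expansion of $(P+\tilde P)^{k+\tilde k-1}$ at $i=k$, factor out $\tilde P^{\tilde k}$ and $P^{k}$, identify the cofactors (after reindexing and the symmetry $\binom{N}{j+k}=\binom{N}{\tilde k-1-j}$) as the homogenized truncated binomials $F_{N,k-1}(P,\tilde P)$ and $F_{N,\tilde k-1}(\tilde P,P)$, and certify each as a sum of two squares via Theorem \ref{MO}. The only cosmetic difference is the order of the two reindexings; the computation and the invocation of Pinelis's theorem are identical.
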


\begin{proof}
We expand $(P+\tilde P)^{k+\tilde k-1}$, and using $i' = i - k$ below, we obtain its representation as a sum of squares:
\begin{align*}
(P+\tilde P)^{k+\tilde k-1} &= \sum_{i = 0}^{k+\tilde k-1} \binom{k+\tilde k -1}i P^{i}\tilde P^{\,k+\tilde k-1-i}\\ 
&=\ \sum_{i = 0}^{k-1} \binom{k+\tilde k-1}i P^{i}\tilde P^{k+\tilde k-1-i}  +  \sum_{i = k}^{k+\tilde k-1} \binom{k+\tilde k-1}i P^{i}\tilde P^{k+\tilde k-1-i}\\
&=\ \tilde P^{\tilde k} \sum_{i = 0}^{k-1} \binom{k+\tilde k-1}i P^{i}\tilde P^{k-1-i}  +
 P^{k}  \sum_{i' = 0}^{\tilde k-1} \binom{k+\tilde k-1}{i '+k}P^{i'}\tilde P^{\tilde k-1-i'}\\
&=\ \tilde P^{\tilde k} \sum_{i = 0}^{k-1} \binom{k+\tilde k-1}i P^{i}\tilde P^{k-1-i}  +
 P^{k}  \sum_{i' = 0}^{\tilde k-1} \binom{k+\tilde k-1}{\tilde k-1-i'}P^{i'}\tilde P^{\tilde k-1-i'}\\
 &=\ \tilde P^{\tilde k}F_{k+\tilde k-1,k-1}(P,\tilde P) + P^{k}F_{k+\tilde k-1,\tilde k-1}(\tilde P,P) \\
 &=\  \tilde P^{\tilde k}(G_{k+\tilde k-1,k-1}^2(P,\tilde P) +  H_{k+\tilde k-1,k-1}^2( P,\tilde  P)) \\
 &\ \ \ \,+ P^{k}(G^2_{k+\tilde k-1,\tilde k-1}(\tilde P,P) +  H_{k+\tilde k-1,\tilde k-1}^2(\tilde P,P)).
\end{align*}

The second claim follows immediately.

\end{proof}


\section{Further remarks and questions}

In this section we make some remarks around our results and state some open questions.

Conjecture \ref{conj:general} asserts that any extremal form in $P_{n,d}$ that is not a sum of squares is stubborn.

Also, we do not know whether for a fixed odd $k$, the closed cone $\Sigma_{n,d}(k)$ is convex or not.
We expect the answer should not depend on the values of the parameters.

\smallskip

We showed in Subsection \ref{sub:Stengle} that Stengle's form \eqref{eq:Stengle} is not extremal. In fact, by Proposition \ref{prop:fact_T} the face containing $T$ in its relative interior is two-dimensional.
One can show that all forms in the relative interior of this face are stubborn. This motivates the following question: ``What is the maximal dimension of a face $\mathcal{F}\subset P_{n,d}$ whose relative interior consists of stubborn forms?"



Extremal ternary sextics $P\in \cE(P_{3,6})\cap \Delta_{3,6}$ satisfy $\delta^{\,\textrm{sos}}(P)=\delta(P)=10$ and are stubborn by Theorem \ref{thm:main}.
We saw in Subsection \ref{sub:Stengle} that $\delta^{\,\textrm{sos}}(T)=\delta(T)=9$ for Stengle's form, which is stubborn as well \cite{Sten}. Based on this and Proposition \ref{prop:fact_T} we make the following conjecture.

\begin{conjecture}
Let $P\in \Delta_{3,6}$ be a nonnegative form that is not a sum of squares, such that $\delta(P)=9$. Then $P$ is stubborn and lies in the relative interior of a $2$-dimensional face of $P_{3,6}$.
\end{conjecture}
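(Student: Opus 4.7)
The plan has two parts: first establish the $2$-dimensional face structure, then leverage it for stubbornness.

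For the face structure, I would start by noting that since $P\in\Delta_{3,6}$, Lemma \ref{lem:scroll} forces every real zero of $P$ to have multiplicity exactly $2$, so by Lemma \ref{lem:delta=delta} the local delta- and SOS-invariants agree at each real zero. Combined with the genus--degree formula, $\delta(P)=9$ means the normalization of $\mathcal{V}(P)$ is an elliptic curve. I would then prove the analog of Lemma \ref{lem:Bruce's_delta=delta}, namely $\delta^{\,\textrm{hd}}(P)=9$: the upper bound $\delta^{\,\textrm{hd}}\leq\delta$ is inherited blow-up by blow-up in the manner of that lemma, while for the lower bound I would argue that $\delta^{\,\textrm{hd}}(P)<9$ would produce a $\geq 2$-dimensional family of cubics $H$ with $P-\varepsilon H^2\in P_{3,6}$, forcing $P$ onto a face of dimension at least $3$ whose other extreme rays would have too much singularity to be compatible with $\delta(P)=9$.

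From $\delta^{\,\textrm{hd}}(P)=9$ and $\dim F_{3,3}=10$, the intersection $\bigcap_{\X\in\mathcal{V}_{\R}(P)} E(P,\X)$ is one-dimensional, spanned by some cubic $H_0$. Set $\varepsilon_{\max}:=\sup\{\varepsilon\geq 0:P-\varepsilon H_0^2\in P_{3,6}\}$ and $P':=P-\varepsilon_{\max}H_0^2$. I would argue that $P'$ picks up at least one real round zero beyond the real zeros of $P$ (each contributing at least $1$ to $\delta$), yielding $\delta(P')\geq 10$; the arithmetic genus of a sextic bounds $\delta(P')\leq 10$, so $\delta(P')=10$. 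Ruling out $P'\in\Sigma_{3,6}$ (for instance, because a sum of squares with $10$ units of delta would force a decomposition of $P$ inconsistent with $P\notin\Sigma_{3,6}$), Lemma \ref{lem:Bruce's_delta=delta} together with \cite[Thm.~2]{blekherman_hauenstein_ottem_ranestad_sturmfels_2012} gives $P'\in\mathcal{E}(P_{3,6})\cap\Delta_{3,6}$. Uniqueness of $H_0$ up to scalar then pins the minimal face containing $P$ in its relative interior to the $2$-dimensional cone generated by $H_0^2$ and $P'$.

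For stubbornness, assume for contradiction that $P^k=\sum_{i=1}^r Q_i^2\in\Sigma_{3,6k}$ for some odd $k\geq 1$. Since $P$ is irreducible over $\C$ (Lemma \ref{lem:folklore}), the argument of Theorem \ref{thm:main} furnishes coprime degree-$3k$ forms $Q_1$ and $Q=\sum a_i Q_i$ with $I(Q_1,Q)=9k^2$, while Propositions \ref{propo:mult-bound} and \ref{propo:mult-powers} give $I(Q_1,Q)\geq k^2\delta^{\,\textrm{sos}}(P)=9k^2$. Unlike the $\delta=10$ case, this yields equality, not contradiction; my plan is to exploit the equality. Equality forces $Q_1$ and $Q$ to share no intersection away from the real zeros of $P$ and forces $I_{\X^*}(Q_1,Q)=k^2\delta^{\,\textrm{sos}}_{\X^*}(P)$ at each real zero. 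Pulling back to the normalized elliptic curve $\widetilde{\mathcal{V}(P)}$, the divisors cut by $Q_1$ and $Q$ become linearly equivalent supported entirely above real singular points of $P$. Using the decomposition $P=\varepsilon_{\max}H_0^2+P'$ and the fact that $H_0$ does not vanish at the extra real zero $\X_0$ of $P'$, I would factor $H_0^k$ out of a suitable combination of the $Q_i$ to manufacture a sum-of-squares representation of $(P')^k$, contradicting Theorem \ref{thm:main} applied to $P'\in\mathcal{E}(P_{3,6})\cap\Delta_{3,6}$.

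The main obstacle is precisely the last step: upgrading the equality $I(Q_1,Q)=k^2\delta^{\,\textrm{sos}}(P)$ to an actual sum-of-squares representation of $(P')^k$. The equality controls only intersection multiplicities of two generic pencil elements, not the full family $\{Q_i\}$, so the factoring of $H_0^k$ must be extracted globally, e.g.\ via a Noether-type argument on the pencil together with the observation that at $\X_0$ the form $H_0$ is invertible locally, allowing the $Q_i$ to be reorganized into a sum of squares divisible by $H_0^{2k}$ with complementary piece a sum of squares for $(P')^k$.
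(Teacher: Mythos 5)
The statement you are trying to prove is left as an open \textbf{conjecture} in the paper; there is no proof to compare against. The paper's only evidence is Stengle's form $T$, for which stubbornness is due to Stengle \cite{Sten} and the 2-dimensional face structure is established by a bespoke linear-algebra computation in Proposition \ref{prop:fact_T}, pinning $T$ to the plane spanned by $T_0$ and $T_{\sqrt{256/27}}$. Nothing in the paper generalizes those arguments to an arbitrary $P\in\Delta_{3,6}$ with $\delta(P)=9$.

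Your proposal contains several genuine gaps. First, the lower bound $\delta^{\,\textrm{hd}}(P)\geq 9$ is asserted but not established: the analogous bound in Lemma \ref{lem:Bruce's_delta=delta} is derived from extremality of $P$, but a form with $\delta(P)=9<10$ is precisely \emph{not} extremal (Remark \ref{rem:count_proved}), and your substitute argument — that a $\geq 3$-dimensional face would have ``extreme rays with too much singularity'' — is not a constraint that follows from $\delta(P)=9$ (indeed for Stengle's $T$ one extreme ray is $T_0$, a perfect square with a one-dimensional real zero set, where local $\delta$-invariants in the sense used here do not even apply). Second, the assertion $\delta(P')=10$ for $P'=P-\varepsilon_{\max}H_0^2$ is unproven: subtracting $\varepsilon_{\max}H_0^2$ need not preserve the local singularity structure at each zero of $P$, and there is no argument that the new zero of $P'$ is unique, real, or a round (ordinary) singularity. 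Third, the inference $\delta(P')=10\Rightarrow P'\in\cE(P_{3,6})$ requires the converse of Lemma \ref{lem:Bruce's_delta=delta}, which the paper does not prove in general. Finally and most importantly, the stubbornness step is, as you acknowledge, only a sketch: the B\'ezout count gives $9k^2\geq 9k^2$, and no argument is given for how to pass from this equality to an actual sum-of-squares identity for $(P')^k$. The proposed ``reorganization'' of the $Q_i$ modulo $H_0^{2k}$ is not a valid manipulation: $P^k$ and $\varepsilon_{\max}^k H_0^{2k}+(P')^k$ are different polynomials, and neither Theorem \ref{3.1} nor Theorem \ref{3.3} provides a converse direction (they give ``$(P')^k$ SOS $\Rightarrow P^k$ SOS'', not the implication you need). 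Until these gaps are filled, the proposal does not constitute a proof.
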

It is interesting to see whether there are stubborn nonnegative forms with a ``small'' SOS-invariant (equivalently, $\delta$-invariant).
For example, are there $P\in \Delta_{3,6}(\infty)$ with $\delta(P)<9$?

The Robinson form \eqref{eq:Robinson} (as well as any $P\in \mathcal{E}(P_{3,6})\cap \Delta_{3,6}$ that generates an exposed extreme ray) has $10$ round zeros. The Motzkin form \eqref{eq:Motzkin} has four round zeros $[\pm 1:\pm 1:1]$ and two zeros $[1:0:0]$, $[0:1:0]$ at each of which the SOS-invariant is equal to $3$. This motivates the following problem.

\begin{problem}
Classify partitions $(\delta_1,\dots, \delta_s)$ of $10=\delta_1+\dots+\delta_s$ that can be realized as the set of local SOS-invariants (equivalently, delta invariants) at real zeros of some $P\in \mathcal{E}(P_{3,6})\cap \Delta_{3,6}$.
\end{problem}

\smallskip

For $ a \le 3$, let us now define
\[
M_a\ =\ X_1^4X_2^2 + X_1^2X_2^4 + X_3^6 - aX_1^2X_2^2X_3^2\ =\ M + (3-a)X_1^2X_2^2X_3^2
\]
and consider for $k\geq 0$ the set
 \[
V_{2k+1}\ =\ \{a\,:\, M_a^{2k+1} \in \Sigma_{3,6(2k+1)}\}\ =\ \{ a \,:\, M_a \in \Sigma_{3,6}(2k+1)\}
\]
of parameters $a\leq 3$ for which the $(2k+1)$-th power of $M_a$ is a sum of squares. 
\begin{theorem} There exists a non-decreasing sequence $(c_k) \subset [0,3)$ so that 
\[
V_{2k+1} = (-\infty, c_k],\quad k\geq 0.
\]
\end{theorem}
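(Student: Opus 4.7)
The plan is to verify that each $V_{2k+1}$ is a nonempty, closed, downward-closed proper subset of $(-\infty,3]$, and then read off the monotonicity of $(c_k)$ from the chain of inclusions $\Sigma_{3,6}(2k+1)\subseteq\Sigma_{3,6}(2k+3)$.

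First I would establish that $V_{2k+1}$ is downward-closed. Given $a\in V_{2k+1}$ and any $a'<a$, write
\[
M_{a'}\ =\ M_a\ +\ (a-a')X_1^2X_2^2X_3^2\ =\ M_a\ +\ \bigl(\sqrt{a-a'}\,X_1X_2X_3\bigr)^2.
\]
Thus $M_{a'}$ is the sum of $M_a$, whose $(2k+1)$-th power is a sum of squares by hypothesis, and a single square. Applying Theorem~\ref{3.1} with $P_1=M_a$ and $P_2=(a-a')(X_1X_2X_3)^2$ yields $M_{a'}^{2k+1}\in\Sigma_{3,6(2k+1)}$, so $a'\in V_{2k+1}$. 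Next, $V_{2k+1}$ is closed: it is the preimage of the closed cone $\Sigma_{3,6(2k+1)}$ under the continuous map $a\mapsto M_a^{2k+1}$, so any limit of parameters in $V_{2k+1}$ stays in $V_{2k+1}$.

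Second, I would pin down that $V_{2k+1}$ is nonempty and bounded above strictly by $3$. Nonemptiness comes from
\[
M_0\ =\ X_1^4X_2^2 + X_1^2X_2^4 + X_3^6\ =\ (X_1^2X_2)^2 + (X_1X_2^2)^2 + (X_3^3)^2\ \in\ \Sigma_{3,6},
\]
so $M_0^{2k+1}=M_0\cdot(M_0^k)^2$ is a sum of squares for every $k\geq 0$, giving $0\in V_{2k+1}$. For the upper bound, note that $M_3=M$ is the Motzkin form, which is stubborn by Corollary~\ref{cor:main}, hence $3\notin V_{2k+1}$. Moreover, for $a>3$ AM--GM applied at $(1,1,1)$ shows $M_a(1,1,1)=3-a<0$, so $M_a^{2k+1}$ is not even nonnegative and thus not a sum of squares; consequently $V_{2k+1}\subseteq(-\infty,3)$.

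Combining these four properties -- nonempty, closed, downward-closed, bounded above by a value strictly less than $3$ -- forces $V_{2k+1}=(-\infty,c_k]$ with $c_k:=\sup V_{2k+1}\in[0,3)$. Finally, since $P^{2k+3}=P^{2k+1}\cdot P^2$, one has $\Sigma_{3,6}(2k+1)\subseteq\Sigma_{3,6}(2k+3)$ and therefore $V_{2k+1}\subseteq V_{2k+3}$, which yields $c_k\le c_{k+1}$.

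The main substantive input is the strict inequality $c_k<3$, which rests entirely on the stubbornness of the Motzkin form (Theorem~\ref{thm:main}/Corollary~\ref{cor:main}); the rest is a soft convexity-type argument powered by Theorem~\ref{3.1}.
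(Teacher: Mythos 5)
Your proposal is correct and takes essentially the same approach as the paper: establish that $V_{2k+1}$ is nonempty, closed, and downward-closed, identify $c_k=\sup V_{2k+1}\in[0,3)$ via closedness of $\Sigma_{3,6(2k+1)}$ and the stubbornness of $M_3=M$ (Corollary~\ref{cor:main}), and deduce monotonicity of $(c_k)$ from $\Sigma_{3,6}(2k+1)\subseteq\Sigma_{3,6}(2k+3)$. The only cosmetic difference is that you invoke Theorem~\ref{3.1} to get downward-closedness while the paper cites Theorem~\ref{3.3}; since Theorem~\ref{3.1} is exactly the special case of Theorem~\ref{3.3} needed here (one exponent equals $1$), the two arguments coincide.
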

\begin{proof}
For each $k\geq 0$ we obviously have $(-\infty,0] \subseteq V_{2k+1}$ and $3 \notin V_{2k+1}$ by Corollary \ref{cor:main}. Let $c_k = \sup(V_{2k+1})$. Since the cone $\Sigma_{3,6(2k+1)}$ is closed, it follows that $c_k \in V_{2k+1}$ and so  $c_k \in [0,3)$.
Suppose $a = c_k - c$ is any number smaller than $c_k$ (so
$c > 0$). Then 
\[
X_1^4X_2^2 + X_1^2X_2^4 + X_3^6 - aX_1^2X_2^2X_3^2\ =\ \left(X_1^4X_2^2 + X_1^2X_2^4 + X_3^6 - c_kX_1^2X_2^2X_3^2\right)+ c X_1^2X_2^2X_3^2,
\]
is in $\Sigma_{3,6}(2k+1)$ by Theorem \ref{3.3} applied to forms $P=X_1^4X_2^2 + X_1^2X_2^4 + X_3^6 - c_kX_1^2X_2^2X_3^2\in \Sigma_{3,6}(2k+1)$ and $\tilde P = c X_1^2X_2^2X_3^2\in \Sigma_{3,6}$. 
\end{proof}

From the proof that $M$ is not a sum of squares \cite{Motzkin} one infers that $V_1 = (-\infty, 0]$, that is, $c_0=0$.
Now, the third power of $M_a$ satisfies
\begin{align*}
M_a^3\ &=\ \frac{3}{2} \left(X_1^5X_2^4 - a X_1^3X_2^4X_3^2\right)^2 + \frac{3}{2} \left(X_1^4X_2^5 - a X_1^4X_2^3X_3^2\right)^2 +\frac{3}{2} \left(X_1^4X_2^2X_3^3 - a X_1^2X_2^2X_3^5\right)^2\\ &+ \frac{3}{2} \left(X_1^2X_2^4X_3^3 - a X_1^2X_2^2X_3^5\right)^2
+\frac{3}{2} \left(X_1X_2^2X_3^6 - a X_1^3X_2^4X_3^2\right)^2 + \frac{3}{2}\left(X_1^2X_2X_3^6 - a X_1^4X_2^3X_3^2\right)^2\\
&+\frac{3}{2} \left(X_1^2X_2^4X_3^3- X_1^4X_2^2X_3^3\right)^2 + \frac{3}{2}\left(X_1^4X_2^5 - X_1^2X_2X_3^6\right)^2 + \frac{3}{2}\left(X_1^5X_2^4 - X_1X_2^2X_3^6\right)^2\\ &
+ \left(X_3^9 - 2a X_1^2X_2^2X_3^5\right)^2 + a \left(X_1 X_2 X_3^7 - 2a X_1^3X_2^3X_3^3\right)^2
+ \left(X_1^3X_2^6 - 2a X_1^3X_2^4X_3^2\right)^2\\  &+ a \left(X_1^3X_2^5X_3 - 2a X_1^3X_2^3X_3^3\right)^2
+ \left(X_1^6X_2^3 - 2a X_1^4X_2^3X_3^2\right)^2 + a \left(X_1^5X_2^3X_3 - 2a X_1^3X_2^3X_3^3\right)^2\\
&+ \left(15 - 13a^3\right)\left(X_1^3X_2^3X_3^3\right)^2
\end{align*}
and as a consequence we have that $\frac{15}{13}^{1/3} \approx 1.04886\le c_1$.
Note that we cannot apply Scheiderer's theorem from \cite{Sch} here, because
$M_c$ for $c > 0$ is not strictly positive (it has real zeros at $[1:0:0]$ and $[0:1:0]$).
Pablo Parrilo has experimentally found out \cite{email} that $c_1 \approx 2.56548$ and also $c_2 \approx 2.88905$. One interesting question is whether $\lim_k c_k = 3$.

%





\end{document}